\newtheorem{lem}{Lemma}
\let\oldequation\equation
\let\oldendequation\endequation
\renewenvironment{equation}
{\linenomathNonumbers\oldequation}
{\oldendequation\endlinenomath}
\newtheorem{thm}{Theorem}
\begin{document}
\title{A Kolmogorov High Order Deep Neural Network for High Frequency Partial Differential Equations in High Dimensions}



\author[Zhang Y Q et.~al.]{Yaqin Zhang\affil{1,2}\comma\footnotemark[2],
Ke Li\affil{3}\comma\footnotemark[2], Zhipeng Chang\affil{4}, Xuejiao Liu\affil{2}, Yunqing Huang\affil{1}\comma\footnotemark[1], and Xueshuang Xiang\affil{2}\comma\corrauth}
\footnotetext[2]{These authors contributed equally to this work.}
\address{\affilnum{1}\ School of Mathematics and Computational Science,
Xiangtan University,
Xiangtan 411105, P.R. China. \\
\affilnum{2}\ Qian Xuesen Laboratory of Space Technology,
China Academy of Space Technology, Beijing 100094, P.R. China.\\
\affilnum{3}\ Information Engineering University, Zhengzhou 450001, P.R. China.\\
\affilnum{4}\ School of Mathematics and Statistics, Wuhan University, Wuhan 430072, P.R. China.}

\emails{
{\tt huangyq@xtu.edu.cn} (Y.~Huang), {\tt xiangxueshuang2023@163.com} (X.~Xiang)}


\begin{abstract}
This paper proposes a Kolmogorov high order deep neural network (K-HOrderDNN)  for solving high-dimensional partial differential equations (PDEs), which improves the high order deep neural networks (HOrderDNNs). HOrderDNNs have been demonstrated to outperform conventional DNNs for high frequency problems by introducing a nonlinear transformation layer consisting of  $(p+1)^d$ basis functions. However, the number of basis functions grows exponentially with the dimension $d$, which results in the curse of dimensionality (CoD). Inspired by the Kolmogorov superposition theorem (KST), which expresses a multivariate function as superpositions of univariate functions and addition,  K-HOrderDNN utilizes a HOrderDNN to efficiently approximate univariate inner functions instead of directly approximating the multivariate function, reducing the number of introduced basis functions to $d(p+1)$. We theoretically demonstrate that CoD is mitigated when target functions belong to a dense subset of continuous multivariate functions. Extensive numerical experiments show that: for high-dimensional problems  ($d$=10, 20, 50) where HOrderDNNs($p>1$) are intractable, K-HOrderDNNs($p>1$) exhibit remarkable performance. Specifically, when $d=10$, K-HOrderDNN($p=7$) achieves an error of 4.40E-03, two orders of magnitude lower than that of HOrderDNN($p=1$) (see Table \ref{tab:table14}); for high frequency problems, K-HOrderDNNs($p>1$) can achieve higher accuracy with fewer parameters and faster convergence rates compared to HOrderDNNs (see Table \ref{tab:table12}). 
\end{abstract}

\ams{68T99, 35Q68, 65N99}
\keywords{Deep neural network, Kolmogorov Superposition Theorem, high-dimensional and high-frequency PDEs.}

\maketitle

\section{Introduction}
\label{Introduction}
In recent years, deep learning methods for solving partial differential equations (PDEs) have attracted widespread attention \cite{Han_Jentzen_E_2018}. A series of novel methods, such as the deep Ritz method (DRM) \cite{yu2018deep}, Physical Information Neural Networks (PINNs) \cite{raissi2019physics,qian2023physics}, the deep Galerkin method (DGM) \cite{sirignano2018dgm} and Weak Adversarial Neural Networks (WAN) \cite{zang2020weak}, have been proposed and demonstrated the vast potential of deep neural networks (DNNs) in solving various PDEs. The basic idea of these methods is reformulating a PDE problem as an optimization problem and training a DNN to approximate the solution of the PDE by minimizing the corresponding loss function. Compared to traditional mesh-dependent methods such as Finite Element Methods (FEMs), these deep learning-based numerical methods show great flexibility and potential for solving complex PDEs defined in high dimensions and irregular domains. Unfortunately, challenges still exist despite their early success, especially when dealing with high-frequency problems. As revealed by frequency principle or spectral bias \cite{xu2018understanding, xu2019frequency}, PINNs exhibit different learning behaviours among different frequencies, with low-frequency components being prioritized and high-frequency components hard to capture,  ultimately leading to difficulties in achieving stable training and accurate predictions in high-frequency problems.
\par
To address this challenge, a series of extensions to the vanilla PINN have been proposed to boost the performance of PINNs from various aspects, such as PhaseDNN \cite{cai2020phase}, MscaleDNN and its variants \cite{cai2019multi, liu2020multi, li2020dnn}, cFPCT-DNN \cite{zhang2023learning}, PIRBN \cite{bai2023physics}, etc. In particular, \cite{chang2022high} develops the High Order Deep Neural Network (HOrderDNN), which incorporates high order idea from FEMs into conventional neural networks by introducing a nonlinear transformation layer determined by high order basis functions. As demonstrated in \cite{chang2022high}, HOrderDNN($p$) can directly reproduce polynomials in $\mathcal{Q}_p (\mathbb{R}^d)$, efficiently capture the high frequency information in target functions, and obtain greater approximation capability, additional efficiency and higher accuracy in solving high frequency problems compared to PINN. Furthermore,  \cite{Chang_Li_Zou_Xiang} develops HOrderDeepDDM by combining HOrderDNN with the domain decomposition method for solving high frequency interface problems. However, with the powerful approximation capability in HOrderDNN, there are also some limitations. Specifically, the nonlinear transformation layer incorporated in HOrderDNN requires $(p+1)^d$ tensor product basis functions to reproduce any multivariate polynomial function $f(x) \in \mathcal{Q}_p (\mathbb{R}^d)$. As the dimension $d$ increases, the number of basis functions $(p+1)^d$ grows exponentially, resulting in HOrderDNN suffering the curse of dimensionality (CoD).
\par
In this paper, we continue this line of research for solving high frequency problems and propose a Kolmogorov high order deep neural network, named K-HOrderDNN, to address the issue of CoD in HOrderDNN. Drawing inspiration from the famous Kolmogorov Superposition Theorem (KST), we reconstruct the nonlinear transform layer employed in HOrderDNN to avoid the use of the tensor product basis functions, which are the primary cause of CoD. The Kolmogorov Superposition Theorem was first proposed by Kolmogorov \cite{kolmogorov1957representation} in 1957, and several improved versions have been proposed subsequently. The main idea behind KST is that any continuous multivariate function can be decomposed into linear combinations and compositions of a small number of univariate functions, which provides a framework for representing complex multivariate functions by univariate functions, and offers a new insight into function representation and approximation in high dimensions. Inspired by this remarkable insight, we turn to efficiently approximate univariate functions instead of approximating a multivariate function directly, and assemble these univariate approximations into a multivariate function following the framework of KST. Considering the superiority of \textquotedblleft high order\textquotedblright idea in handling high-frequency problems, high order univirate basis functions are introduced into the subnetwork to 
approximate inner univiriate functions in the representation of KST. We remark that only $(p+1)d$ univariate basis functions are introduced in K-HOrderDNN rather than $(p+1)^d$ tensor product basis functions as in HOrderDNN, avoiding the exponential growth. Furthermore, we theoretically examine the approximation power of K-HOrderDNN and demonstrate that CoD is avoided for a dense subset of continuous multivariate functions.
\par
Extensive numerical experiments are conducted to demonstrate the performance of the proposed K-HOrderDNN. The proposed K-HOrderDNN is compared with PINN and HOrderDNNs on high frequency function fitting problems,   high frequency Poisson equations and Helmholtz equations, especially in high dimensions. Our numerical results indicate that the proposed K-HOrderDNN has the following advantages: 
\par
1. K-HOrderDNN($p$) further enhances the advantages of HOrderDNN($p$) over PINN for addressing high frequency problems. K-HOrderDNNs($p>1$) achieve faster convergence and smaller relative errors than HOrderDNNs($p>1$), both of which are significantly better than PINN, as illustrated in Fig. \ref{fig:8} and Fig. \ref{fig:14}. Especially, K-HOrderDNN($p=9$) can attain relative errors approximately two orders of magnitude smaller than those of PINN while using only about one-third of the parameters, as shown in Table \ref{tab:table7} and Table \ref{tab:table12}. In addition, K-HOrderDNNs($p>1$) resemble HOrderDNNs($p>1$) in that they tend to learn low frequency components and high frequency components almost simultaneously when approximating functions with multiple frequencies, as shown in Fig. \ref{fig:7}. 
\par
2. K-HOrderDNN($p$) addresses the parameter explosion issue faced by HOrderDNN($p$) and outperforms PINN in high dimensions. As observed in Table \ref{tab:table6}, the amount of parameters required in HOrderDNN($p=9$) grows exponentially from $2.10E+12$ to $2.02E+52$ with the dimension $d$ increasing from $10$ to $50$, making them intractable in practical computations. In contrast, K-HOrderDNNs($p$) remain computationally feasible when $d=10, 20, 50$. Furthermore, the relative errors of K-HOrderDNN($p=9$) are reduced by two orders of magnitude with respect to PINN when $d=10$, which can be seen in Table \ref{tab:table9} for Test Problem 1 and Test Problem 3, and Table \ref{tab:table14}.
\par
The remainder of this paper is structured as follows. In the next section, we provide a brief overview of PINN and HOrderDNN, explain the reasons why HOrderDNNs suffer from CoD, review a specific version of KST, and introduce our novel K-HOrderDNN inspired by the KST. In Section 3, we analyze the approximation properties of our K-HOrderDNN and demonstrate that CoD is avoided for a certain class of functions. Section 4 presents a series of numerical examples to demonstrate the effectiveness of K-HOrderDNN. It is clearly observed that K-HOrderDNNs($p>1$) can achieve faster convergence and lower relative errors than PINN and HOrderDNNs($p$) for high frequency problems in high dimensions. The final section provides a brief summary and discussion.

\section{Methodology} 
\label{Method}
\subsection{Review of PINN and HOrderDNN}
In this section, We first review how to use the PINN to solve the boundary value problems of the following general PDE
\begin{equation}
\left\{
\begin{aligned} 
\mathcal{L}u(\boldsymbol{x} 
) &= f(\boldsymbol{x}), & \boldsymbol{x} &\in \Omega \\
\mathcal{B}u(\boldsymbol{x}) &= g(\boldsymbol{x}), & \boldsymbol{x} &\in \partial \Omega ,
\end{aligned}
\right.
\label{AGDO}
\end{equation}
where $\partial \Omega$ denotes the boundary of the domain $\Omega$, $\mathcal{L}$ is a differential operator such as $\nabla$, and $\mathcal{B}$ is a boundary operator that imposes conditions like Dirichlet or Neumann conditions. $f(\boldsymbol{x})$ and $g(\boldsymbol{x})$ refer to the source function and boundary conditions, respectively.
\par
A typical PINN employs a feed-forward neural network with $L-1$ hidden layers to approximate the exact solution, represented as:
\begin{equation}
u(\boldsymbol{x};\theta) = G_L \circ \sigma \circ G_{L-1} \circ \cdots \circ \sigma \circ G_{2} \circ \sigma  \circ G_1(\boldsymbol{x}),
\label{FNN}
\end{equation}
where $\sigma$ is an activation function (e.g., ReLU, tanh), $G_i(\boldsymbol{x}) = W_i\boldsymbol{x}+b_i$ represents the $i$-th hidden layer ($1 \le i \le L-1$) with weights $W_i$ and biases $b_i$. $\theta:= \left \{ W_i,b_i \right \}_{i=1}^L$ denotes the set of all trainable parameters.
To solve the problem (\ref{AGDO}), the PINN is trained by minimizing the following discrete physics-informed objective:
\begin{equation}
\theta^{\ast} = \arg\min_{\theta} L(\theta) = \arg\min_{\theta} \left\{ L_f(\theta) + \beta L_b(\theta) \right\},
\end{equation}
where $L_f(\theta)$ and $L_b(\theta)$, representing the discrete PDE loss and the boundary loss respectively, are defined as:
\begin{equation}
\left\{
\begin{aligned} 
L_f(\theta):=\frac{1}{N_f} \sum_{i=1}^{N_f} \left| \mathcal{L}u(\boldsymbol{x}_{f}^{i}; \theta) - f(\boldsymbol{x}_{f}^{i}) \right|^2,\\
L_b(\theta) := \frac{1}{N_b} \sum_{i=1}^{N_b} \left| \mathcal{B}u(\boldsymbol{x}_{b}^{i}; \theta) - g(\boldsymbol{x}_{b}^{i}) \right|^2,
\end{aligned}
\right.
\label{eq:eq15}
\end{equation}
and $\beta$ is the loss weight of boundary loss term. $\left \{  \boldsymbol{x}_{f}^{i} \right \}_{i=1}^{N_f} $ and $\left \{\boldsymbol{x}_{b}^{i} \right \}_{i=1}^{N_b}$ are sets of points randomly distributed in $\Omega$ and on $\partial \Omega$, respectively. 
\par
Although PINNs can efficiently solve a series of PDEs, they struggle with high frequency problems due to the 'F-Principle' or 'spectral bias'. To address this issue, \cite{chang2022high} introduces the HOrderDNN, whose architecture can be represented as: 
\begin{equation}
h_p(\boldsymbol{x};\theta) = G_L \circ \sigma \circ G_{L-1} \circ \cdots \circ \sigma \circ G_{2} \circ \sigma  \circ G_1 \circ T_p(\boldsymbol{x}),
\label{HOrderDNN}
\end{equation}
where $p$ denotes the order of HOrderDNN. As shown in Fig. \ref{fig: HOrderDNN}, the only difference to convential DNNs is the nonlinear transformation layer $T_p$, which maps the input  $x$ into a set of Lagrangian interpolation basis functions $\{  \Psi_i \}_{i=1}^{(p+1)^d}$ from the polynomial space $\mathcal{Q}(\mathbb{R}^d)$ with degree not exceeding $p$. In particular, basis functions $\{ \Psi_i \}_{i=1}^{(p+1)^d}$ are constructed by tensor products of one-dimensional functions $\{\psi_i\}_{i=1}^{p+1}$ , which can be expressed as:
\begin{equation}
\psi_j(x) = \prod_{i=1,i\ne j}^{p+1} \frac{x-x_i}{x_j-x_i},\quad j=1,2, \cdots,p+1,
\label{phi}
\end{equation}
where the interpolating nodes $x_i, i=1, 2, \cdots, p+1$ are chosen as Gauss-Lobatto-Legendre points in the calculation area.
\par
\begin{figure*}[htbp]
\centering
\includegraphics[width=1\textwidth]{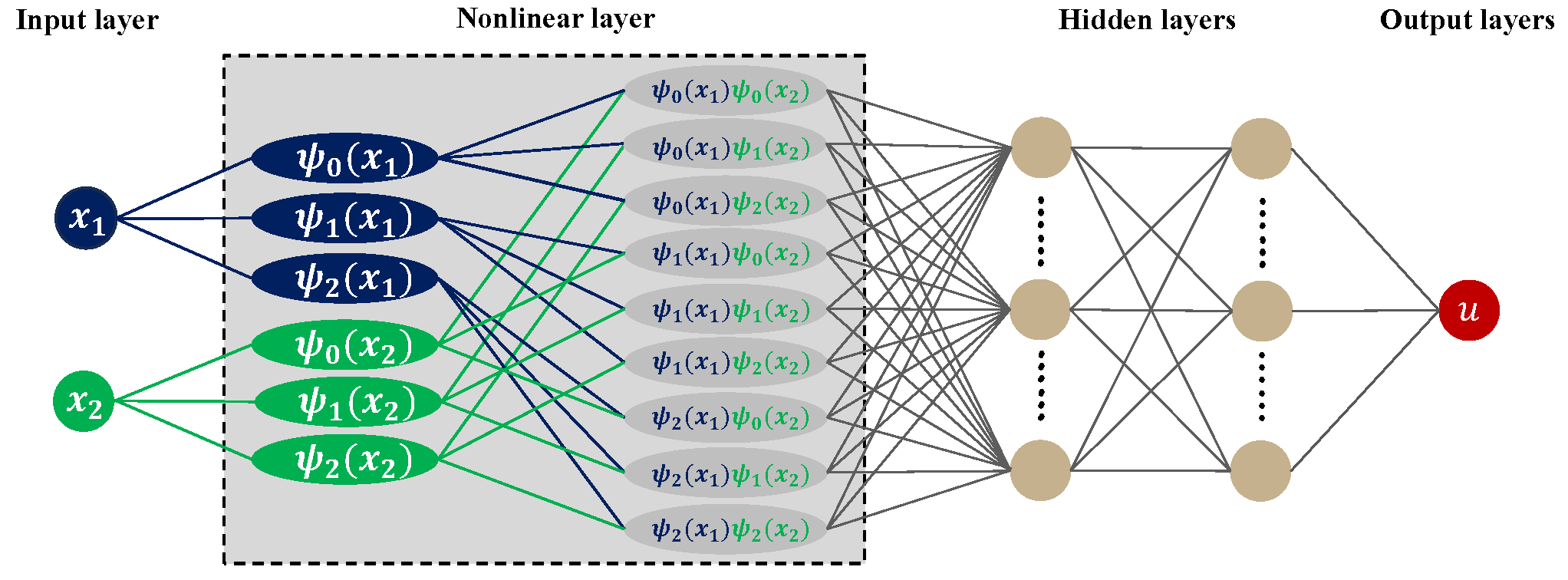}
\caption{Illustration of the architecture of HOrderDNN($p=2$) with $d=2$. The only difference to conventional DNNs is the nonlinear transformation layer followed by the input layer.}
\label{fig: HOrderDNN}
\end{figure*}
\par
As demonstrated in \cite{chang2022high,Chang_Li_Zou_Xiang}, HOrderDNNs outperform PINNs in terms of accuracy and speed for high frequency problems when $d\leq5$. However, they become impractical in higher dimensions due to the exponential growth of the required basis functions. 
To inherit the benefits of HOrderDNNs in low dimensions and overcome their limitations in higher dimensions, our K-HOrderDNNs combine the expressivity of HOrderDNNs in one-dimensional space and the Kolmogorov Superposition Theorem.

\subsection{Kolmogorov Superposition Theorem}
The Kolmogorov Superposition Theorem (KST for short) was originally proposed by Kolmogorov \cite{kolmogorov1957representation} in 1957 to answer Hilbert's 13th problem concerning the solution of algebraic equations. The original version of the KST states that for any continuous function $f:\left [ 0,1 \right ] ^{d} \to \mathbb{R}$, there exist $2d+1$ K-outer functions and $d(2d+1)$ K-inner functions to exactly represent the $d$-variate function $f$. Since then, more refined KSTs have been derived to reduce the count of univariate functions involved in the exact representation and to improve their smoothness. For example, G.G. Lorentz \cite{lorentz1962metric, lorentz1966approximation} simplified the construction to have only one K-outer function and $2d+1$ K-inner functions with $d$ irrational numbers. As pointed out in \cite{Doss_2017}, $2d+1$ is the minimum feasible number of inner functions required in the representation. Meanwhile, the continuity of the K-inner functions can be improved from the Lipschitz continuous in the original KST to be $\mathrm{Lip}_\alpha$ for any $\alpha \in (0,1)$. A more detailed introduction to the history of the KST can be found in \cite{Morris_2020}. Here, we focus on the KST version by G.G. Lorentz.
\par   
\begin{thm}[Kolmogorov Superposition Theorem] 
For any continuous function $f$ defined on $\left [ 0,1 \right ] ^{d}$, there exist irrational numbers $0< \lambda _{i} \le 1$ for $i=1,\,2,\,\cdots,\,d$, and strictly increasing $Lip \left ( \alpha  \right )$ inner functions $\phi_q$ (independent of $f$) with $\alpha = log_{10}2$ on $\left [ 0,1 \right ]$ for $q=0,\,1,\,\cdots,\,2d$, and with the presence of a continuous outer function $g(z),\,z \in \left [ 0,d \right ]$ such that the following identity holds:
\begin{equation}
f (x_{ 1},\cdots ,x _{d} ) = \sum_{q=0}^{2d} g\left ( \sum_{i=1}^{d} \lambda _{i} \phi _{q} \left ( x_{i} \right ) \right ).
\label{KST}
\end{equation}
\label{thm1}
\end{thm}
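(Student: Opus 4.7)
The plan is to establish the theorem in two essentially independent stages, following the Lorentz/Sprecher route: first construct the inner functions $\phi_q$ and the irrational weights $\lambda_i$ once and for all, independent of $f$, so that the maps $\Phi_q(x_1,\ldots,x_d) := \sum_{i=1}^d \lambda_i \phi_q(x_i)$ enjoy a combinatorial "majority covering" property of $[0,1]^d$; then recover the outer function $g$ from $f$ by a contractive iteration that absorbs successive residuals. The role of $f$ enters only at the second stage, which is why the $\phi_q$ are truly universal.

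For the inner functions I would build each $\phi_q:[0,1]\to[0,1]$ as the uniform limit of a nested sequence of strictly increasing piecewise linear functions that are "flat-like" on a refining sequence of families of intervals $\mathcal{E}_k^q$, chosen as shifted copies of a single base family at scale $\beta^k$ for some $\beta\in(0,1)$. The shifts are arranged so that every $x\in[0,1]^d$ lies in a cube of the product family $\prod_{i=1}^d\mathcal{E}_k^q$ for at least $d+1$ of the $2d+1$ indices $q$. Choosing $\beta=1/10$ and using at most two distinct plateau values per interval at each refinement yields exactly the Hölder exponent $\alpha=\log_{10}2$. The $\lambda_i$ are taken to be irrationals in $(0,1]$, rationally independent over $\mathbb{Q}$ together with $1$; rational independence plus careful endpoint bookkeeping across the refinements forces the images $\Phi_q(C)\subset[0,d]$, as $C$ runs over distinct "good" cubes at a fixed scale, to be pairwise disjoint.

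With the $(\phi_q,\lambda_i)$ in hand, I would construct $g=\sum_{k\ge 0} g_k$ inductively. Set $f_0=f$ and, at step $k$, define $g_k$ on each disjoint target interval $\Phi_q(C)$ (over all "good" cubes $C$ at scale $k$ and all majority-$q$) to be the constant $f_k(x_C)/(d+1)$ at an interior sample point $x_C\in C$, then extend $g_k$ continuously and piecewise linearly on the gaps with $\|g_k\|_\infty\le \|f_k\|_\infty/(d+1)$. Setting $f_{k+1}=f_k-\sum_q g_k\circ\Phi_q$ and exploiting the majority covering ($\ge d+1$ good $q$'s per point, so the sum accounts for $f_k(x_C)$ on each good cube) together with the uniform continuity of $f_k$ on cubes of diameter $O(\beta^k)$, one obtains a contraction $\|f_{k+1}\|_\infty \le \theta\|f_k\|_\infty$ with $\theta = \tfrac{2d}{2d+1}+o(1)<1$. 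Thus $\sum_k g_k$ converges uniformly on $[0,d]$ to a continuous $g$ and in the limit $f=\sum_{q=0}^{2d}g\circ\Phi_q$.

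The main obstacle I expect is not the outer iteration, which is a routine geometric-series/contraction argument once the combinatorics are set up, but the simultaneous enforcement on the $\phi_q$ of (i) strict monotonicity, (ii) sharp Hölder regularity with $\alpha=\log_{10}2$, and (iii) disjointness of the images $\{\Phi_q(C)\}_C$ on all good cubes at every refinement level. Preserving (iii) across refinements requires that the new plateau values inserted at scale $k+1$ not collide modulo the $\lambda_i$-combinations already used at scales $\le k$; this is where the rational independence of the $\lambda_i$ is essential, and it is handled by a pigeonhole argument on the base-$10$ digit expansions of the $\phi_q$-values. Verifying that this digit-based bookkeeping is compatible with slope growth no faster than $(1/\beta)^\alpha$ is the delicate step, and it is precisely what pins down the exponent $\alpha=\log_{10}2$.
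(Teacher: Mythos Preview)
The paper does not actually prove Theorem~\ref{thm1}; it is quoted as the Lorentz refinement of Kolmogorov's theorem and attributed to \cite{lorentz1962metric,lorentz1966approximation}, with no argument supplied. So there is no ``paper's own proof'' to compare against---the theorem functions here as background input for the construction of K-HOrderDNN and for the approximation bounds in Theorems~\ref{thm2}--\ref{thm_tanh}.

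That said, your sketch is a faithful outline of the classical Lorentz/Sprecher construction that the cited references carry out: universal inner functions built as limits of monotone piecewise-linear maps with the shifted-grid majority-covering property, rationally independent weights $\lambda_i$ to force disjoint images of good cubes under $\Phi_q$, and an outer function assembled by a geometric residual iteration. Two small points are worth tightening. First, the contraction constant: with $g_k$ normalized by $1/(d+1)$ and \emph{at least} $d+1$ good indices per point, the good-index sum can overshoot $f_k(x)$ when more than $d+1$ indices are good, so one does not get a clean $\theta=2d/(2d+1)$; the standard fix is either to normalize by $2d+1$, or to follow Lorentz and bound $\|g_k\|_\infty$ by a fixed fraction (e.g.\ $1/3$) of $\|f_k\|_\infty$ and absorb the slack into the $o(1)$ term coming from uniform continuity. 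Second, the identification of $\alpha=\log_{10}2$ is correct in spirit but your wording ``at most two distinct plateau values per interval'' is slightly off: the mechanism is that at each decimal refinement the oscillation of $\phi_q$ on a level-$k$ interval at most doubles relative to the interval length shrinking by a factor of $10$, giving H\"older exponent $\log 2/\log 10$. Neither issue is fatal; both are exactly the delicate bookkeeping you flagged in your last paragraph.
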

\par
For clarify, a function $f : [a, b] \rightarrow \mathbb{R}$ is said to belong to the class $\mathrm{Lip}_M(\alpha)$, for some constant $M \geq 0$ and $0 < \alpha \leq 1$, if 
\begin{equation*}
	|f(x) - f(y)| \leq M|x - y|^\alpha,\quad \forall\,x,\,y \in [a, b].
\end{equation*}
The class $\mathrm{Lip}(\alpha)$ consists of all functions that satisfy this condition for some $M$.
We remark that $\phi_q,\,q=0,\,1,\,\cdots,2d$ are called the K-inner functions, which are independent of the represented function $f$, and $g$ is called the K-outer function depending on $f$. We remark that achieving exact representations via KST is impractical in real-world computations, as the construction of univariate functions is an infinite process \cite{Sprecher_1997, Braun_Griebel_2009, Sprecher_Draghici_2002}. Hence, approximate versions of KST are extensively explored where K-inner and/or K-outer functions are approximated with simpler and smoother functions, such as splines function \cite{IGELNIK_PARIKH_2004,lai2022kolmogorov} and neural networks \cite{Kurkova_1991, Kurkova_1992, Schmidt-Hieber_2020, Montanelli_Yang_2019}. 

\subsection{The architecture of K-HOrderDNN}
Following a similar idea as in the approximate versions of KST, we utilize specially designed neural networks to approximate the K-inner and K-outer functions in our K-HOrderDNN.
\par
For clarity, we first rephrase the representation \eqref{KST} into a vectorized form. For any $x\in [0,1]$ and $\boldsymbol{z}=(z_0,z_1,\cdot\cdot\cdot,z_{2d})\in[0,d]^{2d+1}$, we define a univariate vector-valued function $\mathbf{\Phi}(x)$ and a multivariate vector-valued function $\mathbf{G}(\boldsymbol{z})$ as follows:
\begin{equation*}
\mathbf{\Phi}(x)=(\phi_0(x),\phi_1(x),\cdot\cdot\cdot,\phi_{2d}(x)),\quad \mathbf{G}(\boldsymbol{z})=(g(z_0),g(z_1),\cdot\cdot\cdot,g(z_{2d}))^T.
\end{equation*}
\par
Equation \eqref{KST} can be rewritten as 
\begin{equation}
f(x_1,\cdot\cdot\cdot,x_d)=V\cdot \mathbf{G}(\Lambda \cdot [\mathbf{\Phi}(x_1),\mathbf{\Phi}(x_2),\cdot\cdot\cdot,\mathbf{\Phi}(x_d)]^T).
\label{KST_vec}
\end{equation}
where $\Lambda=[\lambda_1 \mathbf{I}_{2d+1},\lambda_2 \mathbf{I}_{2d+1},\cdot\cdot\cdot, \lambda_d \mathbf{I}_{2d+1}]$ with $\mathbf{I}_{2d+1}$ being a $(2d+1)\times(2d+1)$ identity matrix, and $V=[1,1,\cdot\cdot\cdot,1]$ being an all-one row vector of length $2d+1$.
Motivated by the vectorized representation \eqref{KST_vec}, Our K-HOrderDNN is defined as 
\begin{equation}
k_p(x_1,\cdot\cdot\cdot,x_d) = G_2 \circ  \mathbf{G}_{NN}\circ\sigma\circ G_1([h_p(x_1),h_p(x_2),\cdot\cdot\cdot,h_p(x_d)]^T).
\label{K-HOrderDNN}
\end{equation}
Here, $h_{p}$ denotes a high order multi-layer subnetwork defined in \eqref{HOrderDNN} with one input neuron and $2d+1$ output neurons, aimed at imitating the univariate vector-valued inner function $\mathbf{\Phi}$. $\mathbf{G}_{NN}$ represents a fully connected subnetwork introduced to simulate the outer function $\mathbf{G}$,while $G_1,\, G_2,$ and $\sigma$ are defined in \eqref{FNN}.  
An example of the architecture of K-HOrderDNN($p=2$) when $d=2$ is shown in Fig. \ref{fig:2}. Compared to HOrderDNN depicted in Fig. \ref{fig: HOrderDNN}, K-HOrderDNN also incorporates the "high order" idea by introducing a non-linear transformation layer following the input layer, but the specific construction of the non-linear transformation layer is obviously different. Instead of inserting $(p+1)^d$ tensor-product basis functions from the multivariate polynomial space $\mathcal{Q}_p (\mathbb{R}^d)$, K-HOrderDNN($p$) only introduces $p+1$ basis functions from $\mathcal{Q}_p (\mathbb{R})$, followed by a parameter sharing subnetwork, which effectively avoids the explosion in parameter counts encountered in HOrderDNN when implemented in high dimensions.
\par
\begin{figure}[ht]
\centering
\includegraphics[width=\textwidth]{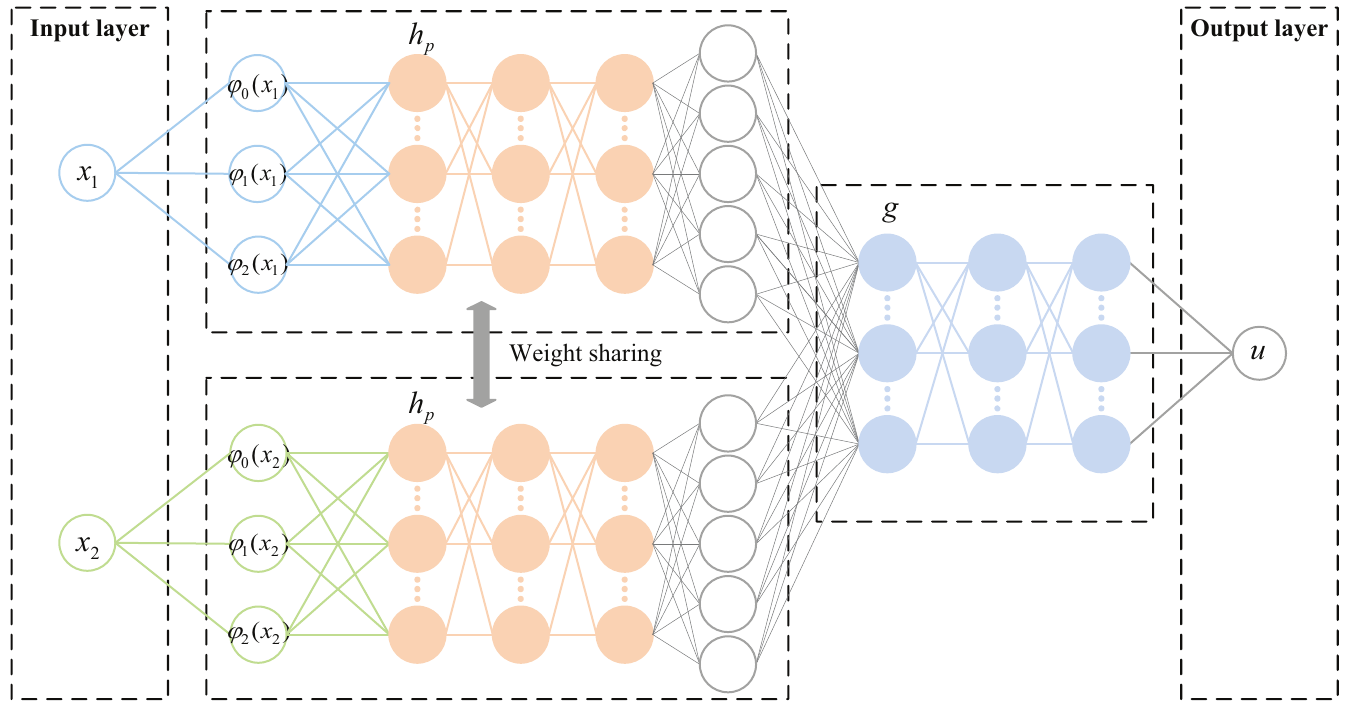}
\caption{Schematic illustration of K-HOrderDNN($p=2$) for $d=2$. Compared to HOrderDNN($p=2$) decipted in Fig. \ref{fig: HOrderDNN}, tensor product basis functions are avoided to prevent parameter explosion. Note that the subnetwork $g\,=\,\mathbf{G}_{NN}\circ\sigma\circ G_1$. The width and depth of the subnetworks $h_p$ and $g$ are provided in Table \ref{table:table1}, and specific values can be found in the detailed examples.}
\label{fig:2}
\end{figure}
\section{Theoretical analysis}
In this section, we will investigate the approximation capabilities of K-HOrderDNNs under the Rectified Linear Unit (ReLU) activation function and reveal that CoD is avoided when approximating a dense subset of continuous functions.
\par
Throughout the rest of this section, we will use $\sigma$ to denote the ReLU function. For any continuous function $f$, we utilize polynomials to approximate associated K-inner functions $\phi_q$ and a linear spline to approximate the associated K-outer function $g$. Specifically, let $L_q(x)$ be polynomials of degree $p$ which can be expressed as
\begin{equation*}
		L_q(x) := \sum_{j=1}^{p+1}a_{j,q}\psi_j(x),\quad q=0,1,\cdots,2d,
\end{equation*}
where $a_{j, q} \in \mathbb{R}$ and $\psi_j(x)$ refers to the one-dimensional basis functions defined previously in equation $\left( \ref{phi} \right)$.
Note that the K-inner functions $\phi_q$ are univariate functions mapping from $[0,1]$ to $[0,1]$. To restrict the values of $L_q(x)$, we introduce 
\begin{equation}
\widetilde{L}_{q}(x) := \min\left\{\max\left\{L_{q}(x), 0\right\}, 1\right\}\approx \phi _{q} (x),\quad q=0,1,\cdots,2d.
\label{L_q}
\end{equation}
To approximate the K-outer function $g$, we follow \cite{lai2022kolmogorov} and adopt a linear spline $S_g(z)$ which can be written in terms of a linear combination of ReLU functions as follows:
\begin{equation}
S_g(z) := \sum_{k=1}^{n}w_{k}\sigma  ( z-z_k )  \approx g(z).
\label{S_g}
\end{equation} 
\par
Upon substituting equations \eqref{L_q} and \eqref{S_g} into equation \eqref{KST}, we obtain an approximation of the original function $f(x_1, \ldots, x_d)$ as follows:
\begin{equation*}
f(x_1,x_2,\cdots,x_d)\approx \sum_{q=0}^{2d} S_g \left(\sum_{i=1}^{d} \lambda_{i} \widetilde{L}_{q}(x_i)\right).
\end{equation*}
We remark that the above approximation can be interpreted as a special class of K-HOrderDNN. Indeed, 
by representing the minimum and maximum operations as linear combinations of ReLU functions, equation \eqref{L_q} can be rewritten as 
\begin{equation}
\widetilde{L}_{q}(x) =1-\sigma(1-\sigma(\sum_{j=1}^{p+1}a_{j,q}\psi_j(x)))\approx \phi _{q} (x),\quad q=0,1,\cdots,2d,
\label{L_q_2}
\end{equation}
which means the subnetwork $h_p$ in K-HOrderDNN is chosen as a high order multi-layer neural network with one nonlinear transformation layer and three hidden layers with $2d+1$ neurons in each hidden layer.
Furthermore, equation \eqref{S_g} means the subnetwork $G_{NN}$ is chosen as a fully connected subnetwork with one hidden layer containing $n$ neurons.
For convience, we use $\mathcal{K}_{p,n}$ to denote this special class of K-HOrderDNN, i.e. 
\begin{equation*}
\mathcal{K}_{p,n}^{\sigma}:= \left \{  \sum_{q=0}^{2d} S_g (\sum_{i=1}^{d} \lambda_{i} \widetilde{L}_{q}(x_i)):
\begin{aligned}
&\text{$S_g$, $\widetilde{L}_{q}$ are defined in \eqref{S_g} \eqref{L_q_2}, respectively.} \\
&w_k,a_{j,q}\in \mathbb{R},\lambda_i\in (0,1],z_k\in[0,d].
\end{aligned}
\right\}
\label{eq:8}
\end{equation*} 
The parameters in $\mathcal{K}_{p,n}$ consist of $w_k,z_k$ with $k=1,\cdots,n$, $\lambda_i$ with $i=1,\cdots,d$ and $a_{j,q}$ with $j=1,\cdots,p+1,q=0,\cdots,2d$. Therefore, the total number of parameters equals to $2n+d+(p+1)(2d+1)$.
\par
Before presenting the main results of this section, let us revisit two auxiliary lemmas crucial to the proof.

\par
\begin{lem}[Jackson Theorem\cite{wang2019numerical}]
For any $ f(x) \in \mathrm{Lip}_M(\alpha)$ with $0<\alpha\leq 1$, there exists a polynomial $q(x) \in \mathcal{Q}_p (\mathbb{R})$ such that 
\begin{equation}
\max_{x\in [a,b]}\left| f(x) - q(x) \right| \le  \frac{K M}{p^{\alpha}},
\end{equation}
where \( K \) is a positive constant only dependent on \( a \) and \( b \).
\label{Jackson}
\end{lem}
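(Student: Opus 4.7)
The plan is to reduce to approximation on a standard interval and then invoke a kernel-based construction with the Jackson kernel. First, I would normalize the domain: given the interval $[a,b]$, apply the affine change of variables $x = \tfrac{b-a}{2}t + \tfrac{a+b}{2}$ sending $t \in [-1,1]$ to $x \in [a,b]$. Set $\tilde f(t) := f(x(t))$. A direct estimate shows $\tilde f \in \mathrm{Lip}_{\tilde M}(\alpha)$ with $\tilde M = M\bigl(\tfrac{b-a}{2}\bigr)^\alpha$, so it suffices to prove the theorem on $[-1,1]$ and then read off the constant dependence on $a,b$ at the end.

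Next, I would convert the algebraic approximation problem on $[-1,1]$ into a trigonometric approximation problem via the Chebyshev substitution $t = \cos\theta$. Define $F(\theta) := \tilde f(\cos\theta)$ on $[0,\pi]$ and extend evenly and $2\pi$-periodically. Using $|\cos\theta_1 - \cos\theta_2| \leq |\theta_1 - \theta_2|$, one gets $F \in \mathrm{Lip}_{\tilde M}(\alpha)$ as a periodic function with the same Hölder exponent. The key reason for this reduction is that an even trigonometric polynomial of degree $p$ in $\theta$ corresponds exactly to an algebraic polynomial of degree $p$ in $t = \cos\theta$ (via the Chebyshev polynomials $T_k(t) = \cos(k\theta)$), so a trigonometric approximant of $F$ will yield the desired algebraic approximant of $\tilde f$.

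The main step is to construct a trigonometric polynomial approximation of $F$ of the required accuracy. For this I would use the classical Jackson kernel $J_p(\theta)$, a non-negative even trigonometric polynomial of degree $\leq p$ normalized so that $\tfrac{1}{2\pi}\int_{-\pi}^{\pi} J_p(\theta)\,d\theta = 1$, and satisfying the second-moment bound $\tfrac{1}{2\pi}\int_{-\pi}^{\pi} |\theta|^\beta J_p(\theta)\,d\theta \leq C_\beta\, p^{-\beta}$ for $0 < \beta \leq 2$. Defining $T_p(\theta) := (F * J_p)(\theta)$ gives a trigonometric polynomial of degree $\leq p$, and writing $F(\theta) - T_p(\theta) = \tfrac{1}{2\pi}\int (F(\theta) - F(\theta - \eta))\, J_p(\eta)\, d\eta$ and using $|F(\theta) - F(\theta-\eta)| \leq \tilde M |\eta|^\alpha$ together with the second-moment estimate yields $\|F - T_p\|_\infty \leq C_\alpha \tilde M / p^\alpha$. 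Since $F$ is even, $T_p$ can be chosen to be a cosine polynomial, and pulling back through $t = \cos\theta$ and then $x = \tfrac{b-a}{2}t + \tfrac{a+b}{2}$ delivers a polynomial $q \in \mathcal{Q}_p(\mathbb{R})$ with $\max_{x\in[a,b]} |f(x) - q(x)| \leq K M / p^\alpha$, where $K = C_\alpha \bigl(\tfrac{b-a}{2}\bigr)^\alpha$.

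The main obstacle is the existence and moment estimate for the Jackson kernel, which is the nontrivial analytic ingredient; however, this is entirely classical (appearing e.g.\ in the reference \cite{wang2019numerical} cited in the lemma), and the rest of the argument is a routine change-of-variables and convolution estimate. Thus I would either cite the kernel construction directly or sketch its derivation from $\bigl(\tfrac{\sin(p\theta/2)}{\sin(\theta/2)}\bigr)^4$ with an appropriate normalization constant, and then assemble the three reductions above.
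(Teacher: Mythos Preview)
The paper does not prove this lemma at all; it is stated as the classical Jackson theorem and simply cited from \cite{wang2019numerical}, so there is no in-paper proof to compare against. Your proposal is the standard textbook argument (affine reduction to $[-1,1]$, Chebyshev substitution $t=\cos\theta$, convolution with the Jackson kernel) and is correct in outline and in its key estimates.

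One minor point worth tightening: your final constant comes out as $K = C_\alpha\bigl(\tfrac{b-a}{2}\bigr)^\alpha$, which as written depends on $\alpha$, whereas the lemma asserts $K$ depends only on $a$ and $b$. This is easy to fix: for $\alpha\in(0,1]$ the moment constant $C_\alpha$ can be bounded uniformly (e.g.\ by Jensen's inequality applied to the normalized kernel, $E[|\Theta|^\alpha]\le (E[|\Theta|])^\alpha\le \max(1,C_1)$), and likewise $\bigl(\tfrac{b-a}{2}\bigr)^\alpha\le \max\bigl(1,\tfrac{b-a}{2}\bigr)$. With this remark your $K$ depends only on $a,b$ as required.
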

\par

\begin{lem}[\cite{lai2022kolmogorov}]
Suppose that $g$ is  Lipschitz continuous over $[a, b]$ with Lipschitz constant $C_g$. For any $n\geq1$, there exists a partition 
$\bigtriangleup$ 
with $n$ interior knots such that 
\begin{equation*}
\inf_{s \in S_{1}^{0} \left ( \bigtriangleup  \right )}  \left \| g-s \right \|_{ C([a,b])} \le \frac{C_g\left ( b-a \right ) }{2(n+1)}, 
\end{equation*}
where $S_{1}^{0} \left ( \bigtriangleup  \right )$ denotes the space of all continuous linear splines over the partition $ \bigtriangleup = \{a = z_{0}< z_{1} < \cdots < z_{n} = b\}$.	
\label{Lemma2}
\end{lem}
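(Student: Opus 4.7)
The plan is to exhibit a single explicit partition and interpolant realizing the stated bound; since the infimum over $S_1^0(\bigtriangleup)$ is bounded above by the error of any particular approximant, producing one admissible candidate suffices. I would take the uniform partition $\bigtriangleup^{\ast}$ with $z_i = a + ih$ for $i = 0,1,\ldots,n+1$ and $h = (b-a)/(n+1)$, so that $\{z_1,\ldots,z_n\}$ are exactly the $n$ required interior knots, and then let $s^{\ast}$ be the continuous piecewise linear interpolant of $g$ at these nodes, which plainly lies in $S_1^0(\bigtriangleup^{\ast})$.

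The core step is a one-panel pointwise estimate. For $x \in [z_i, z_{i+1}]$ I would set $t = (x - z_i)/h \in [0,1]$, so that $s^{\ast}(x) = (1-t)g(z_i) + t g(z_{i+1})$, and write
\begin{equation*}
g(x) - s^{\ast}(x) = (1-t)\bigl(g(x) - g(z_i)\bigr) \; - \; t\bigl(g(z_{i+1}) - g(x)\bigr).
\end{equation*}
Applying the Lipschitz bound $C_g$ to each difference together with the identities $x - z_i = th$ and $z_{i+1} - x = (1-t)h$ gives
\begin{equation*}
|g(x) - s^{\ast}(x)| \;\leq\; (1-t) C_g (th) + t C_g ((1-t)h) \;=\; 2 C_g\, t(1-t)\, h.
\end{equation*}
Since $\max_{t \in [0,1]} t(1-t) = 1/4$, this reduces to $C_g h /2 = C_g(b-a)/(2(n+1))$; as the estimate is uniform across every subinterval, it bounds $\|g - s^{\ast}\|_{C([a,b])}$, and a fortiori the infimum over $S_1^0(\bigtriangleup^{\ast})$.

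I do not anticipate any serious obstacle: the construction is completely explicit once the uniform mesh is chosen, and the only nontrivial move is the convex-combination bookkeeping that converts the scalar Lipschitz constant into the symmetric factor $2t(1-t)$. The one subtlety worth flagging is that only Lipschitz regularity is assumed — no derivative bound is available — so one cannot invoke the classical $h^2/8$ interpolation estimate; this is precisely why the sharp panelwise constant here is $1/2$ rather than $1/8$, and why the decay rate is linear in $1/(n+1)$ rather than quadratic. For the downstream use in the K-HOrderDNN approximation analysis only existence of some partition achieving this linear rate is needed, so I would not pursue sharpness further.
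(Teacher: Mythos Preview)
Your argument is correct. The uniform partition together with the piecewise linear interpolant of $g$ at the nodes gives exactly the stated bound, and your convex-combination bookkeeping
\[
g(x)-s^\ast(x)=(1-t)\bigl(g(x)-g(z_i)\bigr)-t\bigl(g(z_{i+1})-g(x)\bigr)
\]
is the clean way to extract the factor $2t(1-t)\le 1/2$ from the Lipschitz hypothesis alone. You are also right to note that the $h^2/8$ interpolation estimate is unavailable here; the linear rate is all one can expect, and all that the downstream Theorem~2 needs.

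As for comparison: the paper does not actually prove this lemma. It is quoted verbatim from \cite{lai2022kolmogorov} and used as a black box in the proof of Theorem~2, where it is invoked on $[0,d]$ to produce a spline $S_g$ with $\|g-S_g\|_{C([0,d])}\le C_g d/(2(n+1))$. So there is nothing in the paper to compare your proof against beyond checking that your output matches the cited inequality, which it does.

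One small point of bookkeeping: the paper's displayed partition $\{a=z_0<z_1<\cdots<z_n=b\}$ has only $n-1$ interior knots, not the ``$n$ interior knots'' stated in the hypothesis; the denominator $2(n+1)$ in the bound makes it clear that the intended reading is $n$ interior knots and $n+1$ subintervals, exactly as you parsed it. Your indexing $z_0,\ldots,z_{n+1}$ is therefore the right normalization, even though it departs from the paper's (slightly inconsistent) notation.
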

We first state the approximation result for the class of K-Lipschitz continuous functions \cite{lai2022kolmogorov}, which consists of all Lipschitz continuous functions whose K-outer functions are also Lipschitz continuous, i.e.
\begin{equation*}
KL = \left\{ f\in C([ 0,1]^d): \text{K-outer function } g \ \text{is Lipschitz continuous} \right\}.
\end{equation*}
We remark that the new function class $KL$ is dense in $C([0,1]^d)$ \cite{lai2022kolmogorov}.
\begin{thm}
Suppose $f\in C([0,1]^d)$ belongs to $KL$. Let $C_g$ denote the Lipschitz constant of the associated K-outer function $g$ and $C_\phi$ be the common Lipschitz constant for the assciated K-inner functions $\phi_q, q=0,1,\cdots,2d$. Then we have
\begin{equation}
\inf_{s\in \mathcal{K}_{p,n}^{\sigma}}\left \| f-s \right \| _{ C([ 0,1]^d)   } \le C_{g}d(2d+1) (\frac{1}{n}+\frac{2KC_\phi}{p^{\alpha}}),
\end{equation}
with $\alpha = \log_{10}2$.
\label{thm2}
\end{thm}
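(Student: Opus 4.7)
The plan is to exhibit an explicit element $s \in \mathcal{K}_{p,n}^{\sigma}$ that mirrors the KST decomposition of $f$ term-by-term, and then bound $\|f-s\|_{C([0,1]^d)}$ by the triangle inequality using Lemma \ref{Jackson} on each K-inner function and Lemma \ref{Lemma2} on the K-outer function. First I would invoke Theorem \ref{thm1} to write
\begin{equation*}
f(x_1,\ldots,x_d) = \sum_{q=0}^{2d} g\!\left(\sum_{i=1}^d \lambda_i \phi_q(x_i)\right),
\end{equation*}
with inner functions $\phi_q \in \mathrm{Lip}(\alpha)$ (exponent $\alpha=\log_{10}2$, common constant $C_\phi$) mapping $[0,1]\to[0,1]$, and, by the $KL$ hypothesis, an outer function $g$ that is $C_g$-Lipschitz on $[0,d]$.

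Next, for each $q=0,\ldots,2d$ I would apply Lemma \ref{Jackson} on $[0,1]$ to obtain a polynomial $L_q \in \mathcal{Q}_p(\mathbb{R})$ expanded in the Lagrange basis $\{\psi_j\}_{j=1}^{p+1}$ with $\|L_q-\phi_q\|_{C([0,1])} \le KC_\phi/p^{\alpha}$, and then form the ReLU-clipped $\widetilde L_q$ as in \eqref{L_q_2}. Because $\phi_q$ already takes values in $[0,1]$, the truncation $z\mapsto\min\{\max\{z,0\},1\}$ fixes $\phi_q$ pointwise and is $1$-Lipschitz, so clipping does not worsen the Jackson error. In parallel, I would apply Lemma \ref{Lemma2} on $[a,b]=[0,d]$ to produce a continuous linear spline $S_g$ of the ReLU form \eqref{S_g} with $n$ interior knots satisfying $\|g-S_g\|_{C([0,d])} \le C_g d/(2(n+1))$.

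I would then define the candidate
\begin{equation*}
s(x) := \sum_{q=0}^{2d} S_g\!\left(\sum_{i=1}^d \lambda_i \widetilde L_q(x_i)\right),
\end{equation*}
which lies in $\mathcal{K}_{p,n}^{\sigma}$ by construction; note that $\sum_i \lambda_i \widetilde L_q(x_i)\in[0,d]$ since $\widetilde L_q\in[0,1]$ and $\lambda_i\in(0,1]$, so $S_g$ is evaluated on its domain of validity. For each fixed $q$ I would insert and subtract $g\bigl(\sum_i \lambda_i \widetilde L_q(x_i)\bigr)$, splitting the discrepancy into an inner-approximation piece controlled by the Lipschitz continuity of $g$, $C_g\sum_i \lambda_i|\phi_q(x_i)-\widetilde L_q(x_i)| \le C_g\, d\, KC_\phi/p^{\alpha}$, and an outer-approximation piece bounded by $\|g-S_g\|_{C([0,d])}$. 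Summing over $q=0,\ldots,2d$ and taking the supremum in $x$ yields
\begin{equation*}
\|f-s\|_{C([0,1]^d)} \le (2d+1)\, C_g d\left(\frac{KC_\phi}{p^{\alpha}} + \frac{1}{2(n+1)}\right),
\end{equation*}
which after the trivial estimate $1/(2(n+1))\le 1/n$ (and absorbing residual constants into the displayed factor of $2$ on the Jackson term) collapses to the claimed bound $C_g d(2d+1)\bigl(\tfrac{1}{n}+\tfrac{2KC_\phi}{p^{\alpha}}\bigr)$.

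I do not expect the argument to contain any single hard step; the only genuinely delicate point is verifying that the ReLU clipping $\widetilde L_q$ really preserves the Jackson error, which rests on the range restriction $\phi_q([0,1])\subset[0,1]$ inherent to the Lorentz form of KST and on the nonexpansiveness of the truncation map. The rest is careful bookkeeping of constants so that the Jackson rate and the spline rate aggregate into the precise form stated, together with checking that all arguments fed to $g$ stay in $[0,d]$ throughout the construction.
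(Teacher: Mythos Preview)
Your proposal is correct and follows essentially the same strategy as the paper: construct the explicit candidate $s\in\mathcal{K}_{p,n}^{\sigma}$ from Jackson polynomials for the inner functions and a linear spline for the outer function, then estimate via the triangle inequality. The one noteworthy difference is the choice of intermediate term in the splitting. The paper inserts $S_g\bigl(\sum_i\lambda_i\phi_q(x_i)\bigr)$, so its inner-approximation piece is $|S_g(\sum\lambda_i\phi_q)-S_g(\sum\lambda_i\widetilde L_q)|$, which requires the observation that the interpolatory spline $S_g$ is itself Lipschitz with constant $2C_g$; this is precisely the origin of the factor $2$ in front of $KC_\phi/p^{\alpha}$. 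You instead insert $g\bigl(\sum_i\lambda_i\widetilde L_q(x_i)\bigr)$ and use the Lipschitz constant $C_g$ of $g$ directly, which is cleaner and in fact yields the sharper intermediate bound $(2d+1)C_g d\bigl(KC_\phi/p^{\alpha}+1/(2(n+1))\bigr)$; the stated inequality then follows by the trivial majorizations you indicate. Both routes are valid and differ only in this bookkeeping detail.
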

\par
\begin{proof}
Let $\phi_q, q=0,1,\cdots,2d$ be the K-inner functions associated with $f$. By Theorem \ref{thm1}, $\phi_q \in Lip_{C_\phi}(\alpha)$ with $C_\phi$ being their common Lipschitz constant and $\alpha = \log_{10}2$. According to Lemma \ref{Jackson}, there exist  polynomials $L_q \in \mathcal{Q}_p (\mathbb{R})$ such that 
\begin{equation}
\max_{x\in [0,1]}\left|\phi_q(x)-L_q(x)\right|\le  \frac{KC_\phi}{p^{\alpha}},\quad q=0,1,\cdots ,2d.
\end{equation}
It is straightforword to confirm that $|\phi_{q}(x)-\widetilde{L}_q(x)| \le |\phi_{q}(x)-L_q(x)|$ by the definition of $\widetilde{L}_{q}(x)$.  
Hence, we have
\begin{equation}
\max_{x\in [0,1]} \left|\phi_q(x)-\widetilde{L}_q(x)\right|\le \frac{KC_\phi}{p^{\alpha}},\quad  q=0,1,\cdots ,2d.
\label{eq2}
\end{equation}
Let $g$ be the K-outer function  associated with $f$, which is Lipschtiz continuous with a Lipschitz constant $C_g$. By Lemma \ref{Lemma2}, there is a linear spline $S_g \in S_{1}^{0} \left ( \bigtriangleup  \right )$ over a partition $ \bigtriangleup  =\left \{ 0= z_{0}< z_{1} < \cdots < z_{n}=d\right \} $ such that
\begin{equation*}
\left \| g -S_{g}  \right \|_{C([0,d])} \le \frac{C_{g}d}{2(n+1)} \le \frac{C_{g}d}{2n}.
\end{equation*}
Following the equation \eqref{KST} in Theorem \ref{thm1}, we have 
\begin{equation*}
\left | f(x)-\sum_{q=0}^{2d} S_g\left ( \sum_{i=1}^{d} \lambda _{i}  \phi _{q}\left ( x_{i}  \right )  \right )  \right | \le \sum_{q=0}^{2d}\left |  g\left ( \sum_{i=1}^{d} \lambda _{i}  \phi _{q}\left ( x_{i}  \right )  \right )-S_g\left ( \sum_{i=1}^{d} \lambda _{i}  \phi _{q}\left ( x_{i}  \right )  \right )\right | \le \frac{C_{g}(2d+1)d}{2n}. 
\end{equation*}
We remark that when $g$ is Lipschitz continuous, so is its linear interpolatory spline $S_g$. Specifically, we have $\left | S_g(x)-S_g(y) \right | \le 2C_{g} \left | x-y \right |$.
Thus, by using equation \eqref{eq2}, we obtain 
\begin{equation*}
|S_g\left ( \sum_{i=1}^{d} \lambda _{i}  \phi _{q}\left ( x_{i}  \right )  \right )-S_g\left ( \sum_{i=1}^{d} \lambda _{i}  \widetilde{L}_{q}\left ( x_{i}  \right )  \right )| \le 2C_{g} \sum_{i=1}^{d}\lambda _{i}\left | \phi _{q}\left ( x_{i}  \right ) -\widetilde{L}_{q}\left ( x_{i}  \right )   \right | \le  \frac{2C_{g} d K C_\phi}{p^{\alpha}}.
\end{equation*}
Combining above two inequalities we have
\begin{equation*}
\left | f(x)-\sum_{q=0}^{2d} S_g\left ( \sum_{i=1}^{d} \lambda _{i}  \widetilde{L}_{q}\left ( x_{i}  \right )  \right )  \right | \le \frac{C_{g}(2d+1)d}{2n}+\frac{2C_{g} K C_\phi(2d+1)d}{p^{\alpha}}\le C_{g}(2d+1) d(\frac{1}{n}+\frac{2KC_\phi}{p^{\alpha}}).
\end{equation*}
This completes the proof.
\end{proof}
Theorem \ref{thm2} implies that for a dense subclass of continuous functions, only $2n+d+(p+1)(2d+1)$ parameters are required to achieve the approximation rate $O(n^{-1}+p^{-\alpha}) 
$, with the approximation constant increasing at a quadratic rate as the dimension increases. Consequently, K-HOrderDNNs break the curse of dimension for functions in this dense subclass, making them tractable even in high dimensions. 
Futhermore, as the K-outer function $g$ may not be Lipschitz continuous, we further extend Theorem \ref{thm2} to general continuous functions using the standard modulus of continuity to characterize the smoothness of $g$. For any $g\in C([a,b])$, the modulus of continuity of $g$ is defined by 
\begin{equation*}
	\omega(g, \delta) = \sup \left\{ |g(x) - g(y)| : x, y \in [a, b], |x-y| \leq \delta \right\},
\end{equation*}
for any $\delta>0$. 
Then we conclude the following theorem, with its proof deferred to Appendix A.
\par
\begin{thm}
For any continuous function $f \in C([ 0,1]^d) $, let $g$ be the K-outer function associated with $f$ and $C_\phi$ be the common Lipschitz constant for the assciated K-inner functions $\phi_q, q=0,1,\cdots,2d$. Then we have
\begin{equation}
\inf_{s\in \mathcal{K}_{p,n}^{\sigma}}\left \| f-s \right \| _{ C([ 0,1]^d)   }
\le d(2d+1)\left ( 3\omega \left ( g, \frac{1}{n} \right ) +(K C_\phi +1)\omega(g,\frac{1}{p^{\alpha}}) \right ) .
\end{equation}
\label{Theorem4}
\end{thm}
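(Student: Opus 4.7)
The plan is to mimic the proof of Theorem~\ref{thm2} almost step by step, replacing every appeal to the Lipschitz constant $C_g$ by an appeal to the modulus of continuity $\omega(g,\cdot)$, and then to use the standard subadditivity property $\omega(g,k\delta)\le k\,\omega(g,\delta)$ for positive integers $k$ (together with $\omega(g,\lambda\delta)\le(1+\lambda)\omega(g,\delta)$ for general $\lambda\ge 0$) to collapse the arguments $d/n$ and $dKC_\phi/p^\alpha$ down to $1/n$ and $1/p^\alpha$.

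First I would build the approximants exactly as in Theorem~\ref{thm2}: invoke Theorem~\ref{thm1} to obtain K-inner functions $\phi_q\in\mathrm{Lip}_{C_\phi}(\alpha)$ with $\alpha=\log_{10}2$ and a continuous K-outer function $g$, then apply Lemma~\ref{Jackson} and the truncation \eqref{L_q_2} to get $\widetilde L_q\in\mathcal{Q}_p(\mathbb{R})$ satisfying
\begin{equation*}
\max_{x\in[0,1]}|\phi_q(x)-\widetilde L_q(x)|\le \frac{KC_\phi}{p^\alpha},\qquad q=0,1,\dots,2d.
\end{equation*}
For the outer function I would use the continuous analogue of Lemma~\ref{Lemma2}: for any $g\in C([0,d])$ there is a continuous linear spline $S_g\in S_1^0(\Delta)$ on an $n$-knot partition of $[0,d]$ with $\|g-S_g\|_{C([0,d])}\le\omega(g,d/n)$. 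This is the natural replacement of the Lipschitz bound and is the only new ingredient beyond Theorem~\ref{thm2}.

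Next I would split the error into the usual two contributions via the triangle inequality,
\begin{equation*}
\Bigl|f(x)-\sum_{q=0}^{2d}S_g\!\Bigl(\sum_{i=1}^d\lambda_i\widetilde L_q(x_i)\Bigr)\Bigr|
\le \sum_{q=0}^{2d}\Bigl|g(\cdot)-S_g(\cdot)\Bigr|
+\sum_{q=0}^{2d}\Bigl|S_g\!\Bigl(\sum_i\lambda_i\phi_q(x_i)\Bigr)-S_g\!\Bigl(\sum_i\lambda_i\widetilde L_q(x_i)\Bigr)\Bigr|.
\end{equation*}
The first sum is immediately bounded by $(2d+1)\,\omega(g,d/n)$. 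The second sum is the place where the Lipschitz argument used in Theorem~\ref{thm2} breaks down; I would replace it by the sandwich
\begin{equation*}
|S_g(a)-S_g(b)|\;\le\;2\|g-S_g\|_\infty+|g(a)-g(b)|\;\le\;2\omega(g,d/n)+\omega(g,|a-b|),
\end{equation*}
applied with $|a-b|=|\sum_i\lambda_i(\phi_q(x_i)-\widetilde L_q(x_i))|\le dKC_\phi/p^\alpha$. This yields the second sum $\le (2d+1)\bigl[2\omega(g,d/n)+\omega(g,dKC_\phi/p^\alpha)\bigr]$.

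Finally, I would clean up the constants using the subadditivity of $\omega$: $\omega(g,d/n)\le d\,\omega(g,1/n)$ and $\omega(g,dKC_\phi/p^\alpha)\le d(KC_\phi+1)\,\omega(g,1/p^\alpha)$ (the latter by separating the cases $dKC_\phi\ge 1$ and $<1$ and using $\omega$ is monotone). Summing the two contributions then gives
\begin{equation*}
3(2d+1)\omega(g,d/n)+(2d+1)\omega(g,dKC_\phi/p^\alpha)\;\le\;d(2d+1)\Bigl(3\omega(g,1/n)+(KC_\phi+1)\omega(g,1/p^\alpha)\Bigr),
\end{equation*}
which is the claimed bound. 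The only subtle step is the sandwich bound on $|S_g(a)-S_g(b)|$, since without Lipschitz regularity of $g$ one cannot control the local slopes of $S_g$ directly; the trick of routing through $g$ itself is the main obstacle and is what makes the modulus-of-continuity version go through.
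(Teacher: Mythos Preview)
Your proposal is correct and follows essentially the same approach as the paper's proof: the same spline/polynomial approximants, the same triangle-inequality split, and the same sandwich trick $|S_g(a)-S_g(b)|\le 2\|g-S_g\|_\infty+|g(a)-g(b)|$ to route through $g$ and its modulus of continuity, followed by the subadditivity $\omega(g,\lambda\delta)\le(\lambda+1)\omega(g,\delta)$ to reduce the arguments to $1/n$ and $1/p^\alpha$. The paper even highlights the same step you flag as ``the only subtle step,'' so your writeup matches the intended argument closely.
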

\par
Theorem \ref{thm2} and Theorem \ref{Theorem4} are derived under the assumption of using the ReLU activation function. Next, we extend the approximation result to the Tanh activation function $ \sigma_1$ for a dense subclass of continuous functions, with the proof provided in Appendix E.	
For convience, we use $\mathcal{K}_{p,n}^{\sigma_1}$ to denote the special class of K-HOrderDNN with tanh activation function $\sigma_1$, i.e. 
\begin{equation*}
	\mathcal{K}_{p,n}^{\sigma_1} := \left\{ \sum_{q=0}^{2d} \hat{g}^N \left( \sum_{i=1}^{d} \lambda_{i} \bar{L}_{q}(x_i) \right) :
	\begin{array}{l} \bar{L}_{q} \text{ is defined in \eqref{req:1}, } a_{j,q} \in \mathbb{R}, \, \lambda_i \in (0,1], \hat{g}^N \text{ is a tanh neural}\\
		\text{network with two hidden layers, where the layer widths } \\
		\text{are at most } (N - 1) \text{ and } 6N, \text{ as presented in Lemma \ref{lemm3:tanh},}\\
	\end{array}
	\right\}
\end{equation*}
where for $ N \in \mathbb{N} $ with $ N > 5 $.
The parameters of $\mathcal{K}_{p,n}^{\sigma_1}$ consist of $a_{j,q}$  with $j=1,\cdots,p+1,q=0,\cdots,2d$, along with the parameters of the fully connected neural network $\hat{g}^N(t; \theta)$. The input dimension of $\hat{g}^N(t; \theta)$ is $2d+1$, and the output is 1. The total number of parameters of $\hat{g}^N(t; \theta)$ is given by $(2d+6N+14)(N-1)+13$. Therefore, the overall number of parameters in $\mathcal{K}_{p,n}^{\sigma_1}$ is $(2d+6N+14)(N-1)+13+(p+1)(2d+1)$.
\begin{thm}
	Suppose $f\in C([0,1]^d)$ belongs to $KL$. Let $C_g$ denote the Lipschitz constant of the associated K-outer function $g$ and $C_\phi$ be the common Lipschitz constant for the assciated K-inner functions $\phi_q, q=0,1,\cdots,2d$. Then, for $ N \in \mathbb{N} $ with $ N > 5 $, we have
	\begin{equation}
		\inf_{s\in \mathcal{K}_{p,n}^{\sigma_1}}\left \| f-s \right \| _{ C([ 0,1]^d)   } \le C_g(2d+1)d (\frac{21}{N}+\frac{3KC_\phi}{p^{\alpha}}),
	\end{equation}
	with $\alpha = \log_{10}2$.
	\label{thm_tanh}
\end{thm}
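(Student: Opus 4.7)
The plan is to mirror the proof of Theorem~\ref{thm2}, replacing the ReLU-based clipped polynomial $\widetilde{L}_q$ by its tanh counterpart $\bar{L}_q$ and the linear spline $S_g$ by the tanh network $\hat{g}^N$ supplied by Lemma~\ref{lemm3:tanh}, then combining the two new error terms exactly as before. The factorization of the target bound $C_g(2d+1)d\bigl(\tfrac{21}{N}+\tfrac{3KC_\phi}{p^\alpha}\bigr)$ already dictates which constants must come out of each step: $21C_g d/N$ should be the sup-norm approximation error of $g$ by $\hat{g}^N$ on $[0,d]$, and $3C_g$ should be a Lipschitz-type bound for $\hat{g}^N$ that lets a unit input perturbation in $\bar{L}_q$ cost at most $3C_g$ in the output.

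First, because $f\in KL$, Theorem~\ref{thm1} supplies K-inner functions $\phi_q\in\mathrm{Lip}_{C_\phi}(\alpha)$ with $\alpha=\log_{10}2$ and a Lipschitz K-outer function $g$ with constant $C_g$. I would apply Lemma~\ref{Jackson} on $[0,1]$ to obtain, for each $q=0,\dots,2d$, a polynomial $L_q\in\mathcal{Q}_p(\mathbb{R})$ with $\|\phi_q-L_q\|_{C([0,1])}\le KC_\phi/p^\alpha$. Then I would invoke the definition \eqref{req:1} of the tanh-clipped version $\bar{L}_q$ (the tanh analogue of the ReLU min/max construction used for $\widetilde{L}_q$) to transfer this Jackson bound to $\bar{L}_q$, yielding $\|\phi_q-\bar{L}_q\|_{C([0,1])}\le KC_\phi/p^\alpha$ up to a negligible additive tanh-approximation error of the identity-clipping that gets absorbed into the factor $3$ instead of $2$ appearing in the ReLU case.

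Second, I would apply Lemma~\ref{lemm3:tanh} to the Lipschitz outer function $g$ on $[0,d]$. This yields a two-hidden-layer tanh network $\hat{g}^N$ (widths $N-1$ and $6N$) satisfying both a uniform approximation bound of the form $\|g-\hat{g}^N\|_{C([0,d])}\le 21C_g d/N$ and a Lipschitz-type estimate $|\hat{g}^N(x)-\hat{g}^N(y)|\le 3C_g|x-y|$ (the exact constants are what Lemma~\ref{lemm3:tanh} is engineered to deliver, analogous to the bound $|S_g(x)-S_g(y)|\le 2C_g|x-y|$ used for the linear spline in Theorem~\ref{thm2}). With these two ingredients in hand, I would split the overall error by the triangle inequality through the intermediate quantity $\sum_{q=0}^{2d}\hat{g}^N\bigl(\sum_i\lambda_i\phi_q(x_i)\bigr)$:
\begin{equation*}
\Bigl|f(x)-\sum_{q=0}^{2d}\hat{g}^N\!\Bigl(\sum_{i=1}^d\lambda_i\bar{L}_q(x_i)\Bigr)\Bigr|
\le \sum_{q=0}^{2d}\bigl|g-\hat{g}^N\bigr|\Bigl(\textstyle\sum_i\lambda_i\phi_q(x_i)\Bigr)
+\sum_{q=0}^{2d}\Bigl|\hat{g}^N\!\bigl(\textstyle\sum_i\lambda_i\phi_q\bigr)-\hat{g}^N\!\bigl(\sum_i\lambda_i\bar{L}_q\bigr)\Bigr|.
\end{equation*}
The first sum is bounded by $(2d+1)\cdot 21 C_g d/N$, and the second, via the Lipschitz constant $3C_g$ of $\hat{g}^N$ together with $\lambda_i\le 1$ and the inner bound from Step~1, is bounded by $(2d+1)\cdot 3C_g\cdot d\cdot KC_\phi/p^\alpha$. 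Adding these produces exactly $C_g(2d+1)d\bigl(\tfrac{21}{N}+\tfrac{3KC_\phi}{p^\alpha}\bigr)$.

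The main obstacle is the second ingredient: the ReLU linear spline $S_g$ is trivially Lipschitz with an easily controlled constant, whereas a tanh network is globally smooth and one must carefully track how the construction in Lemma~\ref{lemm3:tanh} produces both the uniform error rate $O(1/N)$ and a Lipschitz constant that stays proportional to $C_g$ rather than blowing up with $N$. As long as Lemma~\ref{lemm3:tanh} (and the companion construction of $\bar{L}_q$ in \eqref{req:1}) delivers these two properties simultaneously with the stated constants, the remainder of the argument is a direct transcription of the proof of Theorem~\ref{thm2}.
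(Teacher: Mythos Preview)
Your overall skeleton (triangle inequality through $\sum_q \hat{g}^N(\sum_i\lambda_i\phi_q(x_i))$) matches the paper, but the allocation of the constants $21$ and $3$ is wrong, and your route through a Lipschitz bound on $\hat{g}^N$ is a genuine gap.

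First, Lemma~\ref{lemm3:tanh} gives $\|g-\hat{g}^N\|_{C([0,d])}\le 7dC_g/N$, not $21dC_g/N$; it says nothing about a Lipschitz constant for $\hat{g}^N$. You correctly flag this as ``the main obstacle,'' but the paper does not resolve it by proving $\hat{g}^N$ is $3C_g$-Lipschitz (which is not obvious for a tanh network and is never claimed). Instead, the paper sidesteps it entirely with the sandwich trick
\[
|\hat{g}^N(a)-\hat{g}^N(b)|\le |\hat{g}^N(a)-g(a)|+|g(a)-g(b)|+|g(b)-\hat{g}^N(b)|\le \tfrac{14dC_g}{N}+C_g|a-b|,
\]
using only the Lipschitz constant of $g$ itself. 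The $21=7+2\cdot 7$ then emerges because the $7dC_g/N$ bound is paid once in the first sum and twice more in the second.

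Second, the factor $3$ in front of $KC_\phi/p^\alpha$ does not come from the outer function at all; it comes from the inner approximation. A separate lemma (the tanh analogue of the ReLU clipping, stated just before the proof) shows $|\bar{L}_q(x)-L_q(x)|\le 2\Delta$ with $\Delta=KC_\phi/p^\alpha$, so $|\phi_q-\bar{L}_q|\le|\phi_q-L_q|+|L_q-\bar{L}_q|\le 3\Delta$. Your claim that $\|\phi_q-\bar{L}_q\|\le KC_\phi/p^\alpha$ (with the $3$ appearing only later via $\hat{g}^N$) is therefore the wrong bookkeeping. Once you use $|a-b|\le 3dKC_\phi/p^\alpha$ together with the sandwich estimate above, the second sum contributes $(2d+1)\bigl(14dC_g/N+3dC_gKC_\phi/p^\alpha\bigr)$, and adding the first sum's $(2d+1)\cdot 7dC_g/N$ gives the stated bound.
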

Theorem \ref{thm_tanh} implies that for a dense subclass of continuous functions, only $(2d+6N+14)(N-1)+13+(p+1)(2d+1)$ parameters are required to achieve the approximation rate $O(N^{-1}+p^{-\alpha}) 
$, with the approximation constant increasing at a quadratic rate as the dimension increases. Consequently, K-HOrderDNNs with tanh activation also break the curse of dimension for functions in this dense subclass, making them tractable even in high dimensions.
\section{Numerical results} 
In this section, we conduct various numerical experiments, including function fitting and PDEs solving, to evaluate the performance of K-HOrderDNNs. We incrementally escalate problem complexity, starting with a simple low dimensional, high frequency problem and gradually moving to more a complex high dimensional, high frequency problem. Throughout all experiments, we provide comprehensive comparisons against PINN and HOrderDNNs. Our results show that K-HOrderDNNs outperform both HOrderDNNs and PINN methods in all cases. 
\subsection{Experiments setup}
The hyperparameter settings in our experiments are as follows. To ensure a fair comparison, the depths of \text{PINN} and \text{HOrderDNN} match the depth of \text{K-HOrderDNN}, and the widths of \text{PINN} and \text{HOrderDNN} align with the maximum number of neurons in \text{K-HOrderDNN}. Unless otherwise specified, we use the ReLU activation function for function fitting problems. Considering the need to compute the second derivative of the neural network, and the fact that the second derivative of the ReLU function is zero almost everywhere\cite{maczuga2023influence}, which hinders learning, we opt to use the Tanh activation function to solve the equations. We initialize all model weights using the \text{Xavier} initialization and train them using the \text{Adam} optimizer for $50,000$ epochs. The initial learning rate is set to $4e-3$, and then a decay of $0.9$ is applied after every $1000$ epoch. The accuracy between the true solution $u^*$ and its numerical approximation $u(\mathbf{x};\theta)$ is measured by the relative $L_2$ error, i.e., $\text{REL} = \frac{\|u(\mathbf{x};\theta ) - u^*\|_2}{\|u^*\|_2}$. To assess this $L_2$ error, points are sampled in advance. In the $2D$ case, they comprise $m$ equidistant grid points $(x_1, x_2)$, where $m = 100 \times 100$. In the $d$-dimensional case, we extend each 2D grid point into higher dimensions by adding extra coordinates that are randomly drawn values of $(x_3, \ldots, x_d)$ within domain $\Omega$. Table \ref{table:table1} presents the necessary parameter notations for quick reference. Readers can find the values of these parameters in the experiment description below.
\par
\begin{table}[h]
\centering
\caption{Summary of model parameters}
\smallskip
\setlength{\tabcolsep}{3pt}
\resizebox{\textwidth}{!}{
\begin{tabular}{||ll||}
\hline
Notation & Stands for \\ \hline 
$d$ & Dimension of $\Omega \subset \mathbb{R}^d$ \\
$p$ & The order of HOrderDNN or K-HOrderDNN  \\
$L$ & The depth of HOrderDNN or PINN \\
$W$ &  The width of HOrderDNN or PINN  \\
$hd$ & The depth of $h_p$ used in K-HOrderDNN\\
$hw$ & The width of $h_p$ used in K-HOrderDNN\\
$gd$ & The depth of $g$ used in K-HOrderDNN \\
$gw$ & The width of $g$ used in K-HOrderDNN \\
$hd+gd+1$ & The depth of K-HOrderDNN \\
$max(hw,gw)$ &  The width of K-HOrderDNN  \\
$N_b$ & In each training step, the number of boundary points sampled on lines or surfaces.\\
$N_f$ & In each training step, the number of interior points sampled in the region $\Omega$. \\
\hline
\end{tabular}
}
\label{table:table1}
\end{table}

\subsection{Effectiveness of K-HOrderDNN}
In this section, we demonstrate the capability of K-HOrderDNN to capture high frequency components of target functions across four aspects: parameter settings, convergence process, frequency perspective, and application in high-dimensional problems.

\subsubsection{Comparison on various parameter settings}
\label{f1}
\par
\begin{figure*}[htbp]
	\centering
	\includegraphics[width=\textwidth]{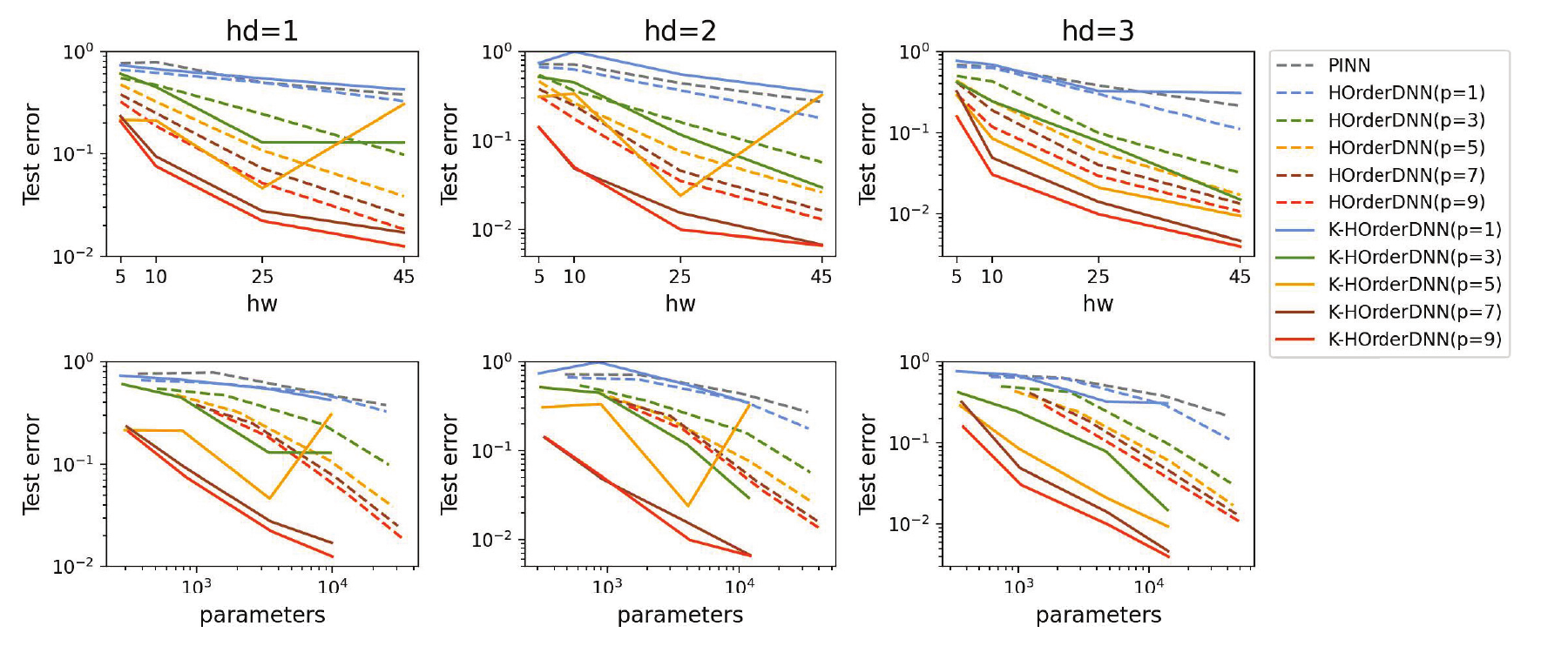}
	\caption{The change in relative $L_2$ errors with respect to hw (the first row) and the number of trainable parameters (the second row) for PINN, HOrderDNNs, and K-HOrderDNNs on problem (\ref{equation2}) when $d$=2. Each column corresponds to a different setting of $hd$, specifically $hd$ = 1, 2, and 3.}
	\label{fig:3}
\end{figure*}
We consider the following function fitting problem in domain $\Omega=[0,1] \times[0,1]$,
\begin{equation}
f(x_1, x_2) = g(x_1) \cdot g(x_2), \quad\,x_1,\,x_2\,\in [0,1],
\label{equation2}
\end{equation}
where $g(x_i) = \sum_{j=0}^{5} \sin(2^j \pi x_i)$. We select the discrete mean squared error (MSE) as the loss function, defined by 
\begin{equation}
L(\theta) = \frac{1}{N_f} \sum_{i=1}^{N_f} \left( f\left(x^{(i)}\right) - u_p\left(x^{(i)}; \theta\right) \right)^2,
\end{equation}
where $u_p(\cdot; \theta)$ is the approximated solution. In each training epoch, we resample $N_f=16,000$ random training points $\{x^{(i)}\}$.   
\par
The hyperparameter settings for K-HOrderDNNs are specified as follows: $hw$= 5, 10, 25, 45; $hd$ = 1, 2, 3; $gd$ = 2, with the $gw$ setting at twice that of $hw$, and $p=1, 3, 5, 7, 9$. For HOrderDNNs and PINN, we set $L=hd+gd+1$ and keep $W$ aligned with $gw$.
\par
The effect of $p$ and dynamic changes of relative errors with respect to $hw$ under different $hd$ conditions are shown in Fig. \ref{fig:3}. As observed, for almost all cases, if $hd$, $gd$, and $gw$ are fixed, larger $hw$ yields smaller relative errors, and for the fixed $hw$, $gd$, $gw$, relative errors decrease as $hd$ increases. We also note that K-HOrderDNNs($p>1$) outperform PINN and HOrderDNNs($p>1$). In particular, when $hd = 3$, $hw = 45$, K-HOrderDNN($p=9$) achieves a relative error of 3.97E$-$3, which is at least two orders of magnitude smaller than PINN (2.13E$-$1), and at least one order of magnitude smaller than HOrderDNN($p=9$) (1.06E$-$2), as shown in Appendix B, Table \ref{tab:B1}. Like HOrderDNN($p$), as the order $p$ increases, K-HOrderDNN($p$) also experiences a significant decrease in relative errors. This observation aligns well with Theorem \ref{thm2}. In addition, we present the changes in relative errors concerning the number of trainable parameters under different $hd$ in Fig. \ref{fig:3}. As we observe, when the number of parameters is held constant, an increase in the order $p$ leads to a reduction in relative error for both HOrderDNN and K-HOrderDNN. Furthermore, with the order $p$ fixed, K-HOrderDNN($p$) exhibits lower errors than HOrderDNN($p$). To achieve the same relative error, K-HOrderDNN($p$) requires significantly fewer parameters than HOrderDNN($p$), and similar to HOrderDNN($p$), K-HOrderDNN($p$) with larger values of $p$ requires fewer parameters.

\par
We set $hw = 45$, $hd = 1$ to further investigate the effect of increasing $gw$ and $gd$ on error reduction. 
As we can observe, in almost all cases, when $gd$ remains constant, an increase in $gw$ leads to a reduction in relative errors, which agrees well with Theorem \ref{thm2}. Simultaneously, the relative error decreases as $gd$ increases, assuming $gw$ remains unchanged. Fig. \ref{fig:4} shows the change in relative errors along with the number of trainable parameters. In line with our previous observations, with a fixed number of trainable parameters, the error decreases as the order $p$ increases. Additionally, achieving the same relative error requires fewer parameters for K-HOrderDNN($p$) with a larger $p$. Detailed numerical results are in Appendix B, Table \ref{tab:B2}.
\par
\begin{figure}[ht]
\centering
\includegraphics[width=\textwidth]{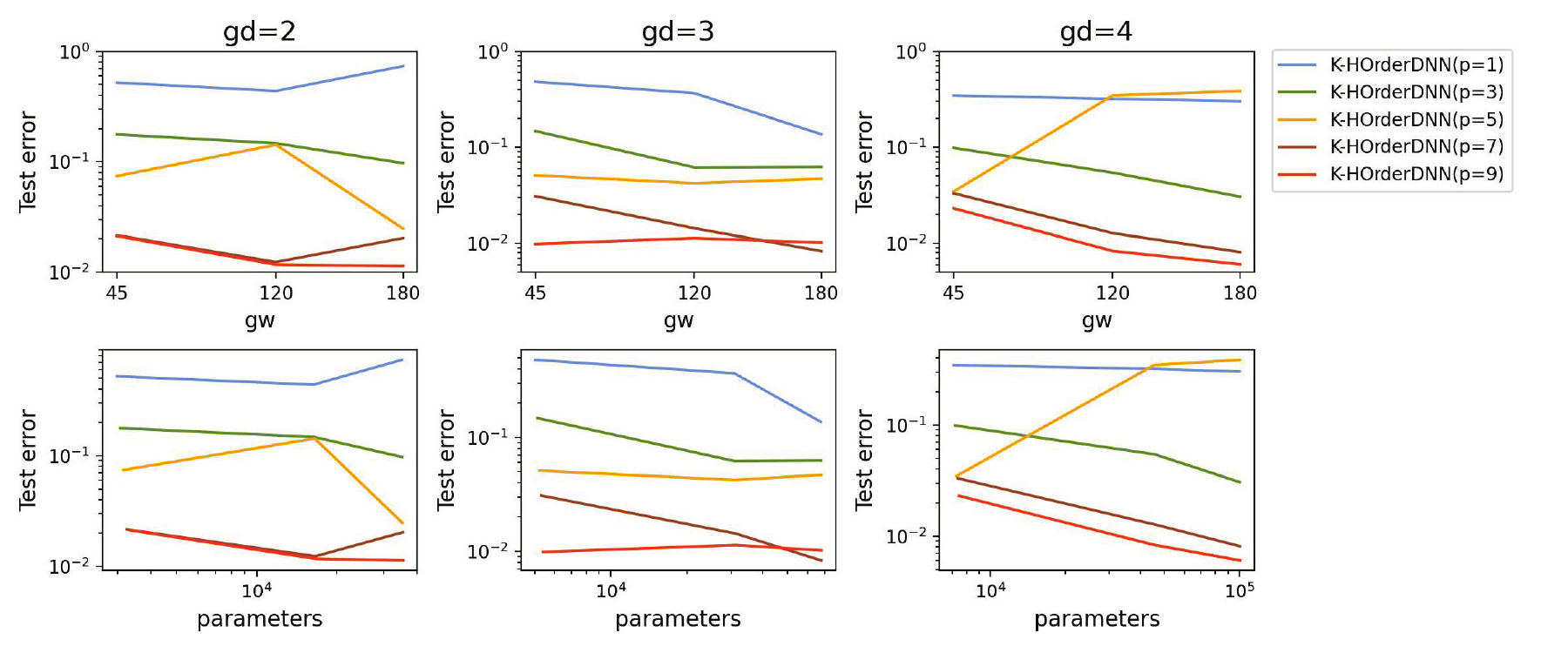}
\caption{The change in relative $L_2$ errors with respect to $gw$ (the first row) and the number of trainable parameters (the second row) for K-HOrderDNN($p$) on the problem (\ref{equation2}) when $d$=2. Each column corresponds to a different setting of $gd$, specifically $gd$ = 2, 3, and 4.}
\label{fig:4}
\end{figure}
\subsubsection{Comparison of convergence process}
\label{sec:sec1}
\par
\begin{figure}[H]
	\centering
	\includegraphics[width=\textwidth]{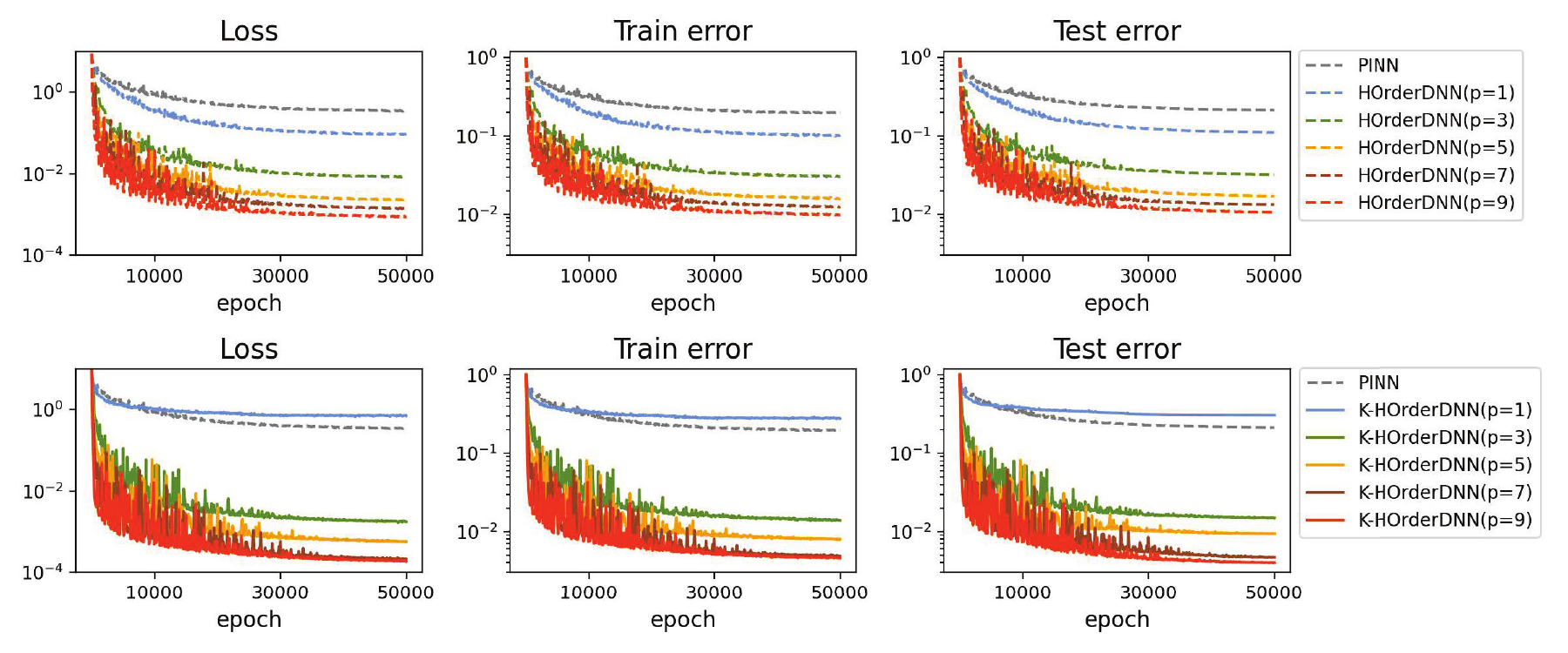}
	\caption{Convergence processes of HOrderDNNs (the first row), K-HOrderDNNs (the second row), and PINN under different order p on the problem (\ref{equation2}) when d=2. Here, $hd$=3, $hw$=45, $gd$=2, $gw$=90 for K-HOrderDNNs, and $L$=6, $W$=90 for HOrderDNNs and PINN.}
	\label{fig:5}
\end{figure} 
\par
Fig. \ref{fig:5} depicts the convergence processes of different models. This examination showcases results obtained by setting '$hd=3, hw=45, gd=2, gw=90$' for K-HOrderDNNs, and '$L=6, W=90$' for HOrderDNNs and PINN. It is evident that the proposed K-HOrderDNNs($p>1$) exhibit faster convergence compared to PINN and HOrderDNNs, achieving smaller errors and demonstrating its efficiency in fitting high-frequency components. 
Furthermore, like HOrderDNN($p$), the larger the value of $p$ in K-HOrderDNN($p$), the faster the convergence speed and the smaller the errors. Notably, similar results are observed for other cases.
\par
According to the absolute pointwise errors in Fig. \ref{fig:6}, we observe that the PINN, HOrderDNN($p=1$), and K-HOrderDNN($p=1$) exhibit higher errors, indicating their limited ability to capture local oscillations. In contrast, both HOrderDNNs and K-HOrderDNNs with the order $p>3$ demonstrate significantly lower errors, illustrating that they can capture oscillations at various scales. Additionally, for a fixed order $p$, K-HOrderDNNs($p>3$) always outperform HOrderDNNs, which indicates that the superior capability of K-HOrderDNNs in approximating the high-frequency components of the target functions.
\par
\begin{figure}[ht]
	\centering
	\includegraphics[width=\textwidth]{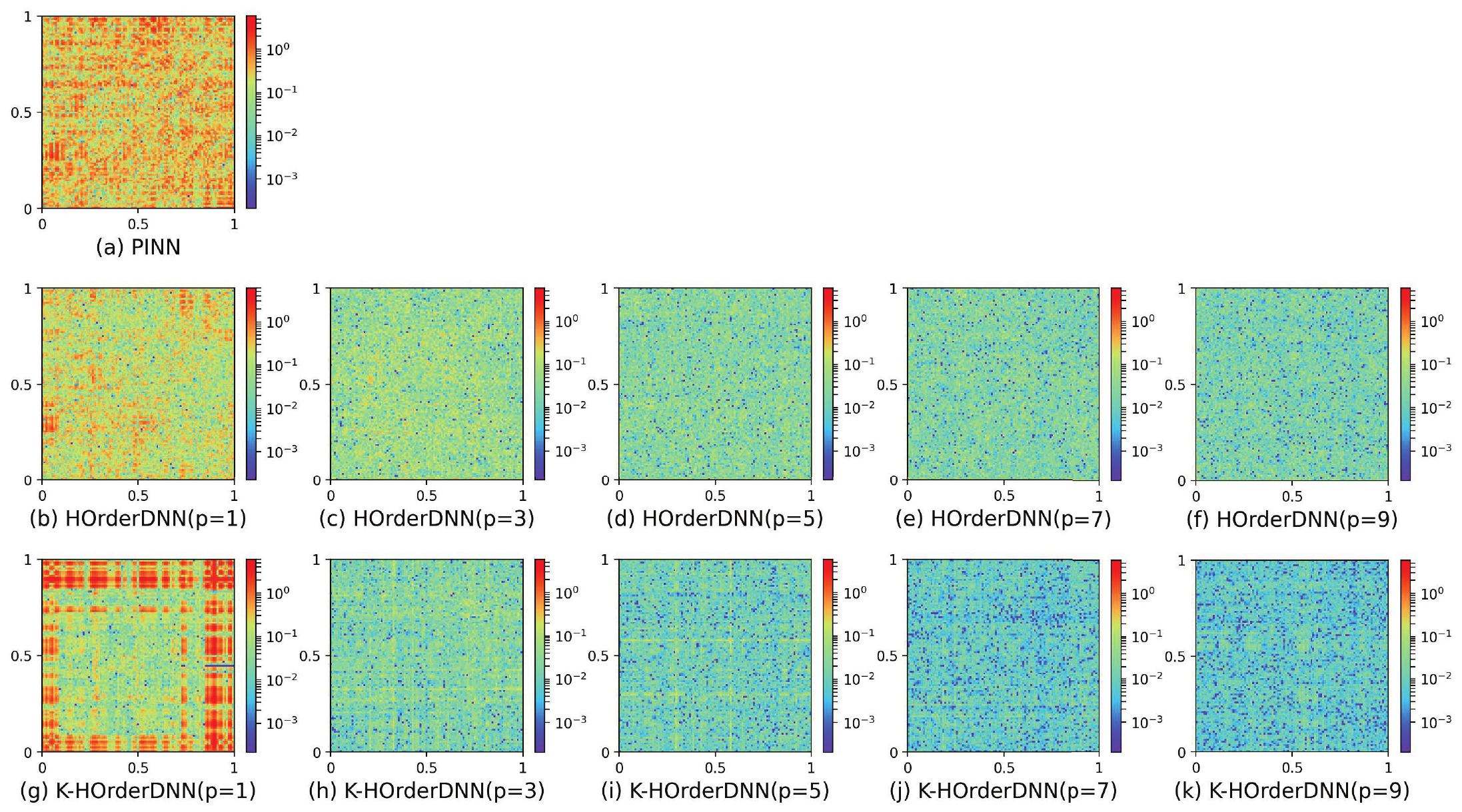}
	\caption{Absolute pointwise errors of PINN (the first row), HOrderDNNs (the second row), and K-HOrderDNNs (the third row) under different order $p$ on the problem (\ref{equation2}) when $d$=2. Here, $hd$=3, $hw$=45, $gd$=2, $gw$=90 for K-HOrderDNNs, and $L=$6, $W=$90 for HOrderDNNs and PINN.}
	\label{fig:6}
\end{figure}

\subsubsection{Comparison from frequency perspective}
\label{f3}
We further demonstrate the convergence behavior at different frequencies ($\gamma$ = 2, 4, 8, 16) during the training process of PINN, HOrderDNNs, and K-HOrderDNNs. Following Chang et al. \cite{chang2022high}, we perform the discrete Fourier transform (DFT) only along the first coordinate of the 2D testing data and then average the frequencies along the second coordinate. We compute the DFT of the target and approximating functions as
\begin{equation*}
\mathcal{F}[f] (\gamma)= \frac{1}{n} \sum_{j=0}^{n-1} f(x^{(j)}) \exp(-2 \pi i x^{(j)} \gamma),
\end{equation*}
and 
\begin{equation*}
	\mathcal{F}[u_p] (\gamma)= \frac{1}{n} \sum_{j=0}^{n-1} u_p(x^{(j)}; \theta) \exp(-2 \pi i x^{(j)} \gamma),
\end{equation*}
where $\gamma$ denotes frequency.
\par
\begin{figure}[ht]
	\centering
	\includegraphics[width=\textwidth]{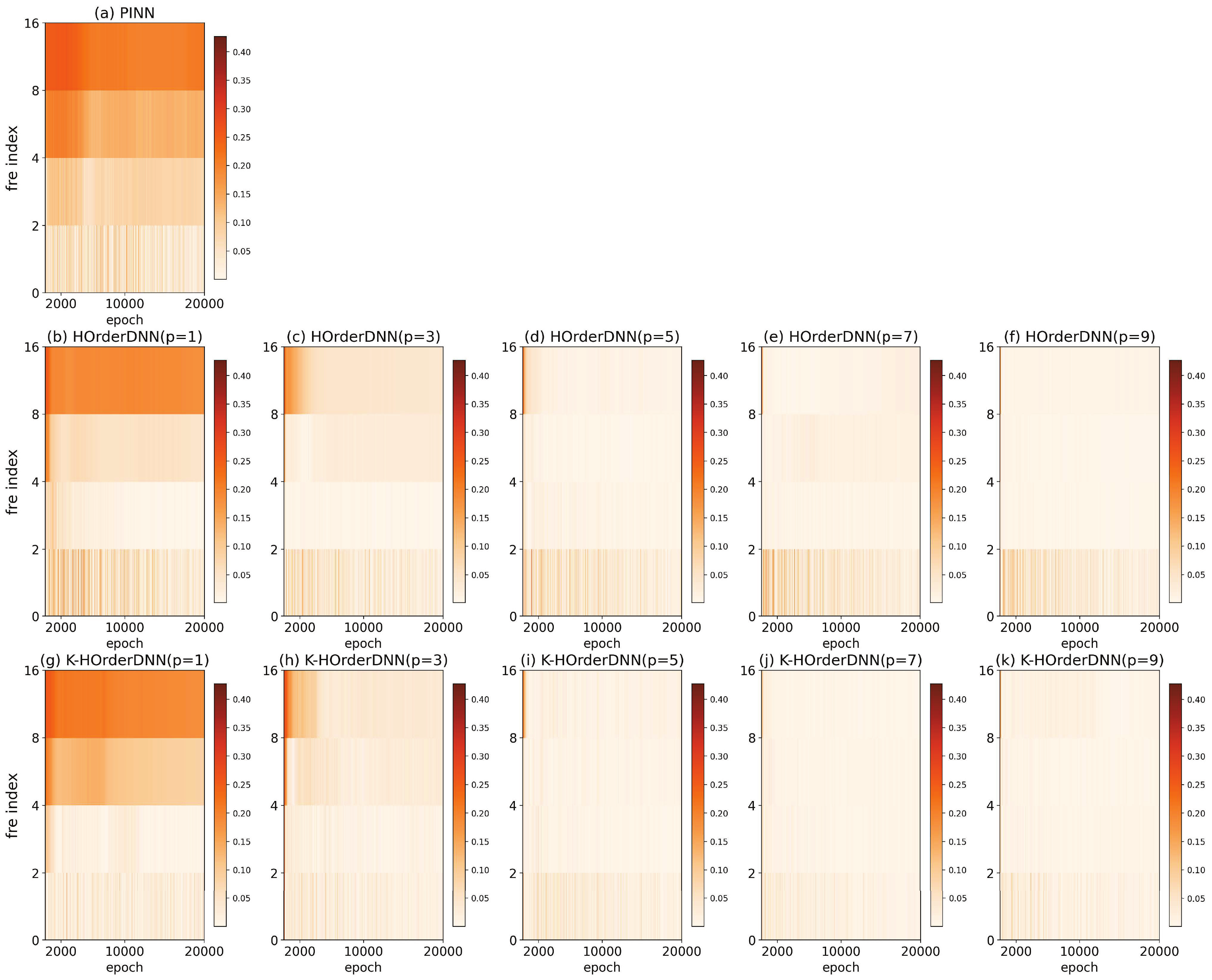}
	\caption{Frequency analysis at different epochs for PINN (the first row), HOrderDNNs (the second row), and K-HOrderDNNs (the third row) with $p$=1, 3, 5, 7, 9 on the problem(\ref{equation2}) when $d$=2. Here, $hd$=2, $hw$=5, $gd$=2, $gw$=10 for K-HOrderDNNs, and $L$=5, $W$=10 for HOrderDNNs and PINN.}
	\label{fig:7}
\end{figure}
\par
Fig. \ref{fig:7} illustrates the differences between $\mathcal{F}[f](\gamma)$ and $\mathcal{F}[u_p](\gamma)$ at the different frequencies ($\gamma$ = 2, 4, 8, 16) over epochs 2000, 10,000, and 20,000. It is evident that PINN, HOrderDNN($p=1$), and K-HOrderDNN($p=1$) rapidly learn low-frequency components while exhibiting slow learning speed for high-frequency components. Conversely, both HOrderDNNs($p > 3\allowbreak$) and K-HOrderDNNs($p>3$) effectively accelerate the learning of different frequency components almost simultaneously, with this effect significantly increasing as $p$ rises. Notably, K-HOrderDNNs with $p>3$ accelerate learning speed more effectively than HOrderNNs with $p>3$. Specifically, the K-HOrderDNN($p=9$) reduces the error at the different frequencies to below 0.05.
\subsubsection{Comparison on high-dimensional problems}
\label{f4}
We consider the following function fitting problem in domain $\Omega = [0,1]^d \subset \mathbb{R}^d$
\begin{equation}
\begin{aligned}
f(x_1, x_2, \ldots, x_d) = \sum_{i=1}^{d-2} g(x_i) \cdot g(x_{i+1}) \cdot g(x_{i+2}), \quad x_i \in [0, 1],
\end{aligned}
\label{equation3}
\end{equation}
where $g(x_i)$ is defined in Equation \eqref{equation2}.

We study the effectiveness of K-HOrderDNN($p$) with the dimension d = 10, 20, and 50. Table \ref{tab:table5} summarises the parameter settings for different methods. For simplicity, in each training epoch, we resample $N_f = 35,000$ random training points, and the number of $N_f$ does not increase with dimension.

Table \ref{tab:table6} lists the relative $L_2$ errors and the number of parameters of PINN, HOrderDNNs, and K-HOrderDNNs with the dimensions $d = 10, 20,$ and $50$.
We observe that the number of parameters of HOrderDNN surges with increases in either $d$ or $p$, rendering it impractical for high-dimensional problems, particularly for $d = 20$ or $d = 50$. In contrast, K-HOrderDNNs achieve relative errors significantly lower than PINN across all dimensions, with their parameter count being either less than or slightly exceeding that of PINN. Additionally, as $p$ increases, K-HOrderDNN($p$) yields increasingly accurate numerical solutions. We observe that for $d$=50, the error of K-HOrderDNN($p=9$) is only reduced by 5.00E$-$1 compared to PINN, which is speculated to result from the insufficient sample size of 35,000. 

\begin{table}[ht]
\centering
\caption{Parameter settings of different methods on the problem (\ref{equation3}) with the dimension $d$ = 10, 20, and 50. Here, $L = hd+gd+1$, $W = max(hw, gw)$.}
\smallskip
\setlength{\tabcolsep}{3pt}
\begin{tabular}{ccccccccccc} 
\toprule
\multirow{2}[4]{*}{$d$} & \multicolumn{2}{c}{PINN} & \multicolumn{4}{c}{HOrderDNN} & \multicolumn{4}{c}{K-HOrderDNN} \\ \cmidrule{2-3} \cmidrule{4-5} \cmidrule{6-11}
& $W$  & $L$  & & & $W$  & $L$  & $hw$  & $hd$  & $gw$  & $gd$ \\ \midrule
10  & 210   & 4 & &  & 210   & 4    & 210  & 1   & 210 & 2 \\ 
20  & 205   & 4 & & & 205   & 4    & 205  & 1   & 205 & 2 \\  
50  & 202   & 4 &  & & 202   & 4    & 202  & 1   & 202 & 2 \\  
\bottomrule
\end{tabular}%
\label{tab:table5}%
\end{table}%

\begin{table}[ht]
	\centering
	\setlength{\tabcolsep}{3pt}
	\caption{The relative $L_2$ errors and the number of parameters of different methods on problem (\ref{equation3}) with the dimension $d$ = 10, 20, and 50.}
	\smallskip
	\begin{tabular}{cccccccc} 
		\toprule
		\multirow{2}{*}{Method} & \multirow{2}{*}{$p$} & \multicolumn{2}{c}{d=10} & \multicolumn{2}{c}{d=20} & \multicolumn{2}{c}{d=50}  \\
		\cmidrule{3-8}         
		&       & Params & RLE & Params &  RLE & Params &  RLE  \\ \midrule
		
		\multicolumn{2}{c}{PINN} \textbackslash{} & 1.3545E+05 & \textbf{5.76E-01} & 1.3120E+05 & \textbf{7.65E-01} & 1.3352E+05 & \textbf{8.51E-01}  \\
		\midrule
		\multirow{5}[3]{*}{HOrderDNN} & 1     & 3.4839E+05 & \textbf{9.92E-01} & 2.1509E+08 & \textbackslash{} & 2.2743E+17 & \textbackslash{}  \\
		& 3     & 2.2033E+08 & \textbackslash{} & 2.2540E+14 & \textbackslash{} & 2.5607E+32 & \textbackslash{}  \\
		& 5     & 1.2698E+10 & \textbackslash{} & 7.4951E+17 & \textbackslash{} & 1.6327E+41 & \textbackslash{}  \\
		& 7     & 2.2549E+11 & \textbackslash{} & 2.3635E+20 & \textbackslash{} & 2.8830E+47 & \textbackslash{}  \\
		& 9     & 2.1000E+12 & \textbackslash{} & 2.0500E+22 & \textbackslash{} & 2.0200E+52 & \textbackslash{}  \\
		\midrule
		\multirow{5}[3]{*}{K-HOrderDNN} & 1     & 9.3892E+04 & 4.31E-01 & 2.1980E+05 & 5.58E-01 & 1.0826E+06 & 5.31E-01  \\
		& 3     & 9.4312E+04 & 4.50E-02 & 2.2021E+05 & 1.15E-01 & 1.0830E+06 & 4.58E-01  \\
		& 5     & 9.4732E+04 & 4.64E-02 & 2.2062E+05 & 1.27E-01 & 1.0834E+06 & 3.51E-01  \\
		& 7     & 9.5152E+04 & 3.16E-02 & 2.2103E+05 & 3.15E-01 & 1.0838E+06 & 4.68E-01  \\
		& 9     & 9.5572E+04 & \textbf{2.78E-02} & 2.2144E+05 & \textbf{9.00E-02} & 1.0842E+06 & \textbf{3.46E-01}  \\
		\bottomrule
	\end{tabular}%
	\label{tab:table6}%
\end{table}%

\subsubsection{Experimental Study on Convergence Behavior}
We have conducted numerical experiments to verify the convergence rate using the ReLU and Tanh activation functions. For the ReLU activation function, we vary $n$ (where $n = gw$) and $ p $. The results are shown in panels (a) and (b) of Fig. \ref{fig:figRR1}. For panel (a), we fix $p = 40 $, $ hw = 45$, $ hd = 3 $, and $ gd = 2 $, while $gw$ takes values of (5,15,30,45,60). For panel (b), we set $hw=45$, $hd=3$, $gw=200$, $gd=2$, and let $p$ vary as $1,5,10,20,30,40$. In panel (a), as $n$ increases, the relative $L_2$ error decreases at the expected theoretical rate of $O(n^{-1})$. In panel (b), as $p$ increases, the relative $L_2$ error decays faster than the theoretical rate of $O(p^{-\alpha})$.
\par
For the Tanh activation function, vary $N$ (with $n = gw = 6N$) and $p $. The results are presented in panels (c) and (d) of Fig. \ref{fig:figRR1}. In panel (c), we fix $p=30$, $hw=45$, $hd=3$, and $gd=2$, while $N$ increases from 6 to 14 in steps 2, corresponding to gw values from 36 to 84 in steps 12. In panel (d), we set $ hw = 45 $, $hd = 3 $, $ gw = 250$, and $ gd = 2$, and let $ p$ vary as 1,5,10,20,30,40. In panel (c), as $N$ increases, the relative $L_2$ error decreases at the theoretical rate of $O(N^{-1})$. In panel (d), as $p$ increases, the error decays faster than $O(p^{-\alpha})$, similar to the behavior observed for the ReLU case in panel (b). This observation might be attributed to the potentially higher smoothness of the K-inner function.
\begin{figure}[H]
	\centering
	\includegraphics[width=0.8\textwidth]{./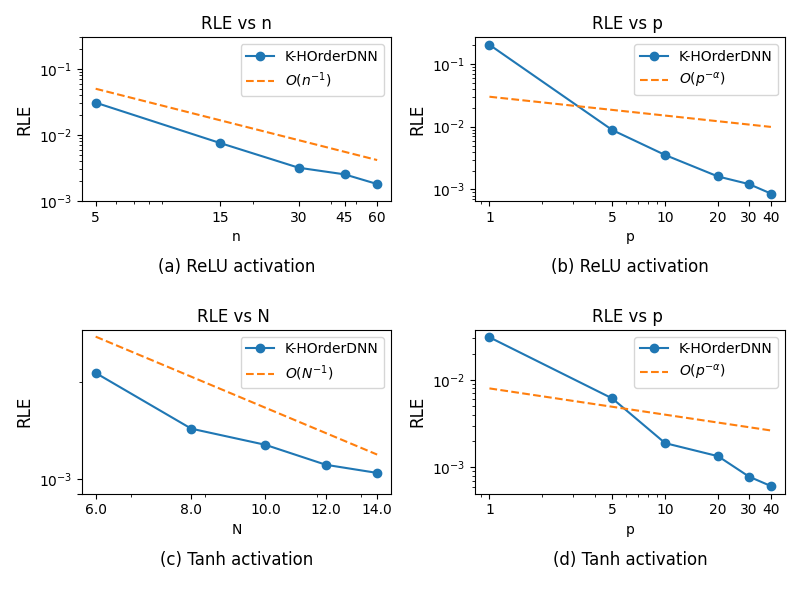}
	\caption{The relative $ L_2 $ error Trends for ReLU activation function (first row) and Tanh activation function(second row): (a) Loss vs $n$ for fixed $p = 40$; (b) Loss vs $p$ for fixed $n = 200$; (c) Loss vs $N$ for fixed $p = 30$ with $n = 6N$; (d) Loss vs $p$ for fixed $n = 250$.}
	\label{fig:figRR1}
\end{figure}
\par
\begin{figure}[ht]
	\centering
	\includegraphics[width=\textwidth]{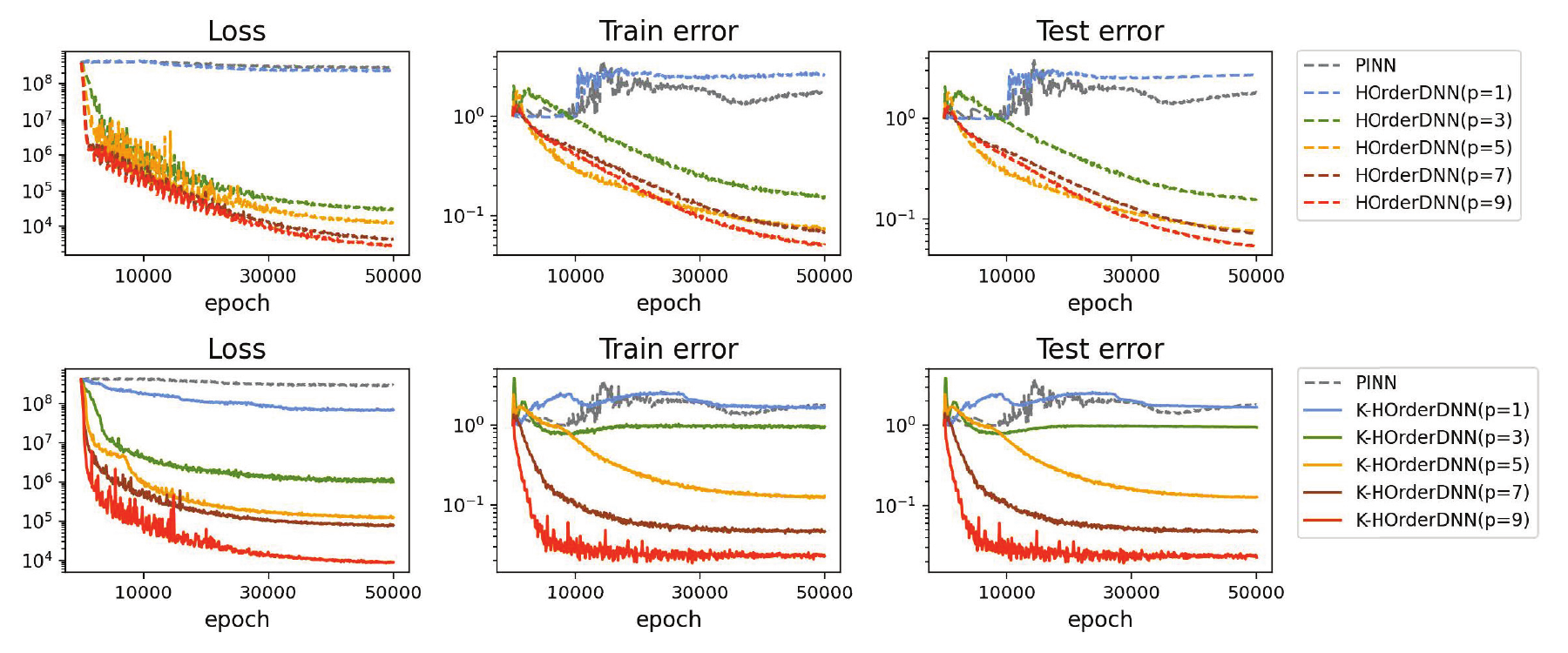}
	\caption{Convergence Processes of PINN, HOrderDNNs and K-HOrderDNNs on problem (\ref{equation4}) when $d$=2. Here, $hd$=3, $hw$=45, $gd$=2, $gw$=90 for K-HOrderDNNs, and $L=$6, $W=$90 for HOrderDNNs and PINN.}
	\label{fig:8}
\end{figure}
\subsection{Poisson equation}
\label{Poisson}
We consider the following Poisson equation with a Dirichlet condition in $\Omega = [0,1]^d \subset \mathbb{R}^d$
\begin{equation}
\left\{
\begin{aligned}
-\Delta u &= f, & \quad x &\in \Omega ,\\
u &= g, & \quad x &\in \partial \Omega,
\end{aligned}
\right.
\label{equation4}
\end{equation}
where the right-hand side $f$ and the boundary condition $g$ are derived from the exact solution. We use the loss function defined in Equation \eqref{eq:eq15} to train model and utilize the commonly adopted learning rate annealing methodology \cite{wang2021understanding} to adjust the loss weight parameter $\beta$. These techniques are also adopted when solving the Helmholtz equation. 

\subsubsection{Low-dimensional and high-frequency problem}
\label{case2d}
We first consider a two-dimensional problem with a high-frequency solution, represented by Eq. \eqref{equation2}. In this example, we choose the optimal hyperparameter settings in Subsection \ref{f1}. Specifically, for K-HOrderDNNs, we set $hw$ to 45, $hd$ to 3, $gd$ to 2, with $gw$ being set at twice the value of $hw$, and $p = 1, 3, 5, 7, 9$. For both HOrderDNNs and PINN, the network depth $L$ is set to 6, and the network width $W$ is aligned with $gw$. In each epoch, we resample $N_f = 5000$ , and $N_b = 1000$ training points. 
\par
\begin{figure}[ht]
	\centering
	\includegraphics[width=\textwidth]{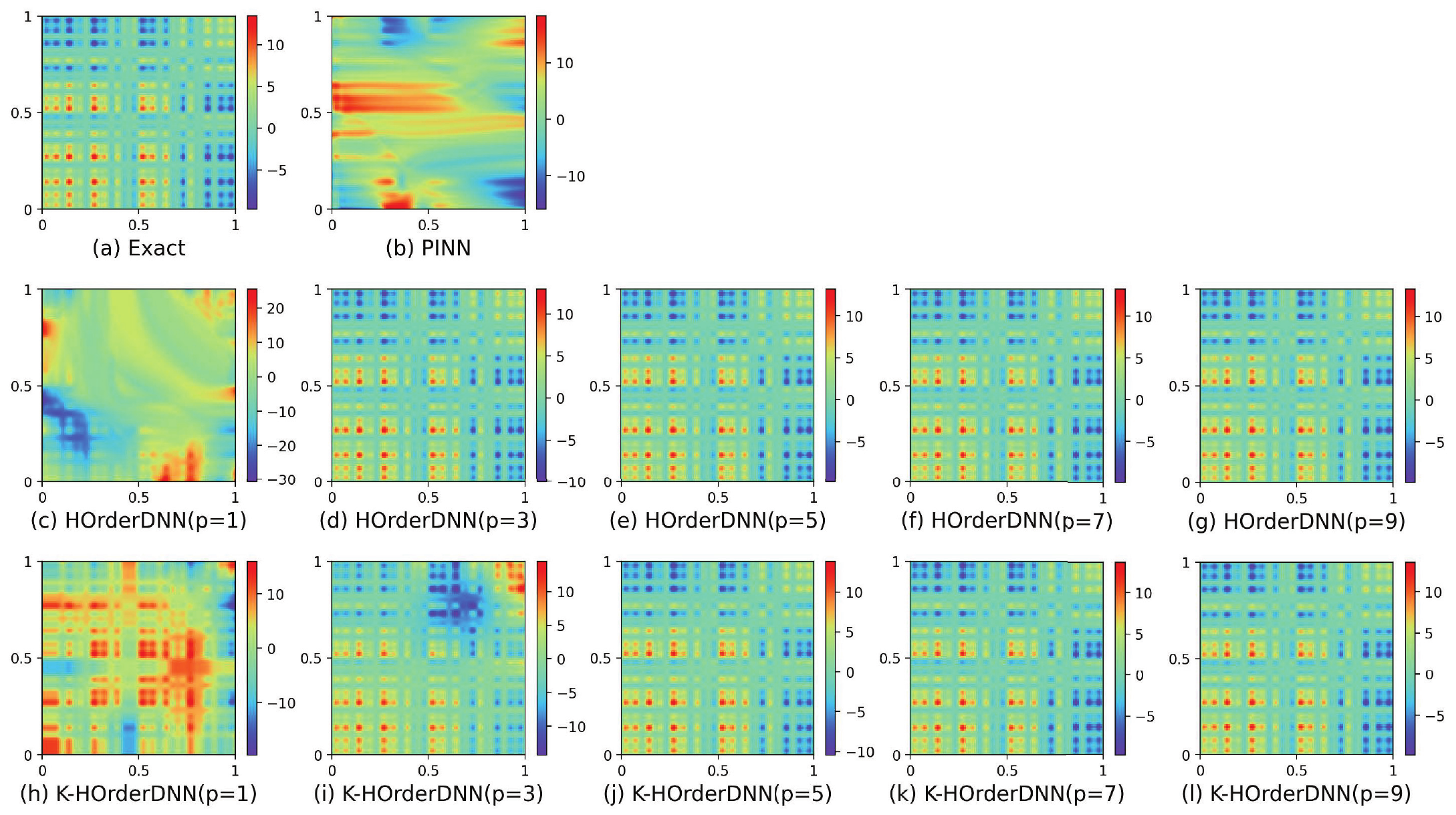}
	\caption{Exact ($a$) and numerical solutions of PINN ($b$), HOrderDNNs ($c-g$), and K-HOrderDNNs ($h-l$) on problem (\ref{equation4}) when $d$=2. Here, $hd$=3, $hw$=45, $gd$=2, $gw$=90 for K-HOrderDNNs, and $L$=6, $W$=220 for HOrderDNNs and PINN.}
	\label{fig:9}
\end{figure}
\par
\begin{table}[H]
	\centering
	\setlength{\tabcolsep}{3pt}
	\caption{The relative $L_2$ errors and the number of parameters obtained by PINN, HOrderDNNs, and K-HOrderDNNs on problem (\ref{equation4}) when $d$ = 2.}
	\smallskip
	\begin{tabular}{ccccccc}
		\toprule
		\multirow{2}[4]{*}{\diagbox{$p$}{Method}}  & \multicolumn{2}{c}{PINN} & \multicolumn{2}{c}{HOrderDNN} & \multicolumn{2}{c}{K-HOrderDNN} \\
		\cmidrule{2-7}          & Params & RLE   & Params & RLE   & Params & RLE \\
		\midrule
		\textbackslash{} & 4.1311E+04 & \textbf{9.78E-01} & \textbackslash{} & \textbackslash{} & \textbackslash{} & \textbackslash{} \\
		1     & \textbackslash{} & \textbackslash{} & 4.1491E+04 & 9.88E-01 & 1.3776E+04 & 1.01E+00 \\
		3     & \textbackslash{} & \textbackslash{} & 4.2571E+04 & 1.54E-01 & 1.3866E+04 & 7.74E-01 \\
		5     & \textbackslash{} & \textbackslash{} & 4.4371E+04 & 7.56E-02 & 1.3956E+04 & 1.26E-01 \\
		7     & \textbackslash{} & \textbackslash{} & 4.6891E+04 & 7.08E-02 & 1.4046E+04 & 4.63E-02 \\
		9     & \textbackslash{} & \textbackslash{} & 5.0131E+04 & \textbf{5.37E-02} & 1.4136E+04 & \textbf{1.91E-02} \\
		\bottomrule
	\end{tabular}%
	\label{tab:table7}%
\end{table}%
Table \ref{tab:table7} presents a comparative analysis of different models in terms of the number of parameters and the minimum $L_2$ relative errors. As we can see, K-HOrderDNN($p$) demonstrates superior performance over both the PINN and HOrderDNN($p$) at higher orders. Notably, K-HOrderDNN($p=9$) achieves the lowest relative $L_2$ error (1.91E-02) with 1.4136E+04 parameters, while HOrderDNN($p=9$) exhibits a higher relative $L_2$ error (5.37E-02) despite a significantly larger parameter count (5.0131E+04). The convergence behaviors of the PINN, HOrderDNNs, and K-HOrderDNNs are illustrated in Fig. \ref{fig:8}. K-HOrderDNN with larger $p$ converges faster than HOrderDNN, achieving lower errors, while the errors of PINN remain almost unchanged. The exact and numerical solutions for different models are depicted in Fig. \ref{fig:9}. We observe that PINN, HOrderDNN($p=1$), and K-HOrderDNN($p=1$) cannot fit local oscillations in the exact solution, while both HOrderDNNs and K-HOrderDNNs with the order $p>3$ successfully capture these oscillations, and the effects improve with increasing $p$. This illustrates that both HOrderDNNs and K-HOrderDNNs are capable of capturing oscillations at various scales when solving high-frequency PDEs.

\par
We also consider a Poisson problem \eqref{equation4} with a corner singular solution in an L-shaped domain $\Omega = (-1, 1)^2 \setminus [0, 1] \times [-1, 0]$. The exact solution is chosen as $ r^{\frac{2}{3}} \sin\left(\frac{2}{3} \theta\right) + x_2 \cos(4\pi(x_1 + 2x_2))$, where $(r, \theta)$ is the polar coordinate at the origin $O=(0,0)$. Notably, $u(x_1, x_2)$ has a corner singularity at the origin, characterized by $u \in H^{\frac{5}{3}-\epsilon} \not\subset H^2(\Omega)$ for $\epsilon > 0$\cite{zeng2022adaptive}. The right-hand side $f$ and the Dirichlet boundary function $g$ can can both be derived from the exact solution. We evaluate the performance of our K-HOrderDNNs and compare it with PINN and HOrderDNNs. For each method, we resample $N_f = 6000$ and $N_b = 400$ training points in each epoch, where $N_b$ is propositional to their lengths. The K-HOrderDNNs are configured with hyperparameters $hw = 90$, $hd = 3$, $gd = 3$, $gw = 90$, and $p=1, 3, 5, 7, 9$. Both HOrderDNNs and PINN share the same settings for $L = 7$ and $W = 90$, but HOrderDNNs also include $p=1, 3, 5, 7, 9$.
\par
\begin{table}[htbp]
	\centering
	\setlength{\tabcolsep}{3pt}
	\caption{The relative $L_2$ errors and the number of parameters obtained by PINN, HOrderDNNs, and K-HOrderDNNs on problem \eqref{equation4} in an L-shaped domain.}
	\smallskip
	\begin{tabular}{ccccccc}
		\toprule
		\multirow{2}[4]{*}{\diagbox{$p$}{Method}}  & \multicolumn{2}{c}{PINN} & \multicolumn{2}{c}{HOrderDNN} & \multicolumn{2}{c}{K-HOrderDNN} \\
		\cmidrule{2-7}          & Params & RLE   & Params & RLE   & Params & RLE \\
		\midrule
		\textbackslash{} & 4.9501E+04 & \textbf{4.36E-02} & \textbackslash{} & \textbackslash{} & \textbackslash{} & \textbackslash{} \\
		1     & \textbackslash{} & \textbackslash{} & 4.9681E+04 & 3.05E-02 & 3.4566E+04 & 1.15E-01 \\ 
		3     & \textbackslash{} & \textbackslash{} & 5.0761E+04 & 4.03E-03 & 3.4746E+04 & 4.23E-03 \\ 
		5     & \textbackslash{} & \textbackslash{} & 5.2561E+04 & 3.17E-03 & 3.4926E+04 & 4.17E-03 \\
		7     & \textbackslash{} & \textbackslash{} & 5.5081E+04 & 3.48E-03 & 3.5106E+04 & 4.37E-03 \\
		9     & \textbackslash{} & \textbackslash{} & 5.8321E+04 & \textbf{2.56E-03} & 3.5286E+04 & \textbf{2.70E-03} \\
		\bottomrule
	\end{tabular}%
	\label{tab:RR1}%
\end{table}%
\par
In Fig. \ref{fig:figR3}, we present the exact and the absolute pointwise errors for different method, with their relative $L_2$ errors summarized in Table \ref{tab:RR1}. As we can see, K-HOrderDNNs($p>1$) achieve comparable performance to HOrderDNNs($p>1$) using fewer parameters. Both methods significantly outperform PINN, achieving an order-of-magnitude reduction in error. In conclusion, K-HOrderDNNs, which utilize fewer parameters, demonstrate comparable accuracy to HOrderDNNs in approximating a less smooth solution within an L-shaped domain.
\begin{figure}[H]
	\centering
	\includegraphics[width=\textwidth]{./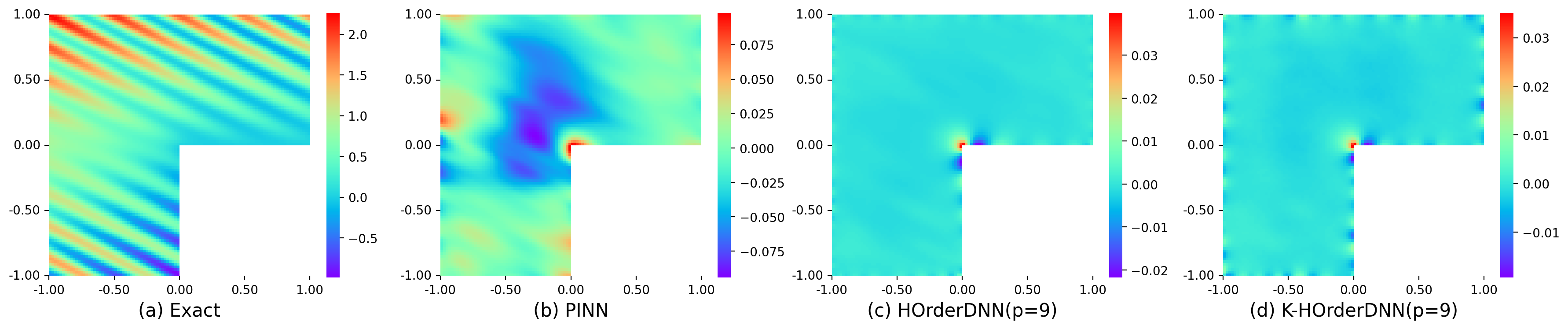}
	\caption{Exact solution and absolute pointwise errors of PINN, HOrderDNN(9), and K-HOrderDNN(9) in an L-shaped domain. Here, hd=3, hw=90, gd=3, gw=90 for K-HOrderDNN(9), and L =7, W =90 for HOrderDNN(9) and PINN.}
	\label{fig:figR3}
\end{figure}
\par

\subsubsection{High-dimensional and low-frequency problem}
\label{PE:hdlf}
Next, we turn our attention to a high-dimensional problem with a low-frequency exact solution, which is defined as follows:
\begin{equation}
u(x_1, x_2, \ldots, x_{10}) = \prod_{i=1}^{10} \sin(\pi x_i), \quad  x_i \in [0, 1].
\label{equation5}
\end{equation}
\par
We use the network architectures of PINN, HOrderDNNs, and K-HOrderDNNs as described in Table \ref{tab:table5} for the case when d=10 to obtain the numerical solutions. In each epoch, we resample $N_f = 8000$ and $N_b = 2000$. In subsequent experiments, this configuration is used for all high-dimensional problems.
\par
\begin{table}[htbp]
	\centering
	\setlength{\tabcolsep}{3pt}
	\caption{The relative $L_2$ errors and the number of parameters obtained by PINN, HOrderDNNs, and K-HOrderDNNs for problem (\ref{equation4}) when $d$=10 with a solution represented by Eq. (\ref{equation5}).}
	\smallskip
	\begin{tabular}{ccccccc}
		\toprule
		\multirow{2}[4]{*}{\diagbox{$p$}{Method}}  & \multicolumn{2}{c}{PINN} & \multicolumn{2}{c}{HOrderDNN} & \multicolumn{2}{c}{K-HOrderDNN} \\
		\cmidrule{2-7}          & Params & RLE   & Params & RLE   & Params & RLE \\
		\midrule
		\textbackslash{} & 1.3545E+05 & \textbf{2.50E-01} & \textbackslash{} & \textbackslash{} & \textbackslash{} & \textbackslash{} \\
		1     & \textbackslash{} & \textbackslash{} & 3.4839E+05 & \textbf{1.37E+00} & 9.3892E+04 & 1.92E-01 \\
		3     & \textbackslash{} & \textbackslash{} & 2.2033E+08 & \textbackslash{} & 9.4312E+04 & 4.03E-02 \\
		5     & \textbackslash{} & \textbackslash{} & 1.2698E+10 & \textbackslash{} & 9.4732E+04 & 5.65E-02 \\
		7     & \textbackslash{} & \textbackslash{} & 2.2549E+11 & \textbackslash{} & 9.5152E+04 & \textbf{3.37E-02} \\
		9     & \textbackslash{} & \textbackslash{} & 2.1000E+12 & \textbackslash{} &9.5572E+04 & 4.65E-02 \\
		\bottomrule
	\end{tabular}%
	\label{tab:table8}%
\end{table}%
\par
The relative $L_2$ errors and number of parameters are presented in Table \ref{tab:table8}, where we can see that K-HOrderDNNs achieve the highest accuracy among all the methods. Particularly noteworthy is that the number of parameters of HOrderDNN($p$) surges significantly as the order $p$ rises, which makes the computation of HOrderDNNs($p > 1$) impractical. In contrast, K-HOrderDNN($p$) demonstrates a more gradual increase in parameters, ensuring computational efficiency. We present the absolute pointwise errors obtained by PINN, HOrderDNN($p=1$), and K-HOrderDNNs in Fig. \ref{fig:10}. Due to the difficulty of visualizing high-dimensional spaces,  we only present points on a slice defined by the first two dimensions, $x_1$ and $x_2$. As expected, PINN and HOrderDNN($p=1$) fail to capture the exact solution, while K-HOrderDNNs($p>1$) succeed, with its ability enhancing as $p$ increases. This observation demonstrates that K-HOrderDNNs still perform well for low-frequency, high-dimensional problems.
\par
\begin{figure}[ht]
\centering
\includegraphics[width=\textwidth]{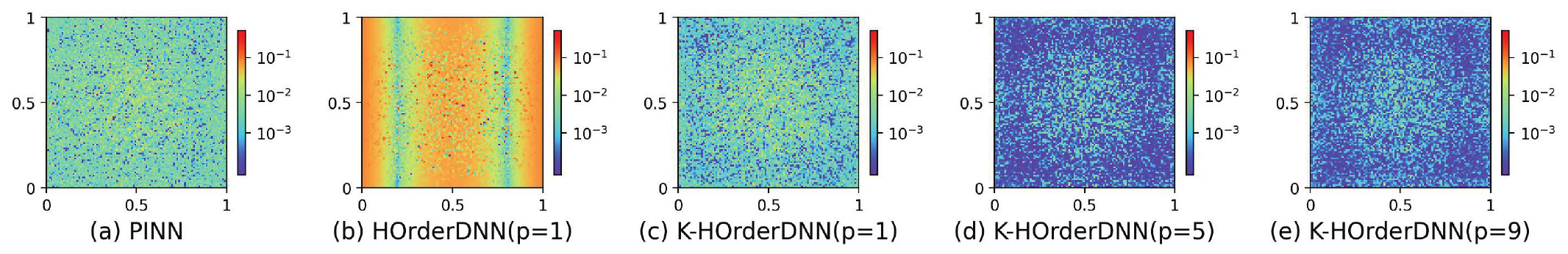}
\caption{Comparative analysis of absolute pointwise errors for PINN, HOrderDNN($p=1$), and K-HOrderDNNs on a 2D Slice with a exact solution represented by Eq. (\ref{equation5}).}
\label{fig:10}
\end{figure}

\subsubsection{High-dimensional and high-frequency problem}
Finally, we consider three ten-dimensional, high-frequency test problems:
\par
\textbf{Test problem 1:} The exact solution is an additive composition of high-frequency components, expressed as: 
\begin{equation}
u(x_1, x_2, \cdots, x_{d}) = \sum_{i=1}^{10} \sum_{j=0}^{5} \sin(2^j \pi x_i), \quad \mathrm{x}_i \in [0,1] .
\label{equation6}
\end{equation}
\par
\textbf{Test problem 2:} We consider a complex exact solution with enhanced frequency interactions, formulated as:  
\begin{equation}
\begin{aligned}
u(x_1, x_2, \ldots, x_d) = \sum_{i=1}^{8} \sin(10 \pi x_i) \cdot \sin(10 \pi x_{i+1}) \cdot \sin(10 \pi x_{i+2}), \quad x_i \in [0, 1].
\end{aligned}
\label{equation7}
\end{equation}
\par
\textbf{Test problem 3:} We further test a more complex exact solution that integrates frequency variation and intricate interactions, which is defined as follows:
\begin{equation}
\begin{aligned}
u(x_1, x_2, \ldots, x_d) = \sum_{i=1}^{8} g(x_i) \cdot g(x_{i+1}) \cdot g(x_{i+2}), \quad x_i \in [0, 1],
\end{aligned}
\label{equation8}
\end{equation}
where $g(x_i)$ is defined in Equation \eqref{equation2}.

Tables \ref{tab:table9} presents the relative $L_2$ errors and the number of parameters for the above three test problems. In agreement with observations in the Subsection \ref{PE:hdlf}, in all cases, K-HOrderDNNs with larger $p$ exhibit superior performance over both HorderDNN($p=1$) and PINN, as evidenced by its lower relative $L_2$ errors with fewer parameters. Unlike HorderDNNs($p>1$), which struggle with the CoD issue, K-HOrderDNNs maintain computational feasibility. The absolute pointwise errors for different models of the above three cases are depicted in Fig. \ref{fig:11}. These figures show that PINN, HorderDNN($p=1$), and K-HOrderDNN($p$) with a smaller value of $p$ are unable to fit oscillations caused by high frequencies, whereas K-HOrderDNN($p$) with a larger value of $p$ successfully captures these oscillations. Furthermore, the approximation capabilities of K-HOrderDNN($p$) improve as $p$ increases. These observations demonstrate K-HOrderDNN's superior capability to capture the high-frequency information of solutions when solving high-dimensional and high-frequency PDEs. 
\par
\begin{table}[htbp]
	\centering
	\setlength{\tabcolsep}{3pt}
	\caption{The relative $L_2$ errors and the number of parameters obtained by PINN, HOrderDNNs, and K-HOrderDNNs for Test Problems 1-3 when $d$=10.}
	\smallskip
	\begin{tabular}{cccccc}
		\toprule
		\multirow{2}[4]{*}{Method } & \multirow{2}[4]{*}{ $p$ } & \multirow{2}[4]{*}{ Params } & \multicolumn{3}{c}{REL} \\
		\cmidrule{4-6}          &       &       & Test problem 1 & Test problem 2 & Test problem 3 \\
		\midrule
		PINN   & \textbackslash{} & 1.3545E+05 & \textbf{5.21E-01} & \textbf{1.00E+00} & \textbf{9.91E-01} \\
		\midrule
		\multirow{5}[2]{*}{HOrderDNN} & 1     & 3.4839E+05 & \textbf{5.41E-01} & \textbf{1.00E+00} & \textbf{9.93E-01} \\
		& 3     & 2.2033E+08 & \textbackslash{} & \textbackslash{} & \textbackslash{} \\
		& 5     & 1.2698E+10 & \textbackslash{} & \textbackslash{} & \textbackslash{} \\
		& 7     & 2.2549E+11 & \textbackslash{} & \textbackslash{} & \textbackslash{} \\
		& 9     & 2.1000E+12 & \textbackslash{} & \textbackslash{} & \textbackslash{} \\
		\midrule
		\multirow{5}[2]{*}{K-HOrderDNN} & 1     & 9.3892E+04 & 5.59E-01 & 1.00E+00 & 9.92E-01 \\
		& 3     & 9.4312E+04 & 3.60E-01 & 1.00E+00 & 8.63E-02 \\
		& 5     & 9.4732E+04 & 1.52E-03 & 1.00E+00 & \textbf{1.27E-02} \\
		& 7     & 9.5152E+04 & 1.65E-03 & 4.55E-01 & 2.30E-02 \\
		& 9     & 9.5572E+04 & \textbf{1.16E-03} & \textbf{3.09E-02} & 1.30E-02 \\
		\bottomrule
	\end{tabular}%
	\label{tab:table9}%
\end{table}%

\subsection{Helmholtz equation}
\label{HE}
We consider the following Helmholtz equation with a Dirichlet condition in domain $\Omega = [0,1]^d \subset \mathbb{R}^d$
\begin{equation}
\left\{
\begin{aligned}
\Delta u + k^2 u &= f, & \quad x &\in \Omega ,\\
u &= g, & \quad x &\in \partial \Omega,
\end{aligned}
\right.
\label{equation9}
\end{equation}
where k=5, and right-hand side $f$ and the boundary condition $g$ are computed from the exact solution.

\subsubsection{Low-dimensional and high-frequency problem}
We first consider a two-dimensional problem with the following high frequency exact solution:
\begin{equation}
u(x_1, x_2) = \sin(25\pi x_1)\sin(25 \pi x_2), \quad \mathrm{x}_i \in [0,1].
\label{equation10}
\end{equation}
\par
\begin{figure}[H]
	\centering
	
	\subfigure[Test problem 1]{
		\includegraphics[width=\linewidth]{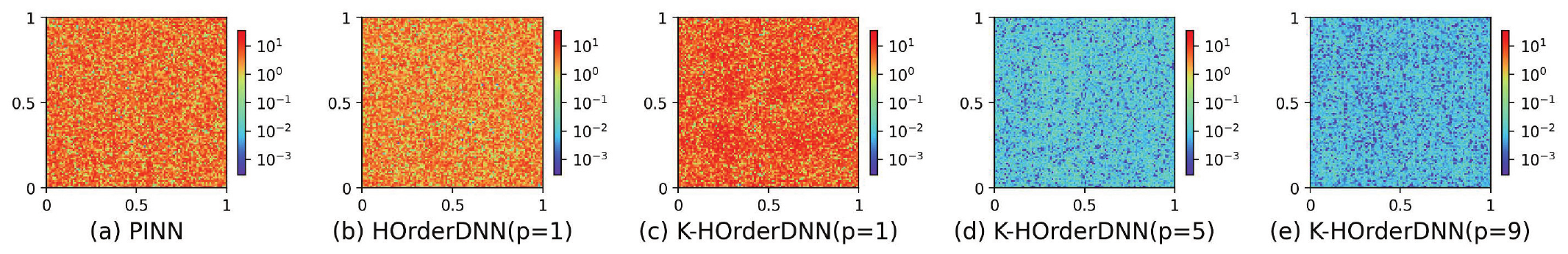}}
	
	\subfigure[Test problem 2]{
		\includegraphics[width=\linewidth]{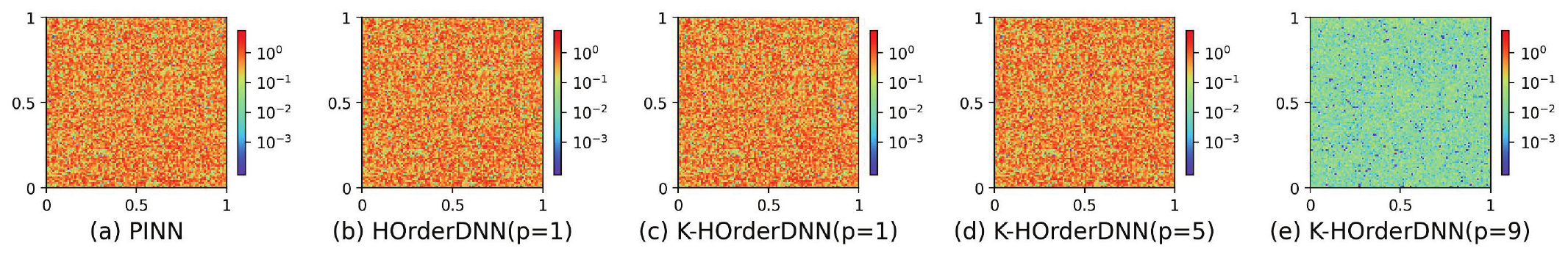}}
	
	\subfigure[Test problem 3]{
		\includegraphics[width=\linewidth]{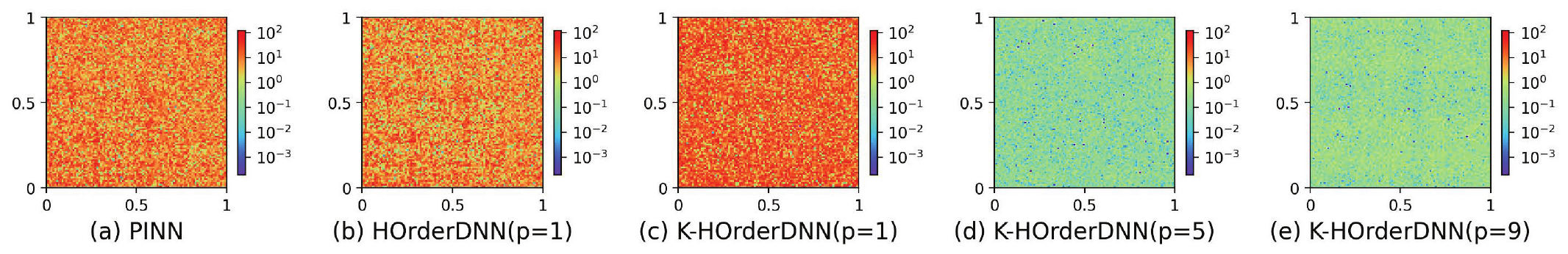}}
	
	\caption{The absolute pointwise errors obtained by PINN, HOrderDNN($p=1$), and K-HOrderDNNs on a 2D slice for Test Problem 1(\uppercase\expandafter{\romannumeral1}), Test Problem 2(\uppercase\expandafter{\romannumeral2}), and Test Problem 3(\uppercase\expandafter{\romannumeral3}).}
	
	\label{fig:11}
	
	%
	%
\end{figure}
\par
\begin{table}[H]
	\centering
	\setlength{\tabcolsep}{3pt}
	\caption{The relative $L_2$ errors and the number of parameters obtained by PINN, HOrderDNNs, and K-HOrderDNNs for problem \eqref{equation9} when $d$ = 2.}
	\smallskip
	\begin{tabular}{ccccccc}
		\toprule
		\multirow{2}[4]{*}{\diagbox{$p$}{Method}}  & \multicolumn{2}{c}{PINN} & \multicolumn{2}{c}{HOrderDNN} & \multicolumn{2}{c}{K-HOrderDNN} \\
		\cmidrule{2-7}          & Params & RLE   & Params & RLE   & Params & RLE \\
		\midrule
		\textbackslash{} & 4.1311E+04 & \textbf{1.00E+00} & \textbackslash{} & \textbackslash{} & \textbackslash{} & \textbackslash{} \\
		1     & \textbackslash{} & \textbackslash{} & 4.1491E+04 & 9.97E-01 & 1.3776E+04 & 1.00E+00 \\
		3     & \textbackslash{} & \textbackslash{} & 4.2571E+04 & 9.76E-01 & 1.3866E+04 & 9.02E-02 \\
		5     & \textbackslash{} & \textbackslash{} & 4.4371E+04 & 7.44E-01 & 1.3956E+04 & 7.87E-02 \\
		7     & \textbackslash{} & \textbackslash{} & 4.6891E+04 & 2.47E-01 & 1.4046E+04 & \textbf{7.07E-03} \\
		9     & \textbackslash{} & \textbackslash{} & 5.0131E+04 & \textbf{1.23E-01} & 1.4136E+04 & 1.19E-01 \\
		\bottomrule
	\end{tabular}%
	\label{tab:table12}%
\end{table}%
\par
In this example, we employ the hyperparameter settings outlined in Subsection \ref{case2d}. Table \ref{tab:table12} presents a comparison of the relative $L_2$ errors and the number of parameters for different models. Similar to the results in Subsection \ref{case2d}, K-HOrderDNNs show superior performance compared to HOrderDNNs and PINN. In detail, PINN has a relatively high error of 1.00E+00, indicating that it struggles with high-frequency problems, while K-HOrderDNN($p=7$) achieves the 7.07E-03 errors with the 1.4046E+04 parameters, three orders of magnitude smaller than PINN and two orders smaller than HOrderDNNs. Note that K-HOrderDNN($p=9$) performs worse than K-HOrderDNN($p=7$), which may be attributed to a more complex optimization landscape, leading to the optimization algorithm becoming trapped in suboptimal local minima. We adjust the learning rate from 0.004 to 0.006 to investigate this hypothesis, which results in a significant reduction in the relative $L_2$ error from 1.19E-01 to 1.22E-02.
\par
The convergence behaviors of different models are illustrated in Fig. \ref{fig:14}. It is clearly observed that K-HOrderDNNs($p >1$) still exhibit significantly faster convergence and achieve smaller errors compared to both HOrderDNNs($p >1$) and PINN. Furthermore, the exact and numerical solutions for different models are shown in Fig. \ref{fig:15}, where HOrderDNN($p=9$) and K-HOrderDNNs($p>1$) can capture local oscillations in target functions effectively while PINN, HOrderDNNs($p<9$) and K-HOrderDNN($p=1$) can not.
\par
\begin{figure}[ht]
	\centering
	\includegraphics[width=\textwidth]{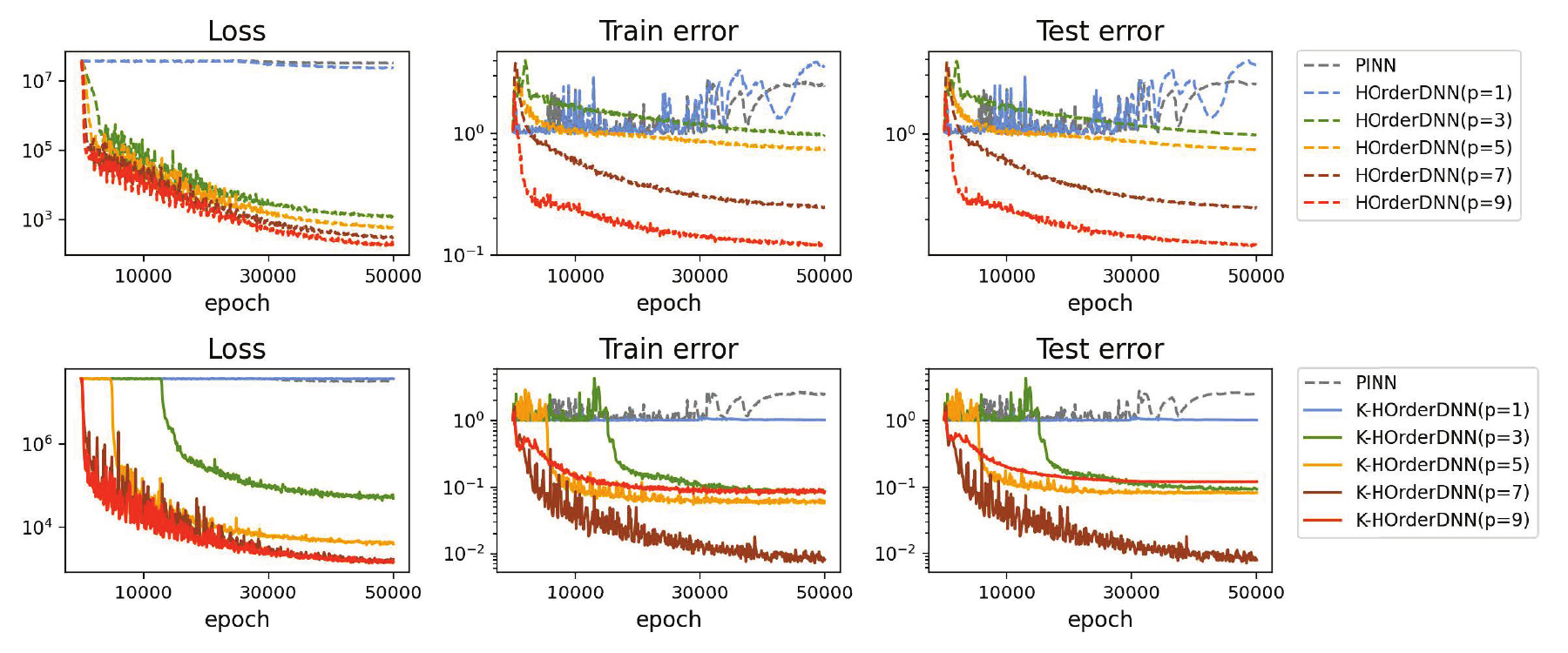}
	\caption{Convergence processes of PINN, HOrderDNNs and K-HOrderDNNs on problem (\ref{equation9}) when $d$=2. Here, $hd$=3, $hw$=45, $gd$=2, $gw$=90 for K-HOrderDNNs, and $L$=6, $W$=90 for HOrderDNNs and PINN.}
	\label{fig:14}
\end{figure} 
\par
\begin{figure}[ht]
	\centering
	\includegraphics[width=\textwidth]{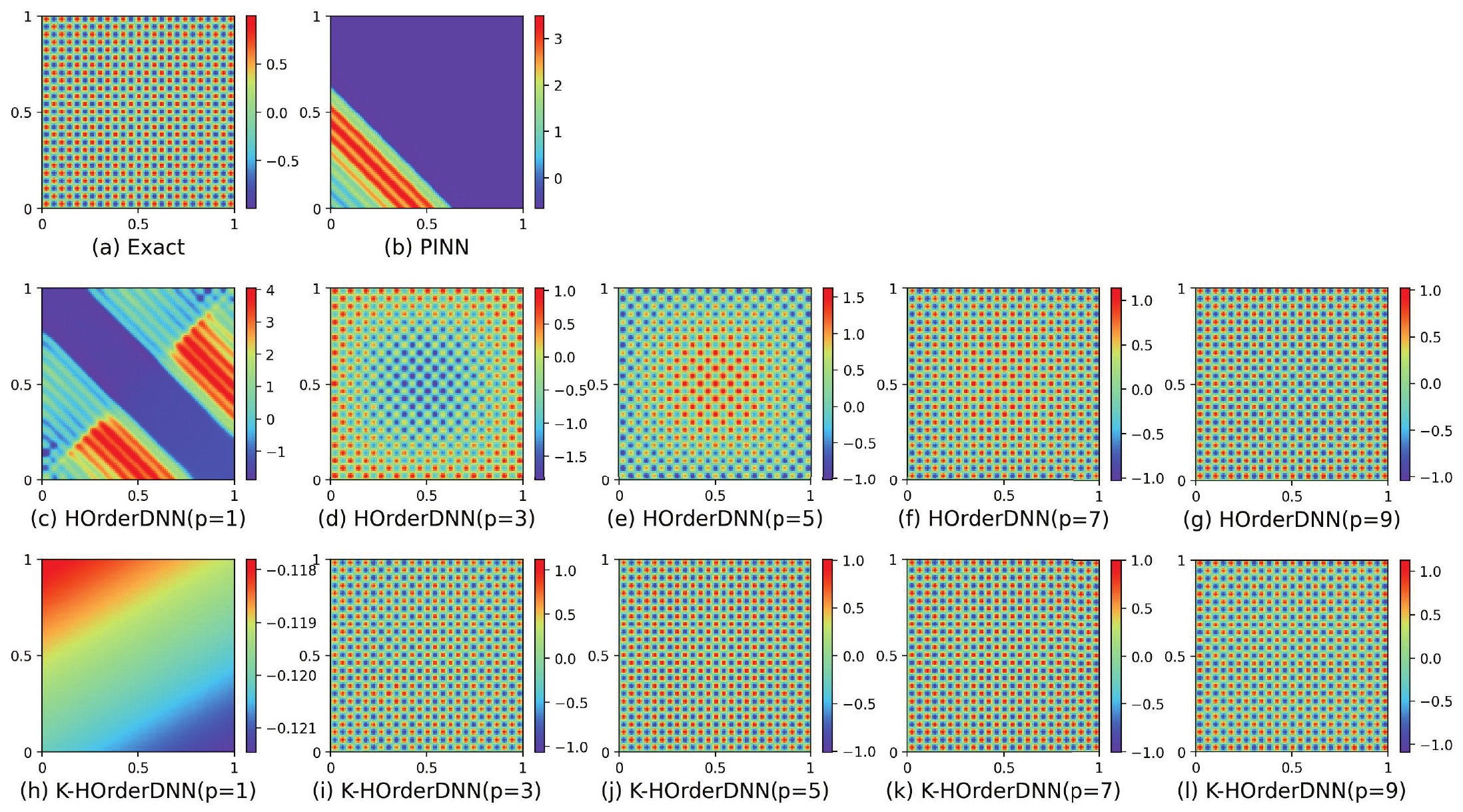}
	\caption{Exact solution ($a$) and numerical solutions given by PINN ($b$), HOrderDNNs (the second row), K-HOrderDNNs (the third row) on problem \eqref{equation9} when d=2. Here, $hd$=3, $hw$=45, $gd$=2, $gw$=90 for K-HOrderDNNs, and $L$=6, $W=$220 for HOrderDNNs and PINN.}
	\label{fig:15}
\end{figure}

\subsubsection{High-dimensinal and low-frequency problem}
Next, we turn to a high-dimensional problem with a low-frequency exact solution defined in Eq. \eqref{equation5}. The results for this example are summarized in Table \ref{tab:table13}. As expected, K-HOrderDNNs($p>1$) demonstrate superior performance compared to both PINN and HOrderDNN($p = 1$), especially at order 3, where it achieves the lowest error of 5.73E-02. Similar to the observations in Subsection \ref{PE:hdlf}, HOrderDNNs become intractable due to parameter explosion, while the number of parameters in K-HOrderDNNs increases moderately. Fig. \ref{fig:16} depicts the absolute pointwise errors for PINN, HOrderDNN($p=1$), and K-HOrderDNNs, where K-HOrderDNN($p$) with a larger $p$ can obtain more accurate results. This further validates the capability of K-HOrderDNN($p$) to efficiently solve high-dimensional PDEs. 
\begin{table}[ht]
	\centering
	\setlength{\tabcolsep}{3pt}
	\caption{The relative $L_2$ errors and the number of parameters obtained by PINN, HOrderDNNs, and K-HOrderDNNs for problem \eqref{equation9} when $d$=10 with a solution represented by Eq. \eqref{equation5}.}
	\smallskip
	\begin{tabular}{ccccccc}
		\toprule
		\multirow{2}[4]{*}{\diagbox{$p$}{Method}}  & \multicolumn{2}{c}{PINN} & \multicolumn{2}{c}{HOrderDNN} & \multicolumn{2}{c}{K-HOrderDNN} \\
		\cmidrule{2-7}          & Params & RLE   & Params & RLE   & Params & RLE \\
		\midrule
		\textbackslash{} & 1.3545E+05 & \textbf{2.16E-01} & \textbackslash{} & \textbackslash{} & \textbackslash{} & \textbackslash{} \\
		1     & \textbackslash{} & \textbackslash{} & 3.4839E+05 & \textbf{1.56E+00} & 9.3892E+04 & 2.35E-01 \\
		3     & \textbackslash{} & \textbackslash{} & 2.2033E+08 & \textbackslash{} & 9.4312E+04 & \textbf{5.73E-02} \\
		5     & \textbackslash{} & \textbackslash{} & 1.2698E+10 & \textbackslash{} & 9.4732E+04 & 9.90E-02 \\
		7     & \textbackslash{} & \textbackslash{} & 2.2549E+11 & \textbackslash{} & 9.5152E+04 & 7.46E-02  \\
		9     & \textbackslash{} & \textbackslash{} & 2.1000E+12 & \textbackslash{} &9.5572E+04 & 7.71E-02 \\
		\bottomrule
	\end{tabular}%
	\label{tab:table13}%
\end{table}%
\begin{figure}[ht]
\centering
\includegraphics[width=\textwidth]{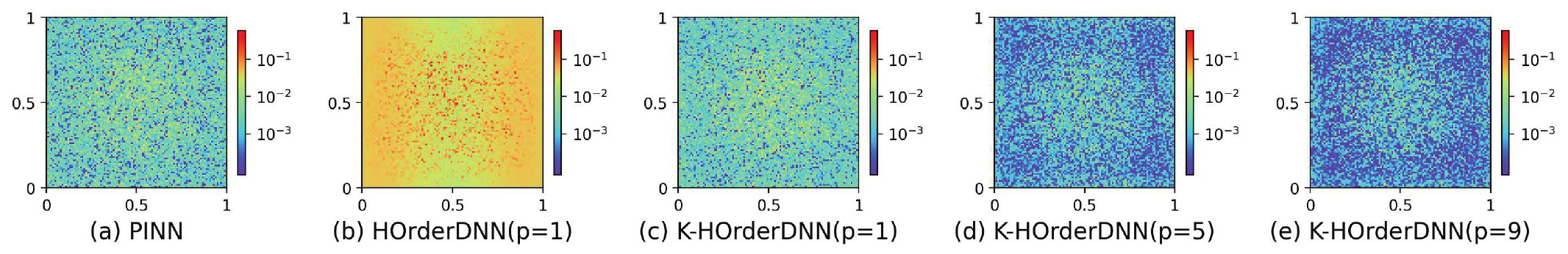}
\caption{Comparative analysis of absolute pointwise errors for PINN, HOrderDNN($p=1$), and K-HOrderDNNs on a 2D Slice with a exact solution represented by Eq. (\ref{equation5}).}
\label{fig:16}
\end{figure}

\subsubsection{High-dimensinal and high-frequency problem}
Lastly, we examine a high-dimensional problem with a high-frequency solution outlined in Eq. \eqref{equation8}. Table \ref{tab:table14} displays the relative $L_2$ errors and the number of parameters.
It is evident that K-HOrderDNNs($p>1$) outperform both PINN and HOrderDNN($p=1$), with the relative $L_2$ errors of K-HOrderDNN($p$) decreasing significantly as $p$ increases, while HOrderDNNs($p>1$) continue to exhibit limitations. Furthermore, the absolute pointwise errors of PINN, HOrderDNN($p=1$), and K-HOrderDNN($p$) are illustrated in Fig. \ref{fig:17}. As observed previously, PINN, HOrderDNN($p=1$), and K-HOrderDNN($p=1$) fail to approximate high-frequency solutions effectively, while K-HOrderDNNs($p>1$) capture them effectively. This demonstrates the superiority of the proposed K-HOrderDNNs in solving high-frequency PDEs in high dimensions.
\par
\begin{table}[htbp]
	\centering
	\setlength{\tabcolsep}{3pt}
	\caption{The relative $L_2$ errors and the number of parameters obtained by PINN, HOrderDNNs, and K-HOrderDNNs for problem \eqref{equation9} when $d$=10 with a solution represented by Eq. \eqref{equation8}.}
	\smallskip
	\begin{tabular}{ccccccc}
		\toprule
		\multirow{2}[4]{*}{\diagbox{$p$}{Method}}  & \multicolumn{2}{c}{PINN} & \multicolumn{2}{c}{HOrderDNN} & \multicolumn{2}{c}{K-HOrderDNN} \\
		\cmidrule{2-7}          & Params & RLE   & Params & RLE   & Params & RLE \\
		\midrule
		\textbackslash{} & 1.3545E+05 & \textbf{9.93E-01} & \textbackslash{} & \textbackslash{} & \textbackslash{} & \textbackslash{} \\
		1     & \textbackslash{} & \textbackslash{} & 3.4839E+05 & \textbf{9.93E-01} & 9.3892E+04 & 9.93E-01 \\
		3     & \textbackslash{} & \textbackslash{} & 2.2033E+08 & \textbackslash{} & 9.4312E+04 & 2.12E-02 \\
		5     & \textbackslash{} & \textbackslash{} & 1.2698E+10 & \textbackslash{} & 9.4732E+04 & 6.23E-03 \\
		7     & \textbackslash{} & \textbackslash{} & 2.2549E+11 & \textbackslash{} & 9.5152E+04 & \textbf{4.40E-03}  \\
		9     & \textbackslash{} & \textbackslash{} & 2.1000E+12 & \textbackslash{} &9.5572E+04 & 6.05E-03 \\
		\bottomrule
	\end{tabular}%
	\label{tab:table14}%
\end{table}%
\par
\begin{figure}[ht]
\centering
\includegraphics[width=\textwidth]{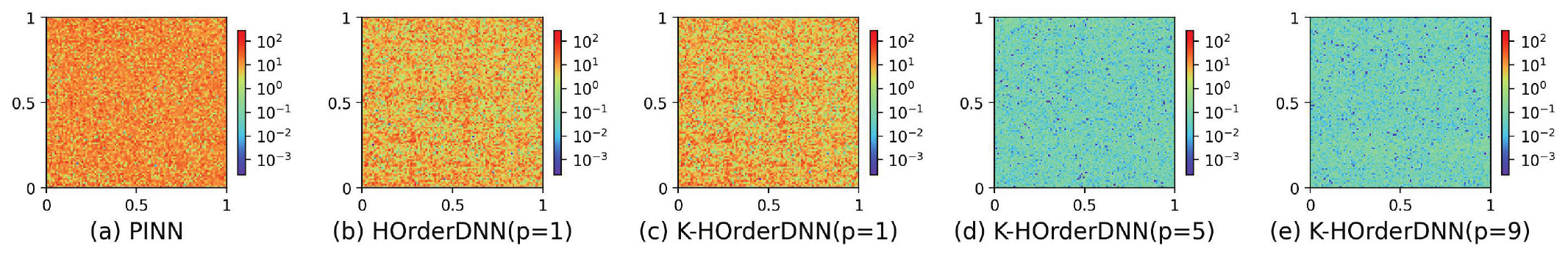}
\caption{Comparative analysis of absolute pointwise errors for PINN, HOrderDNN($p=1$), and K-HOrderDNN($p$) on a 2D Slice with a exact solution represented by Eq. (\ref{equation8}).}
\label{fig:17}
\end{figure}
\section{Discussion}
We compare our approach with several state-of-the-art methods, including Tensor Neural Networks (TNN) \cite{wang2024solving} and Kolmogorov–Arnold Networks (KAN) \cite{liu2024kan}, which have shown promise in high-dimensional function approximation. Ziming Liu et al. \cite{liu2024kan}, also inspired by the Kolmogorov-Arnold Representation Theorem (KAT), which was later refined into the KST, recently proposed KAN for function approximation. Both KAN and K-HOrderDNN are inspired by the KAT or KST, but they differ in how they model univariate K-inner and K-outer functions. In KAN, the univariate K-inner and K-outer functions are parametrized as B-spline curves, with learnable coefficients for the local B-spline basis functions. However, the prototype of KAN is relatively simple, limiting its ability to approximate complex functions, especially when smooth splines are involved. To address this, KAN is extended to deeper and wider architectures to enhance its approximation capabilities. In contrast, K-HOrderDNN approximates the univariate K-inner functions using HOrderDNNs and models the univariate K-outer function with a fully connected subnetwork. Experimentally, K-HOrderDNN achieved comparable accuracy to KAN in a 10-dimensional high-frequency function fitting task, but with about 12 times faster training. When solving the 2D high-frequency Poisson equation, K-HOrderDNN outperformed KAN in accuracy, while requiring about one-fifth of the training time. A detailed comparison of the architecture and experimental results for KAN and K-HOrderDNN can be found in Appendix C. TNN adopts a fundamentally different approach, leveraging a low-rank tensor structure to approximate a high-dimensional function as a sum of rank-one components formed by tensor products of one-dimensional functions. This structure reduces high-dimensional integrals in the loss function to one-dimensional integrals, enabling efficient computation via classical quadrature schemes. However, TNN is limited to problems where the coefficients and source terms exhibit a tensor-product structure, where K-HOrderDNN not restricted to this structure. Detailed the architecture and experimental results of TNN and HOrderDNN are provided in Appendix D.

\section{Conclusion} 
Inspired by KST, we have introduced K-HOrderDNN to solve the CoD problem suffered by HOrderNN when solving high frequency PDEs in high dimensional space. K-HOrderDNN utilizes an HOrderDNN to efficiently approximate univariate inner functions and employs a fully connected subnetwork to simulate the univariate outer function, which are integrated via the KST framework to approximate a multivariate function. We have established approximation rates for K-HOrderDNN over a dense subset of continuous multivariate functions, and demonstrated its ability to bypass the CoD. Through extensive numerical experiments on high frequency function fitting problems, high frequency Poisson and Helmholtz equations, especially in high dimensional problems, we have demonstrated that K-HOrderDNN($p>1$) not only further enhances the capabilities of HOrderDNN($p$) in tackling high frequency challenges, but also solves the parameter explosion problem encountered by HOrderDNN($p>1$) in high dimensional problems, while still surpassing the performance of PINN. For future work, we plan to extend K-HOrderDNN to tackle more complex PDEs and to address high dimensional sampling issues.
\section*{Acknowledgments}
This research is supported partly by National Natural Science Foundation of China with grants 12101609, National Key R\&D Program of China with grants 2019YFA0709600, 2019YFA0709602,  2020YFA0713500, and Hunan Provincial Innovation Foundation For Postgraduate with grants CX20220646.

\bibliographystyle{elsarticle-num}
\bibliography{zyq}

\appendix
\renewcommand\thesection{A}
\newpage
\section*{Appendix A. Proof of Theorem \ref{Theorem4}}
\label{sec:appendix_a}
\setcounter{thm}{0}
\setcounter{equation}{0}
\setcounter{definition}{0}
\renewcommand{\thethm}{A\arabic{thm}}
\renewcommand{\theequation}{A\arabic{equation}}
\renewcommand{\thedefinition}{A\arabic{definition}}

\begin{proof}
Let $g$ be the K-outer function associated with $f$, whose smoothness is characterized by the standard modulus of continuity. Owing to the uniform continuity of $g$, there exists a linear spline $S_g$ over an equally partitioned sequence with uniform spacing $\delta=\frac{d}{n}$ such that
\begin{equation}
|g(z)-S_g(z)|\le \omega \left ( g, \delta \right ), \forall  z \in [0,d],
\label{equationA1}
\end{equation}
for any $\delta > 0 $. It is easy to confirm that Equation \eqref{equationA1} holds by proving its applicability within each subinterval $z \in [z_i,z_{i+1}]$. 
\par
From equation \eqref{KST} in Theorem \ref{thm1}, we conclude that
\par
\begin{align}
\left| f(x) - \sum_{q=0}^{2d} S_g \left( \sum_{i=1}^{d} \lambda_{i} \phi_q(x_{i}) \right) \right| \leq \sum_{q=0}^{2d} \left| g \left( \sum_{i=1}^{d} \lambda_{i} \phi_q(x_{i}) \right) - S_g \left( \sum_{i=1}^{d} \lambda_{i} \phi_q(x_{i}) \right) \right|
\leq (2d+1)\omega(g, \delta).
\label{equationA2}
\end{align}
Building on the definition of $\widetilde{L}_{q}$ given in Theorem \ref{thm2}, and applying equation \eqref{eq2}, we get
\par
\begin{align}
|\sum_{i=1}^{d} \lambda _{i}  \phi _{q}\left ( x_{i}  \right )   - \sum_{i=1}^{d} \lambda _{i}  \widetilde{L}_{q}\left ( x_{i}  \right  )| \le \frac{d K C_\phi}{p^{\alpha}}.
\label{equationA3}
\end{align}
Thus, by using the triangle inequality, the estimates from \eqref{equationA1} and \eqref{equationA3}, and the property of the modulus of continuity, we deduce that
\begin{equation}
\begin{aligned}
|S_g\Big(\sum_{i=1}^{d} \lambda _{i} \phi _{q}(x_{i})\Big)-S_g\Big(\sum_{i=1}^{d}\lambda _{i} \widetilde{L}_{q}(x_{i})\Big)|&\le 2\omega(g,\delta) + \Big|g\Big(\sum_{i=1}^{d} \lambda _{i} \phi _{q}(x_{i})\Big)-g\Big(\sum_{i=1}^{d} \lambda _{i} \widetilde{L}_{q}(x_{i})\Big)\Big| \\
&\le 2\omega(g,\delta) + \omega(g, \frac{d K C_\phi}{p^{\alpha}}) \\
& \le 2d\omega(g,\frac{1}{n}) + (d K C_\phi +1)\omega(g,\frac{1}{p^{\alpha}}).
\end{aligned}
\label{equationA4}
\end{equation}

By integrating the above estimations, we have
\begin{align*}
\left | f(x)-\sum_{q=0}^{2d} S_g\left ( \sum_{i=1}^{d} \lambda _{i}  \widetilde{L}_{q}\left ( x_{i}  \right )  \right )  \right | &\le (2d+1)\left ( d\omega \left ( g, \frac{1}{n} \right )+2d\omega(g,\frac{1}{n}) + (d K C_\phi +1)\omega(g,\frac{1}{p^{\alpha}}) \right ) \\
&\le d(2d+1)\left ( 3\omega \left ( g, \frac{1}{n} \right ) +(K C_\phi +1)\omega(g,\frac{1}{p^{\alpha}}) \right ) .
\end{align*}
This completes the proof of Theorem \ref{Theorem4}.
\end{proof}

\section*{Appendix B. Results on various parameter settings}
\setcounter{table}{0} 
\renewcommand{\thetable}{B\arabic{table}}
Table \ref{tab:B2} investigate the effect of increasing $gw$ and $gd$ on error reduction. Table \ref{tab:B1} demonstrate the effect of $p$ and changes of relative errors with respect to $hw$ under different $hd$ conditions.

\begin{table}[H]
	\centering
	\caption{The relative $L_2$ errors, along with the $gw$, obtained by K-HOrderDNNs under different $gd$ settings for the problem \eqref{equation2} when $d$=2.}
	\setlength{\tabcolsep}{3pt}
	\resizebox{0.73\columnwidth}{!}{
	\begin{tabular}{cccccc}
		\toprule
		Model & $p$ & \diagbox{$gd$}{$gw$}  & 45    & 120   & 180 \\
		\midrule
		\multirow{15}[30]{*}[7ex]{K-HOrderDNN} & \multirow{3}[6]{*}[1.5ex]{1} & 2     & 5.21E-01 & 4.37E-01 & 7.33E-01\\
		&       & 3     & 4.81E-01 & 3.66E-01 & \textbf{1.37E-01}\\
		&       & 4     & 3.46E-01 & 3.19E-01 & 3.02E-01\\
		\cmidrule(lr){2-6}          & \multirow{3}[6]{*}[1.5ex]{3} & 2     & 1.77E-01 & 1.47E-01 & 9.72E-02\\
		&       & 3     & 1.48E-01 & 6.19E-02 & 6.25E-02\\
		&       & 4     & 9.87E-02 & 5.44E-02 & \textbf{3.06E-02}\\
		\cmidrule(lr){2-6}          & \multirow{3}[6]{*}[1.5ex]{5} & 2     & 7.41E-02 & 1.43E-01 & \textbf{2.48E-02}\\
		&       & 3     & 5.10E-02 & 4.23E-02 & 4.69E-02\\
		&       & 4     & 3.47E-02 & 3.47E-01 & 3.85E-01\\
		\cmidrule(lr){2-6}          & \multirow{3}[6]{*}[1.5ex]{7} & 2     & 2.16E-02 & 1.24E-02 & 2.04E-02\\
		&       & 3     & 3.09E-02 & 1.43E-02 & 8.29E-03\\
		&       & 4     & 3.31E-02 & 1.27E-02 & \textbf{8.11E-03}\\
		\cmidrule(lr){2-6}          & \multirow{3}[6]{*}[1.5ex]{9} & 2     & 2.13E-02 & 1.17E-02 & 1.13E-02\\
		&       & 3     & 9.85E-03 & 1.13E-02 & 1.02E-02\\
		&       & 4     & 2.31E-02 & 8.31E-03 & \textbf{6.04E-03}\\
		\bottomrule
	\end{tabular}%
	}
	\label{tab:B2}%
\end{table}%
\par
\begin{table}[H]
\centering
\setlength{\tabcolsep}{3pt}
\caption{Comparison of the relative $L_2$ errors among PINN, HOrderDNNs, and K-HOrderDNNs on the problem \eqref{equation2} when $d$=2.}
\resizebox{0.73\columnwidth}{!}{
\begin{tabular}{ccccccc}
	\toprule%
	Model & $p$ & \diagbox{$hd$}{$hw$} & 5    & 10   & 25   & 45 \\ \midrule
	\multirow{3}[6]{*}[1.5ex]{PINN} & \multirow{3}[6]{*}[1.5ex]{\textbackslash{}}& 1     & 7.62E-01 & 7.87E-01 & 4.96E-01 & 3.77E-01 \\
	\multicolumn{2}{c}{} & 2     & 7.19E-01 & 7.14E-01 & 4.38E-01 & 2.71E-01 \\
	\multicolumn{2}{c}{} & 3     & 6.85E-01 & 6.36E-01 & 3.80E-01 & \textbf{2.13E-01} \\ 
	\midrule
	\multirow{15}[30]{*}[7ex]{HOrderDNN} & \multirow{3}[6]{*}[1.5ex]{1} & 1     & 6.62E-01 & 6.18E-01 & 4.95E-01 & 3.26E-01 \\
	&       & 2     & 6.70E-01 & 6.26E-01 & 3.64E-01 & 1.77E-01 \\
	&       & 3     & 6.47E-01 & 6.24E-01 & 2.98E-01 & \textbf{1.11E-01} \\
	\cline{2-7}          & \multirow{3}[6]{*}[1.5ex]{3} & 1     & 5.48E-01 & 4.70E-01 & 2.44E-01 & 9.83E-02 \\
	&       & 2     & 5.43E-01 & 3.60E-01 & 1.61E-01 & 5.69E-02 \\
	&       & 3     & 4.96E-01 & 4.26E-01 & 9.94E-02 & \textbf{3.20E-02} \\
	\cline{2-7}          & \multirow{3}[6]{*}[1.5ex]{5} & 1     & 4.74E-01 & 3.22E-01 & 1.08E-01 & 3.85E-02 \\
	&       & 2     & 4.60E-01 & 2.62E-01 & 7.51E-02 & 2.60E-02 \\
	&       & 3     & 4.35E-01 & 2.42E-01 & 5.86E-02 & \textbf{1.71E-02} \\
	\cline{2-7}          & \multirow{3}[6]{*}[1.5ex]{7} & 1     & 3.79E-01 & 2.50E-01 & 7.21E-02 & 2.49E-02 \\
	&       & 2     & 3.76E-01 & 2.46E-01 & 4.56E-02 & 1.63E-02 \\
	&       & 3     & 4.15E-01 & 1.88E-01 & 4.01E-02 & \textbf{1.33E-02} \\
	\cline{2-7}          & \multirow{3}[6]{*}[1.5ex]{9} & 1     & 3.22E-01 & 1.87E-01 & 5.18E-02 & 1.83E-02 \\
	&       & 2     & 3.16E-01 & 1.75E-01 & 3.50E-02 & 1.29E-02 \\
	&       & 3     & 2.94E-01 & 1.18E-01 & 2.93E-02 & \textbf{1.06E-02} \\ \midrule
	\multirow{15}[30]{*}[7ex]{K-HOrderDNN} & \multirow{3}[6]{*}[1.5ex]{1} & 1     & 7.31E-01 & 6.72E-01 & 5.44E-01 & 4.24E-01 \\
	&       & 2     & 7.41E-01 & 9.91E-01 & 5.52E-01 & 3.46E-01 \\
	&       & 3     & 7.62E-01 & 6.87E-01 & 3.25E-01 & \textbf{3.07E-01} \\
	\cline{2-7}          & \multirow{3}[6]{*}[1.5ex]{3} & 1     & 6.03E-01 & 4.49E-01 & 1.29E-01 & 1.29E-01 \\
	&       & 2     & 5.16E-01 & 4.47E-01 & 1.16E-01 & 2.96E-02 \\
	&       & 3     & 4.19E-01 & 2.43E-01 & 7.83E-02 & \textbf{1.50E-02} \\
	\cline{2-7}          & \multirow{3}[6]{*}[1.5ex]{5} & 1     & 2.15E-01 & 2.13E-01 & 4.62E-02 & 3.05E-01 \\
	&       & 2     & 3.08E-01 & 3.35E-01 & 2.39E-02 & 3.23E-01 \\
	&       & 3     & 2.88E-01 & 8.49E-02 & 2.11E-02 & \textbf{9.37E-03} \\
	\cline{2-7}          & \multirow{3}[6]{*}[1.5ex]{7} & 1     & 2.33E-01 & 9.44E-02 & 2.76E-02 & 1.71E-02 \\
	&       & 2     & 1.41E-01 & 4.83E-02 & 1.53E-02 & 6.76E-03 \\
	&       & 3     & 3.21E-01 & 4.90E-02 & 1.41E-02 & \textbf{4.65E-03} \\
	\cline{2-7}          & \multirow{3}[6]{*}[1.5ex]{9} & 1     & 2.07E-01 & 7.57E-02 & 2.21E-02 & 1.25E-02 \\
	&       & 2     & 1.40E-01 & 5.01E-02 & 9.96E-03 & 6.60E-03 \\
	&       & 3     & 1.58E-01 & 3.05E-02 & 9.90E-03 & \textbf{3.97E-03} \\
	\bottomrule
\end{tabular}%
}
\label{tab:B1}%
\end{table}%

\section*{Appendix C. Comparison of KAN and K-HOrderDNN}
\setcounter{table}{0} 
\renewcommand{\thetable}{C\arabic{table}}
\setcounter{figure}{0} 
\renewcommand{\thefigure}{C\arabic{figure}}
In this section, we provide a detailed comparison between KAN and our K-HOrderDNN, both in terms of architecture and performance on 10D high-frequency function fitting  and 2D high-frequency Poisson equation-solving tasks.
\subsection*{C.1 Architectural Comparison}
Ziming Liu et al.\cite{liu2024kan} recently proposed Kolmogorov Arnold Networks (KAN) for function approximation. Both KAN and K-HOrderDNN are inspired by the Kolmogorov-Arnold Representation Theorem(KAT) or KST, but they differ in how they model univariate K-inner and K-outer functions. In KAN, the univariate K-inner and K-outer functions are parametrized as B-spline curves, with learnable coefficients for the local B-spline basis functions. However, the prototype of KAN is relatively simple, limiting its ability to approximate complex functions, especially when smooth splines are involved. To address this, KAN is extended to deeper and wider architectures to enhance its approximation capabilities. In contrast, K-HOrderDNN approximates the univariate K-inner functions using HOrderDNNs and models the univariate K-outer function with a fully connected subnetwork. Moreover, K-HOrderDNN employs fully connected subnetworks with fixed activation functions at the nodes and learnable weights on the edges. In contrast, KAN uses learnable activation functions at the edges (parameterized by splines) while the nodes perform only summation operations.

\subsection*{C.2 High-Dimensional Function Fitting}
We compare the performance of KAN and K-HOrderDNNs on a 10-dimensional high-frequency function fitting problem \eqref{equation3} using the hyperparameters described in Subsection \ref{f4}. KAN, with a structure of [10, 50, 1] and a grid size of 100, performs comparably to K-HOrderDNN when the number of parameters is comparable, as shown in Table \ref{tab:addlabel}. However, the training time of KAN is about 12 times slower than that of K-HOrderDNN.
\begin{table}[htbp]
	\centering
	\caption{The relative $L_2$ errors and the number of parameters obtained by KAN on problem \eqref{equation3} when $d$ = 10.}
	\begin{tabular}{ccc}
		\toprule
		\multirow{2}[4]{*}{Method} & \multicolumn{2}{c}{d=10} \\
		\cmidrule{2-3}          & Params & RLE \\
		\midrule
		KAN   & 9.33E+04 & 3.36E-02 \\
		K-HOrderDNN(9) & 9.56E+04 & 2.78E-02 \\
		\bottomrule
	\end{tabular}%
	\label{tab:addlabel}%
\end{table}%
\subsection*{C.3 2D Poisson Equation Solving}
We compare the performance of KAN and K-HOrderDNN on a 2D high-frequency poisson problem \eqref{equation4}. The hyperparameter configurations for KAN include network structures of [2, 20, 1], [2, 40, 1], [2, 60, 1], and [2, 80, 1], with corresponding grid sizes of 60, 72, 80, and 100, and $\beta = 10,000$. Other hyperparameters follow the settings outlined in Subsection \ref{case2d}. The relative $L_2$ errors and the number of parameters for each configuration are summarized in Table \ref{tab:R5}. Among these configurations, the best performance is achieved with the KAN structure [2, 60, 1] and grid size 60, where the model reaches an relative $L_2$ errors of 5.09E-02 with 2.38E+04 parameters. However, this result is still inferior to that of K-HOrderDNN, which achieves an relative $L_2$ errors of 1.91E-02 with 1.41E+04 parameters (Table \ref{tab:table7}). Additionally, KAN requires about 5 times longer training than K-HOrderDNN. In other configurations, the performance of KAN is significantly worse despite extensive tuning of hyperparameters, including $\beta$. Moreover, as shown in Table \ref{tab:R5}, we observe that the error does not exhibit a clear trend with respect to variations in grid size or network width, thereby making it challenging to determine the optimal parameter values.

\begin{table}[H]
	\centering
	\caption{The relative $L_2$ errors and the number of parameters obtained by KAN on Possion problem (4.4) under different  structure of KAN and grid size.}
	\begin{tabular}{cccccc}
		\toprule
		\multirow{2}[4]{*}{The structure of KAN} & \multirow{2}[4]{*}{} & \multicolumn{4}{c}{Grid size} \\
		\cmidrule{3-6}          &       & \multicolumn{1}{c}{60} & \multicolumn{1}{c}{72} & \multicolumn{1}{c}{80} & \multicolumn{1}{c}{100} \\
		\midrule
		\multirow{2}[1]{*}{[2, 20, 1] } & Params & \multicolumn{1}{c}{7.94E+03} & \multicolumn{1}{c}{9.38E+03} & \multicolumn{1}{c}{1.03E+04} & \multicolumn{1}{c}{1.27E+04} \\
		& RLE   & \multicolumn{1}{c}{1.02E-01} & \multicolumn{1}{c}{4.57E-01} & \multicolumn{1}{c}{8.41E-01} & \multicolumn{1}{c}{8.02E-01} \\
		\multirow{2}[0]{*}{[2, 40, 1] } & Params & 1.59E+04 & 1.88E+04 & 2.07E+04 & 2.55E+04 \\
		& RLE   & 7.98E-02 & 2.69E-01 & 8.77E-01 & 8.95E-01 \\
		\multirow{2}[0]{*}{[2, 60, 1] } & Params & 2.38E+04 & 2.81E+04 & 3.10E+04 & 3.82E+04 \\
		& RLE   & \textbf{5.09E-02} & 2.77E-01 & 8.34E-01 & 8.65E-01 \\
		\multirow{2}[1]{*}{[2, 80, 1] } & Params & 3.18E+04 & 3.75E+04 & 4.14E+04 & 5.10E+04 \\
		& RLE   & 6.25E-02 & 3.85E-01 & 9.01E-01 & 8.87E-01 \\
		\bottomrule
	\end{tabular}%
	\label{tab:R5}%
\end{table}%

\section*{Appendix D. Comparison of TNN and K-HOrderDNN}
\setcounter{table}{0} 
\renewcommand{\thetable}{D\arabic{table}}
\setcounter{figure}{0} 
\renewcommand{\thefigure}{D\arabic{figure}}
\setcounter{equation}{0}
\renewcommand{\theequation}{D\arabic{equation}}
In this section, we provide a detailed comparison between TNN and our K-HOrderDNN in terms of architecture and performance on high-dimensional problems involving both tensor-product-type and non-tensor-product-type source terms.
\subsection*{D.1 Architectural Comparison}
Hehu Xie et al.\cite{wang2024solving} recently introduced the Tensor Neural Network (TNN), which demonstrates great potential for solving high-dimensional problems.
Both K-HOrderDNN and TNN are designed to approximate a high-dimensional function, but they adopt fundamentally different network architectures. TNN approximates this high-dimensional function by leveraging a low-rank tensor structure composed of one-dimensional functions $\phi_i(x_i)$, which comes from $d$ subnetworks with one-dimensional input and multidimensional output, where $d$ denotes the dimensionality of the problem. Mathematically, it is expressed as:  
\begin{equation}
	u(x_1, x_2, \cdots, x_d; \theta) = \sum_{j=1}^p c_j \prod_{i=1}^d \phi_{i,j}(x_i; \theta_i),
\end{equation}
where each term $ \prod_{i=1}^d \phi_{i,j}(x_i; \theta_i)$ represents a "rank-one" component formed by the tensor product of one-dimensional functions.   
In contrast, K-HOrderDNN leverages the Kolmogorov Superposition Theorem (KST), which decomposes multivariate functions into compositions and sums of simpler univariate functions. It approximates the K-inner univariate functions using HOrderDNNs and the K-outer function with a fully connected subnetwork. These components are combined within the KST framework to represent a high-dimensional function.

\subsection*{D.2 A high-dimensional Poisson problem with tensor-product-type source term}
In this experiment, we compare TNN and K-HOrderDNN on a high-dimensional Poisson problem with a tensor-product-type source term, as described in Section 5.1 of \cite{wang2024solving}. Specifically, we consider a $d$-dimensional Poisson problem \eqref{equation4} with the exact solution: 		 
\begin{equation}
	u(x_1, x_2, \cdots, x_d) = \sum_{k=1}^d \sin \left(2 \pi x_k\right) \cdot \prod_{i \neq k}^d \sin \left(\pi x_i\right)
	\label{Req2}%
\end{equation}
with the right-hand side set to $(d+3)\pi^2u$ and a homogeneous Dirichlet boundary condition.
\par
In this example, for K-HOrderDNNs, the hyperparameters are set as follows: for $ d = 5 $, $ hw = gw = 105 $, $ hd = 1 $, and $ gd = 2 $; and for $ d = 10 $, $hw=gw=210$, $ hd = 1 $, and $ gd = 2 $. For TNN, we use the same hyperparameter settings from \cite{wang2024solving}. In each epoch, we resample 8000 training points for $ N_f $ and 2000 training points for $ N_b $.
\par
Table \ref{tab:R6} summarizes the number of parameters, the relative $L_2$  error, and average computation time per 100 steps on GPU for both TNN and K-HOrderDNN for dimension $d = 5$ and $ d = 10$. 	  
As we can see, TNN shows superior accuracy while requiring fewer computational time than K-HOrderDNNs. This result aligns with expectations because TNN achieves efficient computation by leveraging its low-rank property, which reduces high-dimensional integrals in the loss function to one-dimensional integrals. These can then be effectively computed using classical quadrature schemes. In contrast, K-HOrderDNN is based on the PINN framework, where the loss function calculation relies on high-dimensional integrals. Accurately and efficiently computing these integrals is challenging due to the slow convergence of Monte Carlo methods and their reliance on sampling techniques. However, the effectiveness of TNN is limited to the high-dimensional problems where the coefficients and source terms exhibit a tensor product structure.
\begin{table}[H]
	\centering
	\caption{The relative $ L_2 $ errors, number of parameters, and average computation time per 100 steps on GPU for each method on problem \eqref{equation4}, with the exact solution given in equation \eqref{Req2}, are presented for dimensions $ d = 5 $ and $ d = 10$.}
	\begin{tabular}{ccccrrrr}
		\toprule
		\multirow{2}[4]{*}{Method} & \multirow{2}[4]{*}{$p$} & \multicolumn{3}{c}{d=5} & \multicolumn{3}{c}{d=10} \\
		\cmidrule{3-8}          &       & Params & RLE   & \multicolumn{1}{c}{A100} & \multicolumn{1}{c}{Params} & \multicolumn{1}{c}{RLE} & \multicolumn{1}{c}{A100} \\
		\midrule
		\multirow{5}[2]{*}{K-HOrderDNN} & 1     & 2.00E+04 & 2.54E-02 & 3.1s & 9.39E+04 & 6.17E-02 & 8.8s \\
		& 3     & 2.03E+04 & 6.00E-03 & 3.9s & 9.43E+04 & 4.17E-02 & 11.7s \\
		& 5     & 2.05E+04 & 4.59E-03 & 5.8s & 9.47E+04 & 3.21E-02 & 14.2s \\
		& 7     & 2.07E+04 & 4.34E-03 & 7.2s & 9.52E+04 & 9.85E-03 & 17.2s \\
		& 9     & 2.09E+04 & 3.12E-03 & 8.8s & 9.56E+04 & 1.99E-02 & 21.3s \\
		\midrule
		TNN   & \textbackslash{} & 1.27E+05 & 3.45E-07 & 0.99s & 2.55E+05 & 4.84E-07 & 1.88s \\
		\bottomrule
	\end{tabular}%
	\label{tab:R6}%
\end{table}%
\subsection*{D.3 A high-dimensional Poisson problem with non-tensor-product-type source term}
In this experiment, the exact solution \eqref{eq2} is modified as follows:
\begin{equation}
u(x_1, x_2, \cdots, x_d)=\sum_{i=1}^{d-1} \sin \left(16 \pi x_i x_{i+1}\right)
\label{eq3}%
\end{equation}
In this example, for K-HOrderDNNs, the hyperparameters are set as follows: for $ d = 5 $, $ hw = gw = 220 $, $ hd = 1 $, and $ gd = 2 $; and for $ d = 10 $, $hw=gw=315$, $ hd = 1 $, and $ gd = 2 $. In each epoch, we resample 8000 training points for $ N_f $ and 2000 training points for $ N_b $.

It is easy to verify that the source term on problem \eqref{equation4} is not of tensor-product form. The relative $L_2$ errors and the number of parameters for both PINN and K-HOrderDNNs applied to problem \eqref{equation4} are shown in Table \ref{tab:R7}. In this case, TNN cannot work, while both PINN and K-HOrderDNNs remain effective, with K-HOrderDNN achieving lower relative $L_2$ error than PINN. It is worth noting that Hehu Xie et al.\cite{li2024tensor} recently proposed an interpolation method employing a tensor neural network to approximate a non-tensor-product-type high-dimensional function $f(x)$. This approach enables the use of the approximated function to compute high-dimensional integrals of  $f(x)$. However, finding an approximation for a high dimensional function is generally not easier than computing the integrations of a high dimensional function\cite{li2024tensor}. 
\begin{table}[H]
\centering
\caption{The number of parameters and the relative $L_2$ errors obtained by PINN, and KHOrderDNNs for problem \eqref{equation4} with a solution represented by Eq. \eqref{eq3}. Here, for $ d = 5 $, $ hw = gw = 220 $, $ hd = 1 $, and $ gd = 2 $; and for $ d = 10 $, $hw=gw=315$, $ hd = 1 $, and $ gd = 2 $.}
\begin{tabular}{cccccc}
	\toprule
	\multirow{2}[4]{*}{Method} & \multirow{2}[4]{*}{$p$} & \multicolumn{2}{c}{d=5} & \multicolumn{2}{c}{d=10} \\
	\cmidrule{3-6}          &       & Params & RLE   & Params & RLE \\
	\midrule
	PINN  & \textbackslash{} & 1.47E+05 & 3.17E-01 & 3.02E+05 & 9.39E-01 \\
	\multirow{5}[1]{*}{K-HOrderDNN} & 1     & 6.43E+04 & 2.54E-02 & 1.739E+05 & 8.82E-01 \\
	& 3     & 6.47E+04 & 6.90E-02 & 1.745E+05 & 2.04E-01 \\
	& 5     & 6.51E+04 & 4.67E-02 & 1.752E+05 & 1.26E-01 \\
	& 7     & 6.56E+04 & 7.05E-02 & 1.758E+05 & 8.85E-02 \\
	& 9     & 6.60E+04 & 6.07E-02 & 1.764E+05 & 8.85E-02 \\
	\bottomrule
\end{tabular}%
\label{tab:R7}%
\end{table}%

\section*{Appendix E. Proof of Theorem \ref{thm_tanh}}
\label{sec:appendix_a}
\setcounter{thm}{0}
\setcounter{lem}{0}
\setcounter{equation}{0}
\setcounter{definition}{0}
\renewcommand{\thethm}{E\arabic{thm}}
\renewcommand{\thelem}{E\arabic{lem}}
\renewcommand{\theequation}{E\arabic{equation}}
\renewcommand{\thedefinition}{E\arabic{definition}}
We now extend the approximation result to the Tanh activation function $ \sigma_1$ for a dense subclass of continuous functions. For any continuous function $f$, we also use polynomials $L_{q}(x)$ to approximate the K-inner functions $\phi_q(x)$. By Theorem \ref{thm1}, $\phi_q \in Lip_{C_\phi}(\alpha)$ with $C_\phi$ being their common Lipschitz constant and $\alpha = \log_{10}2$. According to Lemma \ref{Jackson}, there exist  polynomials $L_q \in \mathcal{Q}_p (\mathbb{R})$ such that 
\begin{equation}
	\max_{x\in [0,1]}\left|\phi_q(x)-L_q(x)\right|\le  \frac{KC_\phi}{p^{\alpha}} \colon = \Delta,\quad q=0,1,\cdots ,2d.
	\label{delta}
\end{equation}
It is easy to verify that $ L_q(x) \in [-\Delta, 1+\Delta] $ using equation \eqref{delta} and the definition of $\phi_q(x)$.

Before presenting the proof, we first state a variant of Corollary 5.4 from \cite{de2021approximation} (Lemma \ref{lemm3:tanh}), followed by Lemma \ref{tanh_max_eq}, which is used to establish an upper bound for the approximation of the K-inner function.

\begin{lem} Let $\Omega = [0, d]$, and suppose $g : \Omega \to \mathbb{R}$ is a Lipschitz continuous function with Lipschitz constant $C_g$. Then, for $ N \in \mathbb{N} $ with $ N > 5 $, there exists a tanh neural network $ \hat{g}^N $  with two hidden layers: one of width at most $ (N - 1) $ and the other of width at most $ 6N$, such that
	\begin{equation}
		\| g - \hat{g}^N \|_{L^\infty(\Omega)} \leq \frac{7d C_g}{N}.
		\label{lemm3:eq}
	\end{equation}
	\label{lemm3:tanh}
\end{lem}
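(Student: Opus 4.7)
The plan is to realise $\hat g^N$ as a tanh network that approximates a piecewise linear interpolant of $g$, then control the total error by the triangle inequality. Since the statement is labelled a variant of Corollary 5.4 of \cite{de2021approximation}, I would follow that reference's strategy: partition the domain, reduce to approximating a linear spline on this partition, and then build a shallow tanh surrogate for each kink of the spline. First, I would partition $\Omega=[0,d]$ uniformly into $N$ subintervals with nodes $z_i=id/N$ and let $s$ denote the continuous piecewise linear interpolant of $g$ at these nodes. The Lipschitz hypothesis immediately gives $\|g-s\|_{L^\infty(\Omega)}\le\tfrac{dC_g}{2N}$ by a standard estimate on each subinterval (both $g$ and $s$ are $C_g$-Lipschitz and agree at the endpoints).

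Next, I would write $s$ in the ReLU form $s(x)=\alpha_0+\alpha_1 x+\sum_{i=1}^{N-1}\beta_i\,\sigma(x-z_i)$, where $\sigma(t)=\max(t,0)$ and $|\alpha_1|,|\beta_i|\le 2C_g$ (the $\beta_i$ encoding the slope jumps of $s$ at $z_i$), and realise each ReLU piece by a fixed-size tanh subnetwork. The first hidden layer, of width $N-1$, computes the pre-activations $x-z_i$; the second hidden layer, of width $6N$, uses a constant budget of roughly six tanh units per kink to implement the tanh approximation of $\sigma$ from \cite{de2021approximation}, together with a few units for the affine part $\alpha_0+\alpha_1 x$. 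Using the standard construction in which $\sigma$ is approximated with accuracy $O(1/M)$ on a bounded interval by choosing an internal scaling $M$ appropriately large (e.g.\ of order $N^2$), and summing the $N-1$ kinks with $|\beta_i|\le 2C_g$, one obtains $\|s-\hat g^N\|_{L^\infty(\Omega)}\le\tfrac{6dC_g}{N}$. Combining with the spline bound via the triangle inequality yields $\|g-\hat g^N\|_{L^\infty(\Omega)}\le\tfrac{dC_g}{2N}+\tfrac{6dC_g}{N}\le\tfrac{7dC_g}{N}$, as required.

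The main obstacle is the quantitative tanh-approximation-of-ReLU step: one must balance the sharpness parameter $M$ inside each tanh unit against the prefactor $\beta_i$ and the fixed budget of $6N$ second-layer neurons so that the aggregate error across all $N-1$ kinks still fits inside the target constant $7$. The restriction $N>5$ is precisely what allows one to absorb the $O(1)$ overhead per kink and the boundary corrections into the second-layer width $6N$. All other steps—uniform partitioning, Lipschitz interpolation error, ReLU decomposition of $s$, and the final triangle inequality—are routine. Since this bookkeeping is exactly what Corollary 5.4 of \cite{de2021approximation} performs, I would cite and mildly adapt their construction rather than redo the tanh-vs-ReLU estimates from scratch.
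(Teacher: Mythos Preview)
Your approach is sound but more elaborate than the paper's. The paper's proof is just four lines: define $f(x)=g(xd)$ on $[0,1]$, observe that $f$ is Lipschitz with constant $dC_g$, apply the one-dimensional case of Corollary~5.4 of \cite{de2021approximation} directly (that result is stated on $[0,1]$) to obtain a tanh network $\hat f^N$ with the required widths and error $7dC_g/N$, and then set $\hat g^N(t)=\hat f^N(t/d)$. No partitioning, spline interpolation, or tanh-versus-ReLU bookkeeping is carried out explicitly; the rescaling simply pushes the domain length $d$ into the Lipschitz constant, and the cited corollary is used as a black box. Your route---building the piecewise linear interpolant, decomposing it into kinks, and tracking the six-neurons-per-kink budget---is essentially a sketch of how Corollary~5.4 itself is proved, so it gives more insight into where the widths $(N-1,6N)$ and the constant $7$ come from, but it is unnecessary here since you concede at the end that you would cite the same corollary anyway. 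The paper's change-of-variable trick is the more economical move and avoids having to verify that the constants in your intermediate bound $\|s-\hat g^N\|_{L^\infty}\le 6dC_g/N$ actually work out.
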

\begin{proof}
	Define $f(x) = g(xd) $ for $ x \in [0, 1] $. It is straightforward to verify that $ f(x) $ satisfies a Lipschitz condition with constant $ d C_g $. By using the one-dimensional case of Corollary 5.4 in \cite{de2021approximation}, there exists a tanh neural network $ \hat{f}^N $ with two hidden layers, where the layer widths are at most $ (N - 1) $ and $ 6N $, such that
	\begin{equation*}
		\| f(x) - \hat{f}^N(x; \theta) \|_{L^\infty([0,1])} \leq \frac{7d C_g}{N}.
	\end{equation*}
	By  defining \( \hat{g}^N(t; \theta) = \hat{f}^N(t/d; \theta) \), we obtain
	\begin{equation*}
		\| g(t) - \hat{g}^N(t; \theta) \|_{L^\infty([0, d])} \leq \frac{7d C_g}{N}.
	\end{equation*}
\end{proof}
\begin{lem} Let \( g(t) = w \tanh\left( \frac{t + \Delta}{w(1 + 2\Delta)} \right) - t \), where \( t \in [-\Delta, 1 + \Delta] \), \( \Delta > 0 \), and \( w > 0 \). There exists a constant \( w_0 > 0 \) such that for all \( w \geq w_0 \), the maximum absolute value of \( g(t) \) on the interval \( t \in [-\Delta, 1 + \Delta] \) satisfies:
	\begin{equation}
		| g(t)| \le  2\Delta.
		\label{tanh_max_eq}
	\end{equation}
	\label{lemm3:tanh_max}
\end{lem}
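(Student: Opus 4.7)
The plan is to exploit the natural rescaling hidden in the definition of $g$ and then control the deviation of $w\tanh(\cdot/w)$ from the identity via an elementary estimate on $\tanh$. First I would introduce the change of variables $u = (t+\Delta)/(1+2\Delta)$, which sends the interval $t\in[-\Delta,1+\Delta]$ bijectively onto $u\in[0,1]$. Under this substitution the argument of $\tanh$ becomes simply $u/w$, and one can rewrite
\begin{equation*}
g(t) \;=\; w\tanh\!\left(\tfrac{u}{w}\right) - (1+2\Delta)u + \Delta
\;=\; \bigl(w\tanh(u/w) - u\bigr) \;+\; \Delta(1-2u).
\end{equation*}
This decomposition is the key structural observation: the second summand is the affine part that remains in the limit $w\to\infty$, and the first summand is the nonlinear correction that becomes negligible for large $w$.

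Next I would bound the two pieces separately. For $u\in[0,1]$, clearly $|\Delta(1-2u)|\le\Delta$. For the nonlinear correction I would use the elementary inequality $|v-\tanh(v)|\le |v|^3/3$ for all $v\in\mathbb{R}$, which follows from writing $v-\tanh(v)=\int_0^v\tanh^2(s)\,ds$ and using $|\tanh(s)|\le|s|$. Setting $v=u/w$ and multiplying by $w$ gives
\begin{equation*}
\bigl|w\tanh(u/w) - u\bigr| \;\le\; \frac{u^3}{3w^2} \;\le\; \frac{1}{3w^2}.
\end{equation*}

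Combining the two bounds yields $|g(t)|\le \Delta + \tfrac{1}{3w^2}$ for every $t\in[-\Delta,1+\Delta]$. Choosing $w_0 := 1/\sqrt{3\Delta}$ (or any larger value) guarantees $1/(3w^2)\le\Delta$ whenever $w\ge w_0$, and hence $|g(t)|\le 2\Delta$ on the whole interval, as claimed. There is no real obstacle beyond the bookkeeping: the crux is simply recognizing that after the affine change of variable the problem reduces to the one-dimensional estimate $|v-\tanh(v)|\lesssim v^3$, and then verifying that the residual term scales like $w^{-2}$ so the threshold $w_0$ exists and depends only on $\Delta$.
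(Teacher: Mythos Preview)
Your proof is correct and takes a genuinely different route from the paper's argument. The paper computes $g'(t)=\frac{1}{1+2\Delta}\operatorname{sech}^2(\cdot)-1\le 0$, concludes that $g$ is monotone decreasing, and therefore reduces the bound to the two endpoint values $g(-\Delta)=\Delta$ and $g(1+\Delta)=w\tanh(1/w)-(1+\Delta)$; the existence of $w_0$ then follows from the fact that $1-w\tanh(1/w)\to 0$ as $w\to\infty$. Your approach instead linearizes via the change of variable $u=(t+\Delta)/(1+2\Delta)$ and splits $g$ into the affine piece $\Delta(1-2u)$, which is bounded by $\Delta$ trivially, and the nonlinear correction $w\tanh(u/w)-u$, which you control by the cubic estimate $|v-\tanh v|\le|v|^3/3$. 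The payoff of your method is that it is fully quantitative: you obtain the explicit threshold $w_0=1/\sqrt{3\Delta}$, whereas the paper only asserts existence of $w_0$ via a limit argument. The paper's monotonicity route, on the other hand, is slightly more conceptual and makes transparent that the extreme values of $g$ are attained exactly at the endpoints.
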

\begin{proof}
The function $g(t)$ is smooth and differentiable on the interval \( t \in [-\Delta, 1 + \Delta] \), and its derivative is given by:
\begin{equation}
g'(t) = w \cdot \text{sech}^2\left( \frac{t + \Delta}{w(1 + 2\Delta)} \right) \cdot \frac{1}{w(1 + 2\Delta)} - 1 \le 0.
\end{equation}
Since $g(t)$ is monotonically decreasing on this interval, with the value of \( g(-\Delta) = \Delta > 0 \) and \( g(1 + \Delta) = w \tanh\left( \frac{1}{w} \right) - (1 + \Delta) < 0 \), it follows that the maximum value of \( |g(t)| \) occurs at either \( t = -\Delta \) or \( t = 1 + \Delta \). Thus, we have:
\begin{equation}
|g(t)| \leq \max \left( \Delta, 1 + \Delta - w \tanh\left( \frac{1}{w} \right) \right).
\end{equation}
Next, consider the function \( 1 - w \tanh\left( \frac{1}{w} \right) \), which is continuous and monotonically decreasing for \( w > 0 \), with the limit \( 1 - w \tanh\left( \frac{1}{w} \right) \to 0 \) as \( w \to 0^+ \). Therefore, there exists a constant \( w_0 > 0 \) such that for all \( w \geq w_0 \), we have:
\begin{equation}
1 - w \tanh\left( \frac{1}{w} \right) \leq \Delta.
\end{equation}
Consequently, for \( w \geq w_0 \), we obtain:
\begin{equation}
|g(t)| \leq \max \left( \Delta, 2\Delta \right) = 2\Delta.
\end{equation}
\end{proof}
\par
To restrict the values of $L_q(x)$ within the interval $[0, 1]$, we define $\bar{L}_q(x)$ as
\begin{equation}
	\bar{L}_q(x) = w\sigma_1 \left(\frac{L_q(x)+\Delta}{w(1+2\Delta)}\right)= w\sigma_1 \left(\frac{\sum_{j=1}^{p+1} a_{j,q} \psi_j(x)+\Delta}{w(1+2\Delta)}\right) \approx \phi_q(x), \quad q = 0, 1, \dots, 2d.
	\label{req:1}
\end{equation}
which means the subnetwork $h_p$ in K-HOrderDNN is chosen as a high order multi-layer neural network with one nonlinear transformation layer and two hidden layers with $2d+1$ neurons in each hidden layer. To approximate the K-outer function $g$, we utilize a tanh neural network $\hat{g}^N(t; \theta)$ with two hidden layers, where the layer widths are at most $ (N - 1) $ and $ 6N $, as presented in Lemma \ref{lemm3:tanh}.
\par
Thus, we obtain an approximation of the original function $f(x_1, \ldots, x_d)$ as follows:
\begin{equation*}
	f(x_1,x_2,\cdots,x_d)\approx \sum_{q=0}^{2d} \hat{g}^N \left(\sum_{i=1}^{d} \lambda_{i} \bar{L}_{q}(x_i)\right).
\end{equation*}
\par
We now proceed to prove the result.
\begin{proof}
	The approximation error can be bounded as follows by applying the triangle inequality:
	\begin{equation}
		\begin{aligned} 
			\left | f(x)-\sum_{q=0}^{2d} \hat{g}^N \left ( \sum_{i=1}^{d} \lambda _{i}  \bar{L}_{q}\left ( x_{i}  \right )  \right )  \right | & \leqslant  
			\sum_{q=0}^{2d} \left| g\left( \sum_{i=1}^d \lambda_i \phi_q(x_i) \right) - \hat{g}^N\left( \sum_{i=1}^d \lambda_i \phi_q(x_i) \right) \right| \\
			& \quad + \sum_{q=0}^{2d} \left| \hat{g}^N \left( \sum_{i=1}^d \lambda_i \phi_q(x_i) \right) - \hat{g}^N \left( \sum_{i=1}^d \lambda_i \bar{L}_q(x_i) \right) \right|
		\end{aligned}
		\label{rr1}
	\end{equation}
	We now proceed with an individual approximation for each term on the right-hand side.
	\noindent 
	\par
	\textbf{Step 1:} First term of \eqref{rr1}. By using Lemma \ref{lemm3:tanh}, we have 
	\begin{equation}
		\begin{aligned} 
			\sum_{q=0}^{2d} \left| g\left( \sum_{i=1}^d \lambda_i \phi_q(x_i) \right) - \hat{g}^N\left( \sum_{i=1}^d \lambda_i \phi_q(x_i) \right) \right| 
			\leq \frac{7d(2d+1) C_g}{N}
		\end{aligned}
	\end{equation}
	\par
	\noindent \textbf{Step 2:} Second term of \eqref{rr1}. It is easy to verify that \( |\phi_{q}(x) - \bar{L}_{q}(x)| \leq 3 |\phi_{q}(x) - L_{q}(x)| \) by applying the triangle inequality, the definition of \(\bar{L}_{q}(x)\), and Lemma \ref{tanh_max_eq}. Hence, we have
	\begin{align}
		|\sum_{i=1}^{d} \lambda _{i}  \phi _{q}\left ( x_{i}  \right )   - \sum_{i=1}^{d} \lambda _{i}  \bar{L}_{q}\left ( x_{i}  \right  )| \le \frac{3d KC_\phi}{p^{\alpha}}.
		\label{R:equationA3}
	\end{align}
	
	Thus, by using the triangle inequality, the estimates from Lemma \ref{lemm3:tanh} and \eqref{R:equationA3}, we deduce that
	\begin{equation}
		\begin{aligned}
			|\hat{g}\Big(\sum_{i=1}^{d} \lambda _{i} \phi _{q}(x_{i})\Big)-\hat{g}\Big(\sum_{i=1}^{d}\lambda _{i} \bar{L}_{q}(x_{i})\Big)|&\le 2\frac{7d C_g}{N} + \Big|g\Big(\sum_{i=1}^{d} \lambda _{i} \phi _{q}(x_{i})\Big)-g\Big(\sum_{i=1}^{d} \lambda _{i} \bar{L}_{q}(x_{i})\Big)\Big| \\
			&\le \frac{14d C_g}{N} +  \frac{C_g 3d KC_\phi}{p^{\alpha}}\\
			& = d C_g(\frac{14}{N}+\frac{3KC_\phi}{p^{\alpha}}).
		\end{aligned}
		\label{R:equationA4}
	\end{equation}
	\par
	\noindent \textbf{Step 3:} Final error bound. Combining the contributions from the two terms of \eqref{rr1} then proves that
	\begin{equation*}
		\left | f(x)-\sum_{q=0}^{2d} \hat{g}^N\left ( \sum_{i=1}^{d} \lambda _{i}  \bar{L}_{q}\left ( x_{i}  \right )  \right )  \right | 
		\le C_g(2d+1)d (\frac{21}{N}+\frac{3KC_\phi}{p^{\alpha}}).
	\end{equation*}
	This completes the proof.
\end{proof}
\end{document}